\def\bfB{\mathbf{B}}
\DeclareMathOperator{\Mat}{\operatorname{M}}
\DeclareMathOperator{\GL}{\operatorname{GL}}
\DeclareMathOperator{\Ortho}{\operatorname{O}}
\DeclareMathOperator{\id}{\operatorname{id}}
\DeclareMathOperator{\Ker}{\operatorname{Ker}}
\DeclareMathOperator{\im}{\operatorname{Im}}
\DeclareMathOperator{\Diag}{\operatorname{Diag}}
\DeclareMathOperator{\Vect}{\operatorname{span}}
\DeclareMathOperator{\car}{\operatorname{char}}
\DeclareMathOperator{\tr}{\operatorname{tr}}
\DeclareMathOperator{\Tr}{\operatorname{Tr}}
\DeclareMathOperator{\Gal}{\operatorname{Gal}}
\DeclareMathOperator{\rk}{\operatorname{rk}}
\DeclareMathOperator{\codim}{\operatorname{codim}}
\renewcommand{\setminus}{\smallsetminus}
\def\K{\mathbb{K}}
\def\C{\mathbb{C}}
\def\N{\mathbb{N}}
\def\Z{\mathbb{Z}}
\renewcommand{\L}{\mathbb{L}}
\def\calP{\mathcal{P}}
\def\lcro{\mathopen{[\![}}
\def\rcro{\mathclose{]\!]}}
\theoremstyle{definition}
\newtheorem{Def}{Definition}
\theoremstyle{plain}
\newtheorem{theo}{Theorem}
\newtheorem{prop}[theo]{Proposition}
\newtheorem{cor}[theo]{Corollary}
\newtheorem{lemme}[theo]{Lemma}
\newtheorem{claim}{Claim}
\theoremstyle{plain}
\theoremstyle{remark}
\newtheorem{Rems}{Remarks}
\newtheorem{Rem}[Rems]{Remark}
\title{When does a linear map belong to at least one orthogonal or symplectic group?}
\author{Cl\'ement de Seguins Pazzis\footnote{Lyc\'ee Priv\'e Sainte-Genevi\`eve, 2, rue
de l'\'Ecole des Postes, 78029 Versailles Cedex, FRANCE.}
\footnote{e-mail address: dsp.prof@gmail.com}}
\begin{document}

\thispagestyle{plain}
\maketitle

\begin{abstract}
Given an endomorphism $u$ of a finite-dimensional vector space over an arbitrary field $\K$, we give necessary and sufficient conditions
for the existence of a regular quadratic form (respectively, a symplectic form) for which $u$ is orthogonal
(respectively, symplectic). Since a solution to this problem is already known in the case $\car(\K) \neq 2$,
our main contribution lies in the case $\car(\K)=2$. When $\car(\K)=2$, we also give necessary and sufficient conditions
for the existence of a regular symmetric bilinear form for which $u$ is orthogonal.
When $\K$ is finite with characteristic $2$, we give necessary and sufficient conditions for the existence
of an hyperbolic quadratic form (respectively, a regular non-hyperbolic quadratic form, respectively, a regular nonalternate symmetric bilinear form)
for which $u$ is orthogonal.
\end{abstract}

\vskip 2mm
\noindent
\emph{AMS Classification:} 15A21; 15A63; 15B10

\vskip 2mm
\noindent
\emph{Keywords:} canonical forms, Jordan reduction, quadratic forms, symplectic forms, symmetric bilinear forms,
finite fields, fields of characteristic $2$

\section{Introduction}

\subsection{The problem}

In this paper, $\K$ denotes an arbitrary field and $\car(\K)$ is its characteristic.
We choose an algebraic closure $\overline{\K}$ of $\K$.
We use the French convention for integers: $\N$ denotes the set of non-negative integers, and $\N^*:=\N \setminus\{0\}$ the set of positive ones.
Given integers $a$ and $b$, we denote by $\lcro a,b\rcro$ the set of all \emph{integers} $n$ such that $a \leq n \leq b$.

We denote by $\Mat_n(\K)$ the algebra of square matrices with $n$ rows and entries in $\K$.
A matrix $A$ of $\Mat_n(\K)$ is called \textbf{alternate} when it is skew-symmetric with zero diagonal entries,
i.e., $\forall X \in \K^n, \; X^TAX=0$ (of course, when $\car(\K) \neq 2$, the alternate matrices of $\Mat_n(\K)$
are its skew-symmetric matrices). \\
Similarity of two matrices $A$ and $B$ is written $A \sim B$.
For $k \in \N^*$ and $\lambda \in \K$, we denote by $J_k(\lambda) \in \Mat_k(\K)$ the Jordan matrix
of order $k$ with eigenvalue $\lambda$.

Given a monic polynomial $P=x^n-\underset{k=0}{\overset{n-1}{\sum}}a_kx^k \in \K[x]$, we denote by
$$C(P):=\begin{bmatrix}
0 &   & & 0 & a_0 \\
1 & 0 & &   & a_1 \\
0 & \ddots & \ddots & & \vdots \\
\vdots & & & 0 & a_{n-2} \\
0 & & &  1 & a_{n-1}
\end{bmatrix}$$
its \textbf{companion matrix}.

Given a vector space $V$ over $\K$, we denote by $\GL(V)$ the group of linear automorphisms of $V$.

A \textbf{symplectic form} is a non-degenerate alternate form. For such a form $b$ on a vector space $V$,
a \textbf{symplectic morphism} of $(V,b)$ is an automorphism $u$ of $V$ such that
$\forall (x,y)\in V^2, \; b(u(x),u(y))=b(x,y)$.

\vskip 3mm The reduction of special endomorphisms (e.g.\ selfadjoint endomorphisms, orthogonal - or unitary - endomorphisms,
normal endomorphisms) plays an important part in the study of real and complex vector spaces.
The generalization to an arbitrary quadratic or symplectic form, however, is much more difficult
(see the early treatments in \cite{Cushman}, \cite{Cikunov}, \cite{Milnor}, \cite{Williamson1}, \cite{Williamson2}).
A complete classification of selfadjoint, skew-selfadjoint and orthogonal automorphisms
is however known up to the classification of hermitian forms when $\car(\K) \neq 2$ (see \cite{Sergei1} and \cite{Sergei2})

Instead of trying to find canonical forms for special morphisms in this setting, an easier problem
is to find necessary and sufficient conditions for an endomorphism to be selfadjoint, skew-selfadjoint or orthogonal
for at least one regular quadratic form (or for a symplectic form).
The first important result on the topic was obtained by Frobenius \cite{Frobenius}, who proved
that every endomorphism of a finite-dimensional vector space $V$
is selfadjoint for at least one regular symmetric bilinear form on $V$
(to put things differently, every square matrix is the product of two symmetric matrices, one of which is nonsingular).
Later, Stenzel \cite{Stenzel} determined when an endomorphism could be skew-selfadjoint for a regular quadratic form,
or selfadjoint or skew-selfadjoint for a symplectic form: he only tackled the case of complex vector spaces
but his results were later generalized to an arbitrary field \cite{GowLaffey}.

In this paper, we tackle the case of the automorphisms of a finite-dimensional vector space that are
orthogonal (respectively, symplectic) for at least one regular quadratic form (respectively, symplectic form).
In chapter XI of \cite{Gantmacher} and more recently in \cite{HornMerino},
this problem is solved for orthogonal morphisms when the underlying field is $\C$, but the proof generalizes to an arbitrary
algebraically closed field of characteristic not $2$
(this yields the possible Jordan canonical forms of the matrices in the orthogonal group $\Ortho_n(\C)$).
The solution for symplectic morphisms is also known \cite{Gongo} for algebraically closed fields of characteristic not $2$.
The deep results from \cite{Sergei1} yield the solution to both problems for an arbitrary field of characteristic not $2$.

Here, we completely solve the problem for an arbitrary field, possibly of characteristic $2$. Although
the results are already known in the case $\car(\K) \neq 2$, we reprove them along the way
since doing so has a low additional cost.

\begin{Def}
Let $u \in \GL(V)$ for some finite-dimensional vector space $V$ over $\K$.
We say that $u$ is:
\begin{itemize}
\item \textbf{essentially orthogonal} when $u$ is $q$-orthogonal for some regular quadratic form $q$
on $V$, i.e., $\forall x \in V, \; q(u(x))=q(x)$;
\item \textbf{essentially bilin-orthogonal}
when $u$ is an isometry for some regular symmetric bilinear form $b$ on $V$,
i.e., $\forall (x,y)\in V^2, \; b(u(x),u(y))=b(x,y)$;
\item \textbf{essentially symplectic} when $u$ is a symplectic morphism for some
symplectic form $b$ on $V$, i.e., $\forall (x,y)\in V^2, \; b(u(x),u(y))=b(x,y)$.
\end{itemize}
\end{Def}

When $\car(\K) \neq 2$, the essentially orthogonal morphisms are
the essentially bilin-orthogonal ones. When $\car(\K)=2$, the following
implications hold:
$$\text{$u$ essentially orthogonal} \; \Rightarrow \;
\text{$u$ essentially symplectic} \; \Rightarrow \;
\text{$u$ essentially bilin-orthogonal.}$$
Indeed, the polar form of a regular quadratic form is symplectic.

Our main problem may now be stated: determine canonical forms for essentially orthogonal,
essentially bilin-orthogonal, and essentially symplectic morphisms.

We adapt the same definitions to square matrices:
notice then that the set of essentially orthogonal (respectively, essentially bilin-orthogonal, respectively,
essentially symplectic) matrices of $\Mat_n(\K)$ is invariant under similarity,
and we have the following characterizations:
\begin{itemize}
\item a matrix $M \in \GL_n(\K)$ is essentially symplectic if and only if $M^TAM=A$ for some nonsingular alternate matrix $A$;
\item a matrix $M \in \GL_n(\K)$ is essentially bilin-orthogonal if and only if $M^TSM=S$ for some nonsingular symmetric matrix $S$;
\item if $\car(\K)=2$, then $M \in \GL_n(\K)$ is essentially orthogonal if and only if
there exists $A \in \Mat_n(\K)$ such that $M^TAM+A$ is alternate and the alternate matrix $A+A^T$ is nonsingular.
\end{itemize}

Let $M \in \GL_n(\K)$ have one of the above properties.
Then there exists a nonsingular matrix $A \in \GL_n(\K)$ such that $M=A^{-1}(M^{-1})^TA$,
hence $M$ must be similar to $(M^{-1})^T$. However, $(M^{-1})^T$
is itself similar to $M^{-1}$ (see \cite{Taussky}).

\begin{prop}
Let $M \in \GL_n(\K)$ which is either essentially orthogonal, essentially symplectic
or essentially bilin-orthogonal. Then $M \sim M^{-1}$.
\end{prop}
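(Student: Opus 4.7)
The plan is to unify the three cases by producing, in each, a relation of the form $M^TCM=C$ for some invertible matrix $C$, extract from it a similarity $M \sim (M^{-1})^T$, and then invoke the classical theorem of Taussky \cite{Taussky} that every square matrix is similar to its transpose.

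For the essentially symplectic and essentially bilin-orthogonal cases, the characterizations already stated in the excerpt directly yield nonsingular matrices $A$ (alternate) and $S$ (symmetric), respectively, satisfying $M^TCM=C$ with $C \in \{A,S\}$. Rearranging this equation gives $CM=(M^T)^{-1}C$, hence
$$M = C^{-1}(M^T)^{-1}C = C^{-1}(M^{-1})^T C,$$
so $M \sim (M^{-1})^T$ in both cases.

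The one small obstacle is the essentially orthogonal case in characteristic $2$, where we are only given $A \in \Mat_n(\K)$ such that $M^TAM+A$ is alternate and $B:=A+A^T$ is nonsingular. I will reduce this to the previous setting. Since $\car(\K)=2$, any alternate matrix is symmetric, so transposing the identity ``$M^TAM+A$ is alternate'' gives $M^TA^TM+A^T=M^TAM+A$, hence $M^TBM=B$. As $B$ is a nonsingular alternate matrix, the same manipulation as above yields $M \sim (M^{-1})^T$.

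Finally, by Taussky's theorem applied to $M^{-1}$, the matrix $(M^{-1})^T$ is similar to $M^{-1}$. Combining this with $M \sim (M^{-1})^T$ gives $M \sim M^{-1}$, completing the proof. The only non-routine step is the characteristic-$2$ reduction, and even that amounts to a single transposition identity.
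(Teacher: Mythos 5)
Your proof is correct and follows the same route as the paper: in each case extract a nonsingular $C$ with $M^TCM=C$, deduce $M\sim (M^{-1})^T$, and then invoke Taussky's theorem that a matrix is similar to its transpose. The only difference is that you make explicit the reduction of the characteristic-$2$ orthogonal case to the symplectic one (via $B=A+A^T$), which the paper handles implicitly through the earlier remark that the polar form of a regular quadratic form is symplectic.
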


As we shall see, the converse is not true (this is obvious for essentially symplectic
morphisms since symplectic forms exist only in even dimensions).

\subsection{Main results}

Before stating our main theorems, we need a few extra definitions:

\begin{Def}
A polynomial $P(x) \in \K[x]$ has \textbf{valuation $0$} if $P(0) \neq 0$. \\
Given a monic polynomial $P=\underset{k=0}{\overset{n}{\sum}} a_k x^k \in \K[x]$ of degree $n$ and valuation $0$, we define
$P^\#:=\frac{1}{a_0}\,\underset{k=0}{\overset{n}{\sum}} a_{n-k} x^k$ and call it the \textbf{reciprocal polynomial} of $P$. \\
We say that $P$ is a \textbf{palindromial} when $P=P^\#$.
\end{Def}

Remark that when $P=\underset{k=1}{\overset{n}{\prod}}(x-\lambda_k)$, one has $P^\#=\underset{k=1}{\overset{n}{\prod}}\bigl(x-\frac{1}{\lambda_k}\bigr)$.
Moreover, the map $P \mapsto P^\#$ defines an involution on the set
of monic polynomials of $\K[x]$ with valuation $0$ and satisfies $(PQ)^\#=P^\#Q^\#$
for all such polynomials: in particular, it preserves divisibility and irreducibility.

\paragraph{}We now state our main results.

\begin{theo}\label{symplectictheorem}
Let $A \in \GL_n(\K)$.
The following conditions are equivalent:
\begin{enumerate}[(i)]
\item $A$ is essentially symplectic.
\item $A$ is similar to $A^{-1}$ and, for every $k \in \N$ and each one of the eigenvalues $1$ and $-1$, the number of Jordan blocks of size
$2k+1$ associated to $A$ is even.
\item $\forall \lambda \in \overline{\K}\setminus\{0,1,-1\}, \; \forall k \in \N^*, \; \rk(A-\lambda I_n)^k=\rk\bigl(A-\frac{1}{\lambda} I_n\bigr)^k$ and, for every $k \in \N$ and each one of the eigenvalues $1$ and $-1$, the number of Jordan blocks of size $2k+1$ associated to $A$ is even.
\item All the elementary factors of $A$ are palindromials
 and, for every $k \in \N$ and each one of the eigenvalues $1$ and $-1$, the number of Jordan blocks of size $2k+1$ associated to $A$ is even.
\item There are nonsingular matrices $B$ and $C$ such that
$A \sim B \oplus B^{-1} \oplus C$, all the elementary factors of $C$
are palindromials and $C$ contains only even-sized Jordan blocks
for the eigenvalues $1$ and $-1$.
\end{enumerate}
\end{theo}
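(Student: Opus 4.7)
The plan is to establish the cycle of implications (i)$\Rightarrow$(ii)$\Rightarrow$(iii)$\Rightarrow$(iv)$\Rightarrow$(v)$\Rightarrow$(i). The equivalences among (ii)--(v) are bookkeeping about Jordan and rational canonical forms: the rank conditions in (iii) express the matching of Jordan structures at $\lambda$ and $\lambda^{-1}$ for $\lambda\notin\{0,\pm 1\}$; combined with $A\sim A^{-1}$ and the observation that palindromiality of a power $P^k$ of an irreducible polynomial is equivalent to $P=P^\#$, this is exactly (iv). Passage from (iv) to (v) amounts to pairing each non-palindromial elementary divisor $P^k$ with its reciprocal $(P^\#)^k$ (which occurs with equal multiplicity thanks to $A\sim A^{-1}$) into a hyperbolic block $B\oplus B^{-1}$, and collecting the palindromial ones into $C$. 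Two Jordan blocks of the same odd size at $\epsilon\in\{\pm 1\}$ likewise combine into such a pair via $J\sim J^{-1}$, which is why $C$ may be required to contain only even-sized blocks at $\pm 1$.

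For (i)$\Rightarrow$(ii), the relation $A\sim A^{-1}$ is the preceding proposition; the substance is the parity claim at the self-reciprocal eigenvalues $\pm 1$. Working intrinsically with a symplectic $(V,b)$ preserved by $u$, the identity $b(u\cdot,u\cdot)=b$ forces generalized eigenspaces $V_\lambda$ and $V_\mu$ to be mutually $b$-orthogonal whenever $\lambda\mu\ne 1$, so $b$ restricts to a symplectic form on each of $V_1$ and $V_{-1}$. Writing $u=\epsilon\,\id+N$ with $N$ nilpotent on $V_\epsilon$, one studies the filtration $\Ker N\subset\Ker N^2\subset\cdots$: each successive quotient carries an alternating form induced by the pairing $(x,y)\mapsto b(x,N^{j-1}y)$ (with a correction in characteristic $2$ to guarantee vanishing of the diagonal), whose non-degeneracy on odd levels forces an even dimension. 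An inclusion--exclusion on Jordan block sizes then yields the desired parity of the number of odd-sized blocks; equivalently, this recovers the classical fact that the Jordan type of a nilpotent element of $\mathfrak{sp}$ is a partition in which each odd part has even multiplicity.

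For the converse (v)$\Rightarrow$(i) we construct an invariant symplectic form on each summand. For $B\oplus B^{-1}$, we realise it up to similarity as $B\oplus(B^{-1})^T$ on $\K^n\oplus\K^n$ equipped with the standard hyperbolic symplectic form of matrix $\bigl(\begin{smallmatrix}0 & I_n\\ -I_n & 0\end{smallmatrix}\bigr)$; invariance is automatic, and Taussky's similarity $(B^{-1})^T\sim B^{-1}$ brings us back to the stated shape. Each indecomposable cyclic summand of $C$ is modelled by the local $\K$-algebra $R=\K[x]/(P^k)$ for a palindromial $P$; palindromiality makes $x\mapsto x^{-1}$ a well-defined $\K$-algebra involution $\sigma$ of $R$, and for a suitable $\K$-linear form $\ell\colon R\to\K$ the expression
\[ b(u,v):=\ell\bigl(\sigma(u)v\bigr)-\ell\bigl(u\sigma(v)\bigr) \]
is alternating (by commutativity of $R$) and invariant under multiplication by $x$ (because $\sigma(x)=x^{-1}$). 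When $P$ is an irreducible palindromial of degree $\geq 2$, any $\ell$ whose pairing $(u,v)\mapsto\ell(uv)$ is non-degenerate on $R$ turns $b$ into a symplectic form; when $P=x\mp 1$, the evenness $k=2m$ enters decisively---odd $k$ would give an odd-dimensional space, on which a non-degenerate alternating form cannot exist---and an explicit basis computation provides the required $b$ in this case.

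The main obstacle is the characteristic $2$ case. On the local algebra $R=\K[N]/N^{2m}$ the involution $\sigma$ sends $N$ to $N+N^2+\cdots+N^{2m-1}$ (no alternating signs), so the trace-involution construction tends to degenerate and must be replaced by an explicit basis-level computation of the symplectic form on even-size Jordan blocks at $\pm 1$. Dually, the parity argument in (i)$\Rightarrow$(ii) loses the ``$u-u^{-1}$'' skew-adjoint substitute available when $\car(\K)\ne 2$ and has to be reworked with a compensating term on the Jordan filtration. Carrying out these two technical adjustments is the principal additional work beyond the classical theory in characteristic different from $2$.
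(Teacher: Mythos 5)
Your overall architecture matches the paper's (reduce to three building blocks: $B\oplus B^{-1}$, Jordan blocks at $\pm 1$, and $C(P^a)$ for an irreducible palindromial $P$ of degree $\geq 2$), and your treatment of the first two building blocks and of (i)$\Rightarrow$(ii) is in the same spirit as the paper. For $C(P^a)$ your construction is genuinely different: you work directly on the local ring $R=\K[x]/(P^a)$ with the involution $\sigma\colon x\mapsto x^{-1}$ and a transfer form $b(u,v)=\ell(\sigma(u)v)-\ell(u\sigma(v))$, whereas the paper proves the statement over $\overline\K$ and specializes when $\K$ is infinite, and in the finite case builds a hermitian matrix $H$ over $\L=\K[x]/(P)$ for $J_a(1)$ and transfers it to $\K$ via $\Tr_{\K'/\K}$. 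Your one-step construction on $R$ is more uniform and, if it worked as stated, would be shorter.

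It does not quite work as stated, however. The non-degeneracy criterion you invoke — that any $\ell$ making $(u,v)\mapsto\ell(uv)$ non-degenerate turns $b$ into a symplectic form — is false. Non-degeneracy of $b$ requires $\ell$ to be non-zero on the image of $\id-\sigma$ restricted to the socle $\mathrm{soc}(R)=\mathfrak m^{a-1}$, which is a strictly stronger condition than non-vanishing of $\ell$ on the socle itself (the latter is what non-degeneracy of $(u,v)\mapsto\ell(uv)$ amounts to). A concrete counterexample: take $\K=\F_2$, $P=x^2+x+1$, $a=2$, and define $\ell$ on the basis $\{1,x,x^2,x^3\}$ of $R$ by $\ell(1)=\ell(x^2)=\ell(x^3)=0$, $\ell(x)=1$. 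One checks $\ell(P)=1\neq 0$ so $(u,v)\mapsto\ell(uv)$ is non-degenerate, yet $\sigma(P)+P=x^3+1$ and more generally $(\id+\sigma)(\mathfrak m)=\Vect(x^3+1)$, on which $\ell$ vanishes; hence $b(P,\cdot)\equiv 0$ and $b$ is degenerate. The construction is salvageable — one shows $(\id-\sigma)|_{\mathrm{soc}(R)}\neq 0$ because $\sigma$ is non-trivial modulo $\mathfrak m$ when $\deg P\geq 2$, and then any $\ell$ not vanishing on $(\id-\sigma)(\mathrm{soc}(R))$ works, the condition propagating to all $u\neq 0$ since $\sigma(u)R\supset\mathrm{soc}(R)$ — but this is a real argument that your text omits and for which your stated lemma is simply wrong, including in characteristic $2$ with $P$ of degree $\geq 2$, a case your discussion treats as unproblematic. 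A second, milder issue: your characteristic-$2$ ``compensating term'' fixes for both (i)$\Rightarrow$(ii) and the Jordan-block constructions are asserted rather than carried out; in the paper these are precisely the non-trivial points (the short proof of Proposition~\ref{decortiquage}(a) relies crucially on $b$ being alternate, not merely skew, and the explicit alternate matrix for $J_{2n}(1)$ is built entry by entry), so they should not be left as a remark.
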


\begin{theo}\label{bilinorthotheorem}
Let $A \in \GL_n(\K)$ and assume that $\car(\K)=2$.
The following conditions are equivalent:
\begin{enumerate}[(i)]
\item $A$ is essentially bilin-orthogonal.
\item $A$ is similar to $A^{-1}$ and, for every $k \in \N^*$,
the number of Jordan blocks of size $2k+1$ associated to $A$ for the eigenvalue $1$ is even.
\item $\forall \lambda \in \overline{\K}\setminus\{0,1\}, \; \forall k \in \N^*, \; \rk(A-\lambda I_n)^k=\rk\bigl(A-\frac{1}{\lambda} I_n\bigr)^k$ and,
for every $k \in \N^*$, the number of Jordan blocks of size $2k+1$ associated to $A$ for
the eigenvalue $1$ is even.
\item All the elementary factors of $A$ are palindromials and,
for every $k \in \N^*$, the number of Jordan blocks of size $2k+1$ associated to $A$ for the eigenvalue $1$ is even.
\item There are nonsingular matrices $B$ and $C$ such that
$A \sim B \oplus B^{-1} \oplus C$, all the elementary factors of $C$
are palindromials and each Jordan block of $C$
for the eigenvalue $1$ is either even-sized or has size $1$.
\end{enumerate}
\end{theo}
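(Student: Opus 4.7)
The plan is to prove the cycle (i) $\Rightarrow$ (ii) $\Rightarrow$ (iii) $\Rightarrow$ (iv) $\Rightarrow$ (v) $\Rightarrow$ (i). The implications among the structural conditions (ii)–(v) are of a purely rational-canonical-form nature, and the two substantive steps are the construction (v) $\Rightarrow$ (i) and the obstruction result (i) $\Rightarrow$ (ii).

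For the chain (ii) $\Rightarrow$ (iii) $\Rightarrow$ (iv) $\Rightarrow$ (v), I would first observe that $A \sim A^{-1}$ is equivalent to saying that the $\overline{\K}$-Jordan types of $A$ at $\lambda$ and at $\lambda^{-1}$ agree for every $\lambda$, which by the standard rank formula for Jordan types is exactly the condition in (iii). Passing to $\K$, this is equivalent to saying that the multiset of elementary divisors of $A$ is stable under the reciprocation involution $P \mapsto P^\#$, which after grouping non-palindromial divisors into pairs $\{P, P^\#\}$ gives the decomposition in (v): each such pair contributes a block $B \oplus B^{-1}$ (with $B = C(P)$), and the remaining palindromial elementary divisors assemble into $C$. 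The parity condition on odd Jordan blocks for the eigenvalue $1$ is carried along unchanged throughout these manipulations.

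For (v) $\Rightarrow$ (i), I would construct a regular symmetric bilinear form on each summand of the decomposition. For a block of the form $B \oplus B^{-1}$, the hyperbolic form $b\bigl((x,y),(x',y')\bigr)=y^Tx'+x^Ty'$ is symmetric, regular and invariant. For the palindromial block $C$, I would further split off the $s\times s$ identity block $I_s$ coming from the size-$1$ Jordan pieces at eigenvalue $1$: on it, the standard form $b(x,y)=x^Ty$ is preserved by $I_s$ and nondegenerate; on the complementary palindromial block every Jordan piece for eigenvalue $1$ is even-sized, so the parity hypothesis of Theorem~\ref{symplectictheorem} is vacuously satisfied (and eigenvalue $-1$ equals $1$ in characteristic $2$). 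Hence that complement is essentially symplectic, and a symplectic form is in particular a regular alternate, hence symmetric, bilinear form.

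The main obstacle will be (i) $\Rightarrow$ (ii). The similarity $A \sim A^{-1}$ is given by the proposition at the end of Section~1.1, so the only new content is the parity statement. Starting from a nonsingular symmetric $b$ with $A$ an isometry, I would first decompose $V$ along the generalized eigenspaces of $A$; since for $\lambda \neq 1$ in characteristic $2$ one has $\lambda \neq \lambda^{-1}$ only when $\lambda \neq 1$, and $V_\lambda$ is $b$-orthogonal to every $V_\mu$ with $\mu \neq \lambda^{-1}$, the restriction $b|_{V_1}$ is regular and the problem reduces to the unipotent case. On the unipotent part, the key tool is the semilinear map $q_b : V_1 \to \K$, $x \mapsto b(x,x)$, which satisfies $q_b \circ A = q_b$: its kernel $V'$ is an $A$-stable subspace on which $b$ restricts to an alternate form, and a complement of $V'$ corresponds to the size-$1$ Jordan pieces that are not constrained by the parity. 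The heart of the argument will be to show that on $V'$ itself no odd Jordan block of size $\geq 3$ can occur with odd multiplicity; this is where the Jordan-block parity invariant really bites, and I expect to invoke or adapt the analysis used to prove the analogous step of Theorem~\ref{symplectictheorem}, applied to the alternate form $b|_{V'}$ after carefully identifying the Jordan structure induced on $V'$ by that of $A$ on $V_1$.
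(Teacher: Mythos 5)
The equivalences among (ii), (iii), (iv), (v) are handled correctly and match the paper's treatment (the involution $P\mapsto P^\#$ on elementary divisors). The (v) $\Rightarrow$ (i) construction is essentially the same as the paper's reduction to three elementary cases, except that you invoke Theorem~\ref{symplectictheorem} as a black box for the palindromial part rather than proving the elementary Jordan/companion cases directly; this is logically fine provided Theorem~\ref{symplectictheorem} is already in hand, whereas the paper proves all four theorems together from the same building blocks (Sections~\ref{BplusB-1}, \ref{Jordanblock}, \ref{companionsection}).

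The genuine gap is in (i) $\Rightarrow$ (ii). Your plan is to pass to $V':=\Ker q_b$ where $q_b(x)=b(x,x)$, note that $b|_{V'}$ is alternate and $A$-stable, and then run the symplectic parity argument on $V'$. This breaks down at several points. First, $V'$ need not be a $b$-regular subspace, so $b|_{V'}$ is merely alternate, not symplectic, and the obstruction for symplectic isometries (Proposition~\ref{decortiquage}(a)) does not apply to it. Second, the claim that a complement of $V'$ ``corresponds to the size-$1$ Jordan pieces'' has no justification: $\Ker q_b$ is a $\K$-subspace of codimension at most $[\K:\K^2]$ (codimension $0$ or $1$ when $\K$ is perfect), and intersecting a single large Jordan block with a hyperplane can shrink that block rather than isolate a size-$1$ piece. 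Third, even granting a parity statement on $V'$, one must transfer it to the Jordan structure on $V_1$, and the map from Jordan types on $V_1$ to Jordan types on $V'$ is not simple. Already for $u=J_3(1)$ your scheme gives no visible contradiction, whereas the theorem requires one. The paper's short proof avoids all this: it shows directly that, for $k\geq 1$, the bilinear form $(x,y)\mapsto b\bigl(u^k(x),(u-\id)^{2k}(y)\bigr)$ is alternate (using $b(u(z),u(z))+b(z,z)=0$ applied to $z=(u-\id)^{k-1}(x)$), hence has even rank, which equals $\rk(u-\id)^{2k}$; the parity of the odd Jordan block counts for $k\geq 1$ follows at once. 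That rank computation is the key idea missing from your proposal, and the long proof's device of isolating the $b$-regular subspaces $F_k$ on which $q_b$ vanishes (for $k\geq 2$) is the structured version of it — neither coincides with taking $\Ker q_b$ globally.
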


\begin{theo}\label{orthotheoremcar<>2}
Let $A \in \GL_n(\K)$ and assume that $\car(\K) \neq 2$.
The following conditions are equivalent:
\begin{enumerate}[(i)]
\item $A$ is essentially orthogonal.
\item $A$ is similar to $A^{-1}$ and, for every $k \in \N^*$ and each one of the eigenvalues $1$ and $-1$, the number of Jordan blocks of size $2k$ associated to $A$ is even.
\item $\forall \lambda \in \overline{\K}\setminus \{0,1,-1\}, \; \forall k \in \N^*, \; \rk(A-\lambda I_n)^k=\rk\bigl(A-\frac{1}{\lambda} I_n\bigr)^k$ and,
for every $k \in \N^*$ and each one of the eigenvalues $1$ and $-1$, the number of Jordan blocks of size $2k$ associated to $A$ is even.
\item All the elementary factors of $A$ are palindromials and,
for every $k \in \N^*$ and each one of the eigenvalues $1$ and $-1$, the number of Jordan blocks of size $2k$ associated to $A$ is even.
\item There are nonsingular matrices $B$ and $C$ such that
$A \sim B \oplus B^{-1} \oplus C$, all the elementary factors of $C$
are palindromials and $C$ contains only odd-sized Jordan blocks
for the eigenvalues $1$ and $-1$.
\end{enumerate}
\end{theo}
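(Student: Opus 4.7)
The plan is to establish the combinatorial equivalences among (ii), (iii), (iv), and (v) by a direct analysis of the rational canonical form of $A$, and then to close the cycle by proving (i) $\Rightarrow$ (ii) (the only nontrivial necessity beyond the proposition stated earlier) and (v) $\Rightarrow$ (i) (explicit construction of a regular quadratic form). The similarity $A \sim A^{-1}$ in (ii) is already given by that proposition, so the genuine content of the necessity direction lies in the parity constraint on even-sized Jordan blocks at the eigenvalues $\pm 1$.

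The equivalences (ii) $\Leftrightarrow$ (iii) $\Leftrightarrow$ (iv) $\Leftrightarrow$ (v) are purely combinatorial. The rank conditions in (iii) translate, via the standard rank-Jordan dictionary over $\overline{\K}$, into equality of Jordan block counts at $\lambda$ and $1/\lambda$; combined with the parity condition at $\pm 1$, this is exactly $A \sim A^{-1}$ together with that extra information. The polynomial reformulation (iv) follows by observing that the involution $P \mapsto P^\#$ acts on monic irreducible polynomials of valuation $0$ with orbits being either singletons (palindromials) or unordered $^\#$-pairs, so $A \sim A^{-1}$ forces the multiplicities in each pair to agree; (v) simply isolates the non-palindromial contributions into a $B \oplus B^{-1}$ summand, leaving a $C$ whose primary factors are all palindromial and which, by the parity condition, has only odd-sized Jordan blocks at $\pm 1$.

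For (i) $\Rightarrow$ (ii), I would fix a nonsingular symmetric matrix $S$ with $A^T S A = S$, and consider the generalized eigenspace decomposition $V = \bigoplus_\lambda V_\lambda$ over $\overline{\K}$. A short computation using $A^T S = S A^{-1}$ shows that $V_\lambda$ is $S$-orthogonal to $V_\mu$ whenever $\mu \neq 1/\lambda$; in particular the restrictions of $S$ to $V_1$ and to $V_{-1}$ are nondegenerate. On $V_1$ write $A = I + N$ with $N$ nilpotent; since $\car(\K) \neq 2$, the operator $A + I = 2I + N$ is invertible on $V_1$, so the Cayley transform $\sigma := (A - I)(A + I)^{-1} = N(2I + N)^{-1}$ is well defined, and because $A - I$ and $A + I$ commute, $\sigma$ is skew-selfadjoint for the restriction of $S$ to $V_1$. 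Since $(2I + N)^{-1}$ is an invertible polynomial in $N$, $\sigma$ has the same Jordan structure as $N$, and Stenzel's theorem on nilpotent skew-selfadjoint operators (\cite{Stenzel}, \cite{GowLaffey}) then forces the number of Jordan blocks of each even size of $N$, hence of $A$ at the eigenvalue $1$, to be even. The case $\lambda = -1$ is identical after replacing $A$ by $-A$ on $V_{-1}$.

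For (v) $\Rightarrow$ (i), I would construct a regular symmetric form on each summand. For $B \oplus B^{-1}$ the hyperbolic Gram matrix $\begin{pmatrix} 0 & I \\ I & 0 \end{pmatrix}$ is preserved by $\begin{pmatrix} B & 0 \\ 0 & (B^T)^{-1} \end{pmatrix}$, and Taussky's theorem \cite{Taussky} provides $(B^T)^{-1} \sim B^{-1}$ to correct the similarity class. For $C$, I would decompose it further into companion matrices $C(Q^m)$ with $Q$ an irreducible palindromial and into Jordan blocks $J_{2k+1}(\pm 1)$: for each palindromial $Q^m$ one writes down an explicit symmetric Gram matrix built from the relation $Q = Q^\#$, and for each $J_{2k+1}(\pm 1)$ an antidiagonal symmetric form with suitable signs makes $\pm(I + J_{2k+1}(0))$ an isometry. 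The hardest step is the parity claim in (i) $\Rightarrow$ (ii): unlike $A \sim A^{-1}$, it is not automatic from general duality considerations, and the only natural route appears to be the Cayley-transform reduction to Stenzel's classification; by contrast, the constructions for (v) $\Rightarrow$ (i) are standard though they require careful block-by-block verification, and the equivalences among (ii)–(v) are bookkeeping once the action of $P \mapsto P^\#$ on elementary factors is understood.
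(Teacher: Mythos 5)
Your Cayley-transform argument for (i) $\Rightarrow$ (ii) is a genuinely different route from the paper's, and it is correct. The paper, after the same reduction to the unipotent summand, proves the parity constraint directly: it shows that $(x,y)\mapsto b\bigl(u^k(u+\id)(x),(u-\id)^{2k+1}(y)\bigr)$ is alternating and has the same rank as $(u-\id)^{2k+1}$, hence $\rk(u-\id)^{2k+1}$ is even. You instead pass to $\sigma=(A-I)(A+I)^{-1}$, verify $\sigma^\star=-\sigma$, note that $\sigma$ has the same Jordan data as $N$ since $(2I+N)^{-1}$ is a unit power series in $N$, and then quote the Stenzel/Gow--Laffey classification of nilpotent skew-selfadjoint operators. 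Your version is shorter if one is willing to use that classification as a black box; the paper's version is self-contained and, as it notes, is designed so that the same machinery can be reused in the characteristic-$2$ refinements later in the paper (where Cayley transforms are unavailable since $A+I$ is not invertible). Both are legitimate; neither subsumes the other.

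The real gap is in (v) $\Rightarrow$ (i), specifically the treatment of $C(Q^m)$ for $Q$ an irreducible palindromial of degree greater than $1$. You write ``one writes down an explicit symmetric Gram matrix built from the relation $Q=Q^\#$'' and call the constructions ``standard'', but no such explicit matrix is exhibited, and the paper's own handling of this case shows that it is far from routine. Over an infinite $\K$ the paper does not write anything down at all: it reduces to $\overline{\K}$ (where no irreducible palindromial of degree $>1$ survives, so the case is vacuous) by a Zariski-density argument (Proposition~\ref{infiniteextension}). Over a finite $\K$ that descent argument fails (the paper explicitly remarks it knows no direct proof of Proposition~\ref{infiniteextension} for finite fields), and instead one must pass to the quotient field $\L=\K[x]/(P)$, use the involution $\sigma: y\mapsto y^{-1}$ and its fixed field $\K'$, construct an $\L/\K'$-hermitian form $H$ preserved by $J_a(1)$, and transfer it to a $\K$-bilinear form via $\Tr_{\K'/\K}$. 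Nothing in your proposal accounts for this, and the phrase ``explicit Gram matrix built from $Q=Q^\#$'' does not describe a construction that is known to work. Until this case is settled, the sufficiency direction is incomplete.
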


\begin{theo}\label{orthotheoremcar2}
If $\car(\K)=2$, then the essentially orthogonal matrices of $\Mat_n(\K)$
are its essentially symplectic ones.
\end{theo}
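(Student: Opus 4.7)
One direction is immediate: if $A$ is $q$-orthogonal for a regular quadratic form $q$, then $A$ preserves the polar form of $q$, which is symplectic (as noted in the introduction). For the converse, I would exploit the canonical-form description of essentially symplectic matrices given by Theorem~\ref{symplectictheorem}(v): up to similarity (which preserves essential orthogonality), write $A = B \oplus B^{-1} \oplus C$, where all elementary factors of $C$ are palindromials and, since $-1 = 1$ in characteristic $2$, $C$ contains only even-sized Jordan blocks at the eigenvalue $1$. Because the orthogonal direct sum of regular quadratic forms is regular, it suffices to construct a preserved regular quadratic form on each of these three summands and then assemble them.

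On the $B \oplus B^{-1}$ block, after replacing $B^{-1}$ by the similar matrix $(B^T)^{-1}$, take the hyperbolic form $q(x, y) := y^T x$ on $\K^k \oplus \K^k$; its polar form is the standard symplectic pairing and $q$ is manifestly invariant. For the summand $C$, I would further decompose by primary components as $C = C_{\neq 1} \oplus C_1$, where $C_1$ is the eigenvalue-$1$ primary part and $C_{\neq 1}$ gathers the other (palindromial) primary factors; by Theorem~\ref{symplectictheorem}(iv), each of these is again essentially symplectic. On $C_{\neq 1}$, the restricted automorphism $u$ satisfies that $u - I$ is invertible; fixing any preserved symplectic form $b_0$, the formula $q(x) := b_0(x, u(x))$ yields a $u$-invariant quadratic form whose polar form equals $(x, y) \mapsto b_0\bigl((u + u^{-1})\,x,\, y\bigr)$. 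Since $u + u^{-1} = u^{-1}(u - I)^2$ in characteristic $2$, this polar form is non-degenerate.

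The main obstacle is to construct a preserved regular quadratic form on $C_1$, where the action is unipotent with all Jordan blocks of even size. Here the trick $q(x) = b_0(x, u(x))$ fails since $(u-I)^2$ is no longer invertible, so one must argue more directly. I would decompose $C_1$ into indecomposable $A$-stable blocks $J_{2m}(1)$ and treat each separately. For $J_2(1)$, the form $q(x_1, x_2) := x_1^2 + x_1 x_2$ works: one checks $q(x_1 + x_2, x_2) = q(x_1, x_2)$ in characteristic $2$ and that its polar form is non-degenerate. For general $m$, I would parametrize the linear space of quadratic forms on $\K^{2m}$ satisfying $q \circ J_{2m}(1) = q$ by solving the defining linear system in the unknown coefficients, and verify by a careful analysis (e.g.\ identifying a convenient ansatz such as $q(x) = x_1 x_4 + x_2^2 + x_2 x_3 + x_2 x_4$ for $m = 2$) that this space contains a form whose polar form is regular. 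Taking the orthogonal direct sum of all these block-wise forms with those constructed for $B \oplus B^{-1}$ and $C_{\neq 1}$ produces the desired regular quadratic form preserved by $A$, completing the proof.
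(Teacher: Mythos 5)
Your overall reduction mirrors the paper's: use the trivial direction (polar form of a regular quadratic form in characteristic $2$ is symplectic), then decompose an essentially symplectic $A$ via Theorem~\ref{symplectictheorem}(v) and build a preserved regular quadratic form block by block. For $B \oplus B^{-1}$ your $q(x,y)=y^Tx$ is exactly the paper's choice (Section~\ref{BplusB-1}, written as the matrix $C=\begin{bmatrix}0&I\\0&0\end{bmatrix}$). The one genuinely different and quite elegant idea is your treatment of the part $C_{\neq 1}$ on which $u-\id$ is invertible: the formula $q(x):=b_0(x,u(x))$, whose polar form is $b_0\bigl(x,(u+u^{-1})y\bigr)$ with $u+u^{-1}=u^{-1}(u-\id)^2$ invertible, bypasses the paper's explicit construction of a quadratic form on each block $C(P^a)$ via the hermitian trace $\Tr_{\K'/\K}(b(X,X))$ over $\L=\K[x]/(P)$ (Section~\ref{finitecompanion}). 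This is a real simplification, though both routes ultimately presuppose Theorem~\ref{symplectictheorem} to supply the preserved symplectic form $b_0$.

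Where your proposal falls short is the unipotent part $C_1$. For the Jordan block $J_{2m}(1)$ you only give explicit forms for $m=1,2$ and defer the general case to ``a careful analysis'' of the linear system. The paper closes this (Proposition~\ref{jordaneven}) with a short, complete argument: once a preserved symplectic form $b$ on the block is fixed, the condition $q\circ J_{2m}(1)=q$ forces $q(e_{i-1})=b(e_i,e_{i-1})$ for $i\geq 2$ and leaves $q(e_{2m})$ free, so a compatible quadratic form always exists and has polar form $b$, hence is regular. This is a genuine step you would need to fill in; solving the linear system by inspection of small cases does not by itself establish the result for all $m$. Apart from that gap, the argument is correct and would yield Theorem~\ref{orthotheoremcar2}.
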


When $n$ is even, condition (iii) in Theorem \ref{bilinorthotheorem} implies that
the number of Jordan blocks of size $1$ for $A$ is even: this shows that
being essentially bilin-orthogonal is the same as being essentially symplectic
(whereas not every non-degenerate symmetric bilinear form is symplectic).

\paragraph{Structure of the paper:}

In Section \ref{reductions}, we reduce the proofs of Theorems \ref{symplectictheorem} to \ref{orthotheoremcar2} to the following elementary cases:
\begin{itemize}
\item $A$ is similar to $B \oplus B^{-1}$ for some nonsingular matrix $B$ (this is easily
dealt with in Section \ref{BplusB-1});
\item $A$ is \textbf{unipotent} i.e., triangularizable with $1$ as its sole eigenvalue:
see Section \ref{decorsection} for the necessary condition for $A$ to be essentially orthogonal
(respectively, symplectic, respectively, bilin-orthogonal), and Section \ref{Jordanblock} for the sufficient condition;
\item $A$ is the companion matrix of $P^a$ for some integer $a \geq 1$
and some irreducible palindromial $P$ of degree greater than $1$
(see Section \ref{companionsection} for the fact that such a matrix is always essentially orthogonal and essentially
symplectic, hence also essentially bilin-orthogonal). When $\K$ is finite, this involves field extensions and hermitian forms.
\end{itemize}

The last two sections deal with refinements of the above theorems for specific fields of characteristic $2$.
In Section \ref{car2bilinSec}, we determine, when  $\K$ is perfect and $\car(\K)=2$,
the bilin-orthogonal automorphisms $u$ that are orthogonal for at least one nonalternate regular symmetric bilinear form,
i.e., we determine the matrices that are similar to a matrix of the orthogonal group $\text{O}_n(\K)$.
In Section \ref{car2quadSec}, we investigate the essentially orthogonal morphisms when $\K$ is finite and has characteristic $2$.
In that case, there are exactly two equivalence classes of regular quadratic forms on a given
even-dimensional vector space $V$ over $\K$ (namely, the hyperbolic and the non-hyperbolic ones),
and we give necessary and sufficient conditions for an automorphism of $V$ to be orthogonal
for at least one regular quadratic form belonging to a given equivalence class.

\section{Reducing the problem to more elementary ones}\label{reductions}

\subsection{Two basic principles}\label{basicred}

Let $A$ and $B$ be two essentially orthogonal (respectively, essentially symplectic, respectively,
essentially bilin-orthogonal) matrices. Since the orthogonal direct sum of two regular
quadratic forms (respectively, symplectic forms, respectively, symmetric bilinear forms)
is regular, the matrix $A \oplus B:=\begin{bmatrix}
A & 0 \\
0 & B
\end{bmatrix}
$ is essentially orthogonal
(respectively, essentially symplectic, respectively, essentially bilin-orthogonal).

Notice also that if $A$ is an essentially orthogonal (respectively, essentially symplectic,
respectively, essentially bilin-orthogonal) matrix, then its opposite matrix $-A$ is also
essentially orthogonal (respectively, essentially symplectic, respectively,
essentially bilin-orthogonal).

\subsection{When is a nonsingular matrix similar to its inverse?}\label{Asyminverse}

The following characterizations are known but we prove them
as they are crucial to our study:

\begin{prop}
Let $A \in \GL_n(\K)$. The following conditions are then equivalent:
\begin{enumerate}[(i)]
\item $A$ is similar to $A^{-1}$.
\item $\forall \lambda \in \overline{\K} \setminus \{0\}, \; \forall k \in \N^*, \; \rk(A-\lambda I_n)^k=\rk\bigl(A-\frac{1}{\lambda} I_n\bigr)^k$.
\item The elementary factors of $A$ are all palindromials.
\item There are nonsingular matrices $B$ and $C$ such that
$A \sim B \oplus B^{-1} \oplus C$ and all the irreducible monic factors in the minimal polynomial
of $C$ are palindromials.
\end{enumerate}
\end{prop}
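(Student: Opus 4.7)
The plan is to route everything through the rational canonical form data of $A$---its invariant factors $d_1 \mid d_2 \mid \cdots \mid d_r$ and its elementary divisors---and to track how these transform under inversion. The central observation will be that if $A$ is cyclic with minimal polynomial $Q$ (automatically of valuation $0$ since $A$ is invertible), then $A^{-1}$ is cyclic with minimal polynomial $Q^\#$; to see this I will multiply the relation $Q(A)=0$ by $A^{-\deg Q}$ and reinterpret the result as a polynomial relation in $A^{-1}$, the equality of degrees between $Q^\#$ and the minimal polynomial of $A^{-1}$ (cyclic of the same degree as $A$, since $\K[A] = \K[A^{-1}]$) then forcing minimality. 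By primary decomposition this will extend to the non-cyclic case: the multiset of elementary divisors of $A^{-1}$ is the image of that of $A$ under $P \mapsto P^\#$, and since $\#$ is multiplicative and preserves divisibility on monic polynomials of valuation $0$, the invariant factors of $A^{-1}$ will be $d_1^\# \mid d_2^\# \mid \cdots \mid d_r^\#$.

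With this in hand, the equivalence (i) $\Leftrightarrow$ (iii) is immediate: $A \sim A^{-1}$ holds iff they share invariant factors, iff $d_i = d_i^\#$ for every $i$, i.e.\ each $d_i$ is a palindromial. For (i) $\Leftrightarrow$ (ii), I will invoke the standard criterion that two matrices of $\GL_n(\K)$ are similar iff they share the rank sequence $\rk(M - \lambda I_n)^k$ for every $\lambda \in \overline{\K}$ and every $k \in \N^*$; since $A$ commutes with $A^{-1} - \lambda I_n$, the identity $A - \frac{1}{\lambda} I_n = -\frac{1}{\lambda} A (A^{-1} - \lambda I_n)$ valid for $\lambda \neq 0$ yields $\rk(A - \frac{1}{\lambda} I_n)^k = \rk(A^{-1} - \lambda I_n)^k$, so (ii) is exactly the statement that $A$ and $A^{-1}$ share their Jordan structure over $\overline{\K}$.

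For (iv) $\Rightarrow$ (i), the decomposition $A \sim B \oplus B^{-1} \oplus C$ immediately gives $A^{-1} \sim B \oplus B^{-1} \oplus C^{-1}$, so it suffices to check $C \sim C^{-1}$: each elementary divisor of $C$ is a power $P^a$ of an irreducible factor of the minimal polynomial of $C$, which is palindromial by hypothesis, so $(P^a)^\# = (P^\#)^a = P^a$, and $C$ and $C^{-1}$ share all elementary divisors. Conversely, for (iii) $\Rightarrow$ (iv), I will decompose each palindromial invariant factor of $A$ into irreducibles and observe that the irreducibles occurring in each such factor, with multiplicities, are stable under $\#$; consequently the full multiset of elementary divisors of $A$ partitions into a palindromial part (powers $P^a$ with $P$ an irreducible palindromial) and matched pairs $\{Q^a, (Q^\#)^a\}$ with $Q$ irreducible and $Q \neq Q^\#$. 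Taking $C$ to be a direct sum of companion matrices for the palindromial part and $B$ a direct sum of one companion matrix chosen from each matched pair, the central observation supplies the other representatives as the elementary divisors of $B^{-1}$, and comparing elementary divisors yields $A \sim B \oplus B^{-1} \oplus C$.

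The only real obstacle is the central observation about how inversion acts on the canonical form of $A$; once it is established, all four equivalences reduce to routine bookkeeping with invariant factors and elementary divisors.
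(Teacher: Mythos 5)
Your proposal is correct and takes essentially the same route as the paper: the hinge in both is the observation that a cyclic matrix with minimal polynomial $Q$ has inverse cyclic with minimal polynomial $Q^\#$, which drives (i) $\Leftrightarrow$ (iii) via the canonical form data, while (i) $\Leftrightarrow$ (ii) follows from the identity $\rk(A-\tfrac{1}{\lambda}I_n)^k = \rk(A^{-1}-\lambda I_n)^k$ together with the rank-sequence criterion for similarity over $\overline{\K}$, and (iii) $\Leftrightarrow$ (iv) is handled by splitting the irreducible factors of each palindromial into $\#$-fixed ones and $\{Q, Q^\#\}$ pairs. The only cosmetic difference is that you phrase the bookkeeping through invariant factors rather than directly through the elementary divisors as the paper does.
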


\begin{proof}
\begin{itemize}
\item The equivalence between (i) and (ii) is straightforward since
$\rk(A^{-1}-\lambda I_n)^k=\rk A^{-k}\bigl(A-\frac{1}{\lambda} I_n\bigr)^k=\rk\bigl(A-\frac{1}{\lambda} I_n\bigr)^k$
for every $\lambda \in \overline{\K}\setminus\{0\}$ and $k \in \N^*$.
\item Given a monic polynomial $P \in \K[x]$ with valuation $0$,
notice that the companion matrix $C(P)$ has a cyclic inverse with minimal polynomial
$P^\#$, and hence is similar to $C(P^\#)$. As $P \mapsto P^\#$ preserves divisibility,
if the elementary factors of $A$ are $P_1,\dots,P_N$, then the elementary factors of
$A^{-1}$ are $P_1^\#,\dots,P_N^\#$: this proves (i) $\Leftrightarrow$ (iii).
\item Let $B$ be a nonsingular matrix, let $C$ be a square matrix all whose elementary factors
are palindromials, and assume that $A \sim B \oplus B^{-1} \oplus C$.
Then
$$A^{-1} \sim B^{-1} \oplus B \oplus C^{-1} \sim B^{-1} \oplus B \oplus C \sim A$$
by applying (iii) $\Rightarrow$ (i) to $C$. Therefore (iv) $\Rightarrow$ (i).
\item Implication (iii) $\Rightarrow$ (iv) needs to be proved only when
$A$ is a companion matrix. Assume then that $A=C(P)$ for some palindromial $P \in \K[x]$.
Then the involution $Q \mapsto Q^\#$ must permute the irreducible factors of $P$,
therefore we may write
$$P=\prod_{i=1}^p Q_i^{\alpha_i} (Q_i^\#)^{\alpha_i} \prod_{j=1}^q R_j^{\beta_j}$$
where $Q_1,\dots,Q_p,R_1,\dots,R_q$ are irreducible and monic,
$Q_1,\dots,Q_p,Q_1^\#,\dots,Q_p^\#,R_1,\dots,R_q$ are all different,
$R_1,\dots,R_q$ are palindromials, and $\alpha_1,\dots,\alpha_p,\beta_1,\dots,\beta_q$
are positive integers.
It follows that
$$A \sim \underset{i=1}{\overset{p}{\bigoplus}} \,C(Q_i^{\alpha_i})
\,\oplus\, \underset{i=1}{\overset{p}{\bigoplus}} \,C((Q_i^\#)^{\alpha_i})
\,\oplus\, \underset{j=1}{\overset{q}{\bigoplus}} \,C(R_j^{\beta_j}).$$
Setting $B:=\underset{i=1}{\overset{p}{\bigoplus}} \,C(Q_i^{\alpha_i})$
and $C:=\underset{j=1}{\overset{q}{\bigoplus}} \,C(R_j^{\beta_j})$, we then have
$$A \sim B \oplus B^{-1} \oplus C.$$
and all the irreducible monic factors in the minimal polynomial of $C$ are palindromials.
\end{itemize}
\end{proof}

Using the same techniques as in the preceding proof,
the equivalence between statements (ii) to (v) in each one of Theorems \ref{symplectictheorem}, \ref{bilinorthotheorem}
and \ref{orthotheoremcar<>2} is obvious and we shall give no further details about it.
In each of these theorems, it remains to prove only implications
(i) $\Rightarrow$ (ii) and (v) $\Rightarrow$ (i).

\subsection{Matrices of the form $B \oplus B^{-1}$}\label{BplusB-1}

The case of matrices of the form $B \oplus B^{-1}$
is the easiest one:

\begin{prop}
Let $B \in \GL_n(\K)$. Then the matrix $B \oplus B^{-1}$
is both essentially orthogonal and essentially symplectic.
\end{prop}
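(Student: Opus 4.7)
The plan is to exhibit explicit regular quadratic and symplectic forms on $\K^{2n}$ for which a matrix similar to $B \oplus B^{-1}$ is an isometry, using the natural hyperbolic pairing between a vector space and its dual. The first step is a simple reduction: since every square matrix is similar to its transpose, $B^{-1}$ is similar to $(B^T)^{-1}$, hence $B \oplus B^{-1}$ is similar to the block-diagonal matrix $C := B \oplus (B^T)^{-1}$. Both essential orthogonality and essential symplecticity are invariant under similarity, so it suffices to establish the conclusion for $C$.

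On $\K^{2n} = \K^n \times \K^n$ I would consider the quadratic form $q(x,y) := y^T x$ together with the bilinear form $\varphi\bigl((x,y),(x',y')\bigr) := y^T x' - (y')^T x$. The polar form of $q$ has Gram matrix $\begin{bmatrix} 0 & I_n \\ I_n & 0 \end{bmatrix}$, which is non-singular, so $q$ is a regular quadratic form. Similarly $\varphi$ has non-singular Gram matrix $\begin{bmatrix} 0 & -I_n \\ I_n & 0 \end{bmatrix}$ and satisfies $\varphi(v,v) = 0$ for every $v \in \K^{2n}$, hence is symplectic (the alternate property needs a direct check in characteristic $2$, but it is immediate from the formula).

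Writing $u := C$, one has $u(x,y) = (Bx, (B^T)^{-1}y)$, and therefore
$$q\bigl(u(x,y)\bigr) = \bigl((B^T)^{-1}y\bigr)^T (Bx) = y^T B^{-1} B x = y^T x = q(x,y),$$
with an entirely analogous one-line computation showing $\varphi(u(v),u(v')) = \varphi(v,v')$. I expect no serious obstacle — the argument is a direct construction, which is consistent with the author's remark that this is the easiest case in the reduction scheme. The one mildly subtle point is that in characteristic $2$, essential orthogonality is strictly stronger than essential bilin-orthogonality, so one must be careful to preserve the quadratic form $q$ itself and not merely its polar form; the computation above settles this at no extra cost.
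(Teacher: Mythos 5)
Your proof is correct and uses essentially the same construction as the paper: you reduce to $B \oplus (B^T)^{-1}$ via $M \sim M^T$ and then exhibit the standard hyperbolic quadratic form $q(x,y)=y^Tx$ and symplectic form with Gram matrix $\begin{bmatrix}0 & -I_n \\ I_n & 0\end{bmatrix}$, for which the one-line invariance computation works. The only cosmetic difference is that you work with the quadratic form directly in all characteristics, whereas the paper first treats the symmetric Gram matrix $\begin{bmatrix}0 & I_n \\ I_n & 0\end{bmatrix}$ (which settles $\car(\K)\neq 2$) and then separately adjusts for $\car(\K)=2$ using the matrix $\begin{bmatrix}0 & I_n \\ 0 & 0\end{bmatrix}$ — but the quadratic form encoded there is exactly your $y^Tx$.
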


\begin{proof}
A previous remark shows that $B \oplus B^{-1}$ is similar to $A:=B \oplus (B^T)^{-1}$,
so it suffices to show that $A$ is essentially orthogonal and essentially symplectic.
Setting $S:=\begin{bmatrix}
0 & I_n \\
I_n & 0
\end{bmatrix}$ and $K:=\begin{bmatrix}
0 & -I_n \\
I_n & 0
\end{bmatrix}$, we see that $S$ and $K$ are both nonsingular, with $S$ symmetric and $K$ alternate.
A straightforward computation shows that $A^TSA=S$ and $A^TKA=K$,
hence $A$ is essentially symplectic, and it is essentially orthogonal if $\car(\K) \neq 2$. \\
If nevertheless $\car(\K)=2$, set $C:=\begin{bmatrix}
0 & I_n \\
0 & 0
\end{bmatrix}$ and notice that $C+C^T=K$ is nonsingular and $A^TCA+C=0$,
hence $A$ is essentially orthogonal.
\end{proof}

Notice also that the matrix $(1) \in \Mat_1(\K)$ is essentially bilin-orthogonal (any regular
symmetric bilinear form on $\K$ is adapted to it).

\vskip 2mm
Let us now see what remains to be proved of the implication (v) $\Rightarrow$ (i)
in each theorem:
\begin{enumerate}[(a)]
\item We need to prove that for every irreducible palindromial
$P \in \K[x]$ which has no root in $\{1,-1\}$, and every integer $a \geq 1$,
the companion matrix of $P^a$ is both essentially orthogonal and essentially symplectic.
\item We need to prove that, when $\car(\K) \neq 2$, the
Jordan matrix $J_{2k+1}(1)$ is essentially orthogonal for each $k \in \N$
(in which case this is also true of $J_{2k+1}(-1)$ since it is similar to $-J_{2k+1}(1)$).
\item We need to prove that the
Jordan matrix $J_{2k}(1)$ is essentially symplectic for each $k \in \N^*$
(in which case this is also true of $J_{2k}(-1)$), and essentially orthogonal  for each $k \in \N^*$
when $\car(\K)=2$.
\end{enumerate}
Knowing this also yields Theorem \ref{orthotheoremcar2} provided Theorem \ref{symplectictheorem} holds: indeed,
it shows that if $\car(\K)=2$, then $A$ is essentially orthogonal whenever it satisfies property (v) in Theorem \ref{symplectictheorem}.

\subsection{Reducing (i) $\Rightarrow$ (ii) to the unipotent case}\label{redtounipotent}

Here, we show that the implication (i) $\Rightarrow$ (ii)
in Theorems \ref{symplectictheorem} to \ref{orthotheoremcar<>2} needs to be proved only in the case of a unipotent matrix.
We already know that a matrix that is is essentially orthogonal, essentially symplectic
or essentially bilin-orthogonal is similar to its inverse, so we will
not care anymore about this part of condition (ii).

Our starting point is the following basic lemma:

\begin{prop}\label{kerorthoim}
Let $b$ be a regular bilinear form on a finite-dimensional vector space $V$,
and assume that $b$ is symmetric or alternate.
Let $u \in \GL(V)$ be a $b$-isometry, i.e., $\forall (x,y)\in V^2, \; b(u(x),u(y))=b(x,y)$.
Let $P \in \K[x]$ be a monic polynomial with valuation $0$. Then
$\Ker P(u)=(\im P^\#(u))^{\bot_b}$.
\end{prop}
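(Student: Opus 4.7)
The plan is to rephrase everything in terms of the $b$-adjoint, then combine two routine facts: one about how $u$ being a $b$-isometry constrains the adjoint of $P(u)$, and one about the orthogonal of an image being the kernel of the adjoint.

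First, since $b$ is regular and is either symmetric or alternate, every linear endomorphism $T$ of $V$ has a unique adjoint $T^*$ characterized by
\[ \forall (x,y) \in V^2,\quad b(T(x),y)=b(x,T^*(y)); \]
non-degeneracy of $b$ gives existence and uniqueness, and the assumption that $b$ is symmetric or alternate ensures that the two-sided version of the definition is equivalent. The isometry relation $b(u(x),u(y))=b(x,y)$ rewrites as $b(u(x),y)=b(x,u^{-1}(y))$, so $u^*=u^{-1}$.

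Second, I would establish the general identity $\Ker T=(\im T^*)^{\bot_b}$ for every endomorphism $T$ of $V$. Indeed, $x\in(\im T^*)^{\bot_b}$ means $b(x,T^*(y))=0$ for all $y\in V$, which by the defining relation of $T^*$ is $b(T(x),y)=0$ for all $y$, and by non-degeneracy of $b$ this is equivalent to $T(x)=0$. (Whether one reads $\bot_b$ on the left or on the right is immaterial thanks to $b$ being symmetric or alternate.)

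Third, I would compute the adjoint of $P(u)$ and connect it to $P^\#(u)$. Since $*$ is an anti-homomorphism and the powers of $u$ commute,
\[ P(u)^*=\sum_{k=0}^n a_k (u^*)^k=\sum_{k=0}^n a_k u^{-k}=P(u^{-1}). \]
Writing $P=\sum_{k=0}^n a_k x^k$ with $a_0\neq 0$, a direct reindexing gives the identity $P^\#(x)=\frac{1}{a_0}\,x^n P(1/x)$, and therefore $P^\#(u)=\frac{1}{a_0}\,u^n\,P(u^{-1})$. Since $u^n$ is invertible and $a_0\neq 0$, this yields $\im P^\#(u)=\im P(u^{-1})=\im P(u)^*$. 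Combining with the previous step,
\[ \Ker P(u)=(\im P(u)^*)^{\bot_b}=(\im P^\#(u))^{\bot_b}, \]
which is the claim.

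There is no real obstacle here: the argument is essentially a formal translation through the adjoint. The only step demanding a small amount of care is making sure that the adjoint with respect to $b$ is well-defined and behaves as an anti-homomorphism in the alternate case, but this is immediate from non-degeneracy.
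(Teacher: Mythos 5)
Your proof is correct and follows essentially the same route as the paper: both pass to the $b$-adjoint, observe $u^\star=u^{-1}$, rewrite $P(u)^\star=P(u^{-1})$ as $a_0\,P^\#(u)\circ u^{-n}$ (you phrase the same identity as $P^\#(u)=\frac{1}{a_0}u^nP(u^{-1})$) to conclude $\im P(u)^\star=\im P^\#(u)$, and finish with the classical fact $\Ker v=(\im v^\star)^{\bot_b}$.
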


\begin{proof}
The adjoint $u^\star$ of $u$ with respect to $b$ is $u^{-1}$.
Writing $P=\underset{k=0}{\overset{n}{\sum}} a_k x^k$, with $a_n \neq 0$,
and setting $Q:=\underset{k=0}{\overset{n}{\sum}} a_{n-k} x^k$, we find that
$$P(u)^\star=P(u^\star)=P(u^{-1})=Q(u)\circ u^{-n}=a_0\, P^\#(u) \circ u^{-n},$$
therefore $\im P(u)^\star=\im P^\#(u)$ and the claimed result follows from the classical identity
$\Ker v=(\im v^\star)^{\bot_b}$, which holds for every endomorphism $v$ of $V$.
\end{proof}

\begin{cor}
Let $b$ and $u$ be as in the Proposition \ref{kerorthoim}, and let $P$ and $Q$
be monic polynomials with valuation $0$ such that $P^\#$ is prime with $Q$.
Then $\Ker P(u) \bot_b \Ker Q(u)$.
\end{cor}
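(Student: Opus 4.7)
The plan is to reduce the orthogonality statement to an inclusion of subspaces via Proposition \ref{kerorthoim}, and then to derive that inclusion from a Bezout identity. The key observation is that, by the previous proposition, the orthogonal of $\im P^\#(u)$ with respect to $b$ is exactly $\Ker P(u)$; since $b$ is regular on a finite-dimensional space, taking the orthogonal a second time will recover $\im P^\#(u)$. Thus saying $\Ker P(u) \bot_b \Ker Q(u)$ is equivalent to saying $\Ker Q(u) \subseteq \im P^\#(u)$.

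First I would record the biorthogonality principle: for any subspace $W$ of $V$, the regularity of $b$ in finite dimension yields $(W^{\bot_b})^{\bot_b}=W$, simply because $\dim W + \dim W^{\bot_b} = \dim V$. Applying this to $W=\im P^\#(u)$ and combining with Proposition \ref{kerorthoim} gives the identity
\[
(\Ker P(u))^{\bot_b}=\im P^\#(u).
\]

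Next I would exploit the coprimality of $P^\#$ and $Q$ in $\K[x]$. By Bezout, there exist polynomials $U,V \in \K[x]$ with $U P^\# + V Q = 1$. Plugging in $u$ and evaluating at any $x \in \Ker Q(u)$ gives
\[
x = U(u)\,P^\#(u)(x) + V(u)\,Q(u)(x) = P^\#(u)\bigl(U(u)(x)\bigr),
\]
using that $U(u)$ and $P^\#(u)$ commute. Hence $x \in \im P^\#(u)$, which proves the inclusion $\Ker Q(u) \subseteq \im P^\#(u)=(\Ker P(u))^{\bot_b}$, and therefore $\Ker P(u) \bot_b \Ker Q(u)$.

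There is no real obstacle here: the only subtlety is checking that $P^\#$ is well-defined in the first place, which requires both $P$ and $Q$ to have valuation $0$ (so that $P^\#$ makes sense and the statement ``$P^\#$ prime with $Q$'' is meaningful), a hypothesis that is already part of the statement. Everything else is a direct two-step combination of Proposition \ref{kerorthoim} with a standard Bezout argument.
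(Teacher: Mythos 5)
Your proof is correct and takes essentially the same route as the paper: the paper's one-line argument is precisely that $\Ker Q(u)\subset \im P^\#(u)$ by coprimality (Bezout), combined with $\Ker P(u)=(\im P^\#(u))^{\bot_b}$ from Proposition~\ref{kerorthoim}. The only difference is that you insert an unnecessary biorthogonality step to rewrite $\im P^\#(u)$ as $(\Ker P(u))^{\bot_b}$; it suffices to read off directly that every vector of $\Ker P(u)$ is $b$-orthogonal to $\im P^\#(u)\supseteq \Ker Q(u)$.
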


\begin{proof}
Indeed, $\Ker Q(u) \subset \im P^\#(u)$ since $P^\#$ is prime with $Q$.
\end{proof}

With the same assumptions, assume further that $\car(\K) \neq 2$ and split the minimal polynomial
$\mu$ of $u$ as $\mu=R(x)\,(x-1)^p(x+1)^q$ where $R$ has no root in $\{1,-1\}$, hence neither does
$R^\#$. The previous corollary and the kernel decomposition theorem show that
$V=\Ker R(u) \overset{\bot_b}{\oplus} \Ker (u-\id)^p \overset{\bot_b}{\oplus} \Ker(u+\id)^q$,
hence $\Ker (u-\id)^p$ and $\Ker (u+\id)^q$ are both regular $b$-spaces:
we deduce that the restrictions of $u$ to $\Ker (u-\id)^p$ and $\Ker (u+\id)^q$
are both isometries for regular bilinear forms which are
symplectic (respectively, symmetric) if $b$ is symplectic (respectively, symmetric): moreover,
if $u$ is essentially orthogonal, then
its restrictions to $\Ker (u-\id)^p$ and $\Ker (u+\id)^q$ are essentially orthogonal.
This leaves us with only two cases: $u-\id$ is nilpotent or $u+\id$ is nilpotent.
However, in the second case, $(-u)-\id$ is nilpotent hence only the first case
needs to be addressed (see Section \ref{basicred}).

The case $\car(\K)=2$ is handled similarly and even more easily since
only the eigenvalue $1$ needs to be taken into account.

In order to prove implication (i) $\Rightarrow$ (ii)
in Theorems \ref{symplectictheorem} to \ref{orthotheoremcar<>2}, it suffices to prove the following result:

\begin{prop}\label{decortiquage}
Let $b$ be a regular bilinear form on a finite-dimensional vector space $V$, and
$u \in \GL(V)$ be a $b$-isometry, i.e., $\forall (x,y)\in V^2, \; b(u(x),u(y))=b(x,y)$.
Assume that $u-\id_V$ is nilpotent.
\begin{enumerate}[(a)]
\item If $b$ is symplectic, then, for every
$k \in \N$, the number of Jordan blocks of $u$ with size $2k+1$ is even.
\item If $\car(\K)=2$ and $b$ is symmetric, then, for every
$k \in \N^*$, the number of Jordan blocks of $u$ with size $2k+1$ is even.
\item If $\car(\K) \neq 2$ and $b$ is symmetric, then, for every
$k \in \N^*$, the number of Jordan blocks of $u$ with size $2k$ is even.
\end{enumerate}
\end{prop}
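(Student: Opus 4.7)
The plan is to introduce, for each $k \geq 1$, a carefully chosen auxiliary bilinear form supported on $\Ker v^k$ (where $v := u - \id_V$) whose radical is explicit and whose symmetry type forces the relevant Jordan block count to be even. Throughout, set $W_k := \Ker v^k$, and note that since $u = \id_V + v$, the automorphism $u$ acts as the identity on $W_1 = \Ker v$; consequently so does $u^{-1}$, and one derives from $u^\star = u^{-1}$ the relation $v^\star = -u^{-1}v$. The key tool is Proposition \ref{kerorthoim} applied to the palindromial $P = (x-1)^k$, which gives $W_k^{\bot_b} = \im v^k$.

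For each $k \geq 1$ I then consider
$$\gamma_k \colon W_k \times W_k \to \K, \quad (x,y) \mapsto b(x, v^{k-1}(y)).$$
A direct calculation using $W_k^{\bot_b} = \im v^k$, $v^{k-1}(W_k) = W_1 \cap \im v^{k-1}$ and $W_k \cap \im v = v(W_{k+1})$ identifies both radicals of $\gamma_k$ with $W_{k-1} + v(W_{k+1})$. Hence $\gamma_k$ descends to a non-degenerate bilinear form $\bar\gamma_k$ on $Q_k := W_k/(W_{k-1} + v(W_{k+1}))$, and a standard dimension count gives $\dim Q_k = 2d_k - d_{k-1} - d_{k+1} = r_k$, the number of Jordan blocks of $u$ of size exactly $k$. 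Using $v^\star = -u^{-1}v$ and the fact that $v^{k-1}(y) \in W_1$ is fixed by $u^{-(k-1)}$, one checks that $\gamma_k(y,x) = (-1)^{k-1}\gamma_k(x,y)$ when $b$ is symmetric, and $\gamma_k(y,x) = (-1)^k\gamma_k(x,y)$ when $b$ is alternate.

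For case (a) with $b$ alternate, the adjoint formula $b(v^m x, v^m x) = (-1)^m b(x, u^{-m} v^{2m}(x))$ combined with the fact that $v^{2m}(x) \in W_1$ is $u$-fixed reduces this to $(-1)^m \gamma_{2m+1}(x,x)$; since $b$ is alternate, the left-hand side vanishes, so $\gamma_{2m+1}$ is itself alternate in any characteristic, and the non-degeneracy of $\bar\gamma_{2m+1}$ forces $r_{2m+1}$ to be even. For case (c) ($\car(\K) \neq 2$, $b$ symmetric), skew-symmetry of $\gamma_{2k}$ gives $2\gamma_{2k}(x,x) = 0$, hence alternateness and the same parity conclusion for $r_{2k}$.

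Case (b) I handle by reducing to case (a). I define $b'(x,y) := b(v(x), y) + b(x, v(y))$. In characteristic $2$, bilinearity and the symmetry of $b$ imply $b'$ is alternate, and using $v^\star = u^{-1}v$ one rewrites $b'(x,y) = b(x, u^{-1} v^2(y))$, whose left kernel is $(\im v^2)^{\bot_b} = W_2$. Since $u$ is easily checked to be a $b'$-isometry, the induced form $\bar b'$ is a symplectic form on $V/W_2$ and $\bar u$ is a $\bar b'$-symplectic morphism. As passage to $V/W_2$ shifts each Jordan block size down by $2$ (with blocks of size at most $2$ disappearing), applying case (a) to $(\bar u, \bar b')$ yields that $r_{2k+1}$ is even for every $k \geq 1$. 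The main obstacle is establishing the alternateness of $\gamma_{2m+1}$ in characteristic $2$ under the symplectic hypothesis: the trick of rewriting $b(x, v^{2m}(x))$ as $\pm b(v^m x, v^m x)$ via the adjoint and the $u$-fixity of $W_1$ is the key observation that makes the argument work uniformly across all characteristics.
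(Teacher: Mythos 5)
Your proof is correct and takes a genuinely different route from both of the paper's two proofs, though it shares the short proof's central computational trick: the adjoint identity $v^\star=-u^{-1}v$, combined with the fact that $u$ acts as the identity on $\Ker v$, gives $b(v^m x,v^m x)=(-1)^m\,b(x,v^{2m}x)$ for $x\in\Ker v^{2m+1}$. Where the paper's short proof shows that $\rk v^{2k}$ (respectively $\rk v^{2k+1}$) is even by producing an alternate form on all of $V$ whose rank equals it, and then infers block-count parity from rank differences, you construct a form $\gamma_k(x,y)=b(x,v^{k-1}y)$ directly on $W_k=\Ker v^k$, compute both radicals to be $W_{k-1}+v(W_{k+1})$, and descend to a non-degenerate form on a quotient $Q_k$ of dimension exactly $r_k$, the number of Jordan blocks of size $k$; this localizes the parity argument precisely where the block count lives. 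Your treatment of case (b) is also different: the paper settles it by the same direct computation (in characteristic $2$, symmetry of $b$ forces $b(v(y),v(y))=0$), whereas you reduce to case (a) by introducing the alternate form $b'(x,y)=b(v(x),y)+b(x,v(y))$, rewriting it as $b(x,u^{-1}v^2(y))$ to identify its kernel with $W_2$, checking $u$ is a $b'$-isometry, and applying case (a) to the induced symplectic pair $(\bar{u},\bar{b'})$ on $V/W_2$, where every Jordan block size drops by two. This reduction gives a nice structural explanation of why case (b) excludes $k=0$. All the steps you rely on (the radical computation via $W_k^{\bot_b}=\im v^k$, the dimension count $\dim Q_k=r_k$, the $u$-fixity of $W_1$, and the isometry property of $b'$) check out, and the paper's long proof — which instead decomposes $V$ into subspaces $V_{k,i}$ and works with the $F$, $G$, $H$ pieces — is a third, unrelated route.
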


\section{The case of unipotent matrices}

We start by giving two proofs of Proposition \ref{decortiquage}; the first one is short. The second one is substantially longer and may
thus be skipped at first reading; it is, however, unavoidable in order to grasp fully the discussion featured in Section \ref{car2quadSec}.
Let $b$ be a regular bilinear form, symmetric or alternate, on a finite-dimensional vector space $V$, and
$u \in \GL(V)$ be a $b$-isometry such that $u-\id$ is nilpotent.

\subsection{Short proof of Proposition \ref{decortiquage}}

\begin{enumerate}[(i)]
\item Assume that $b$ is symplectic. It then suffices to prove that $\rk (u-\id)^{2k}$ is even for every $k \in \N$. \\
For every $k \in \N$ and every $x \in V$, one has
\begin{align*}
b(u^k(x),(u-\id)^{2k}(x)) & =b\bigl((u^{-1}-\id)^k(u^k(x)),(u-\id)^k(x)\bigr) \\
& = (-1)^k\, b((u-\id)^k(x),(u-\id)^k(x))=0.
\end{align*}
This shows that the bilinear form $(x,y) \mapsto b(u^k(x),(u-\id)^{2k}(y))$ is alternate, hence
its rank is even. However its rank is that of $(u-\id)^{2k}$
since $(x,y) \mapsto b(u^k(x),y)$ is non-degenerate (because $u \in \GL(V)$).

\item Assume now that $b$ is symmetric and $\car(\K)=2$. It suffices to prove that $\rk (u-\id)^{2k}$
is even for every $k \in \N^*$. \\
Let $k \in \N^*$ and $x \in V$, and set $y:=(u-\id)^{k-1}(x)$. With the same computation as in (i),
$$b(u^k(x),(u-\id)^{2k}(x))=b((u-\id)(y),(u-\id)(y))=b(u(y),u(y))+b(y,y)=0.$$
As in (i), this shows that $\rk(u-\id)^{2k}$ is even.

\item Assume that $b$ is symmetric and $\car(\K)\neq 2$. It then suffices to prove that $\rk (u-\id)^{2k+1}$ is even for every $k \in \N$. \\
For every $k \in \N$ and every $x \in V$, setting $y:=(u-\id)^k(x)$, one finds:
\begin{align*}
b(u^k(u+\id)(x),(u-\id)^{2k+1}(x))
& =b((\id-u)^k(u+\id)(x),(u-\id)^{k+1}(x)) \\
& =(-1)^k b((u+\id)(y),(u-\id)(y)) \\
& =(-1)^k \bigl(b(u(y),u(y))-b(y,y)\bigr)=0.
\end{align*}
Setting $c : (x,y) \mapsto b((u^k(u+\id)(x),y)$, we deduce that
$(x,y) \mapsto c(x,(u-\id)^{2k+1}(y))$ is alternate, hence its rank is even.
However this rank is that of $(u-\id)^{2k+1}$ since $c$ is non-degenerate (indeed $b$ is non-degenerate and $u^k\circ (u+\id)$
is an automorphism of $V$ since $\car(\K) \neq 2$ and $u$ is unipotent).
\end{enumerate}

\subsection{Long proof of Proposition \ref{decortiquage}}\label{decorsection}

Set $n:=\dim V$.
In this proof, orthogonality is always considered with respect to $b$ unless specified otherwise. \\
Using the Jordan reduction theorem ``block-wise" yields a decomposition\footnote{For convenience purpose, we use $2n$ instead of $n$ as an upper bound
for the first direct sum.}:
$$V=\underset{k=1}{\overset{2n}{\oplus}} \underset{i=1}{\overset{k}{\oplus}} V_{k,i}$$
where, for every $k \in \lcro 1,n\rcro$, some $V_{k,i}$ might be $\{0\}$ and:
\begin{itemize}
\item for every $i \in \lcro 1,k-1\rcro$, the linear map $u-\id$ induces an isomorphism from $V_{k,i}$ to $V_{k,i+1}$ ;
\item $(u-\id)(x)=0$ for every $x \in V_{k,k}$.
\end{itemize}

Then $\Ker (u-\id)^{k-1} \oplus \underset{i=k}{\overset{2n}{\oplus}} V_{i,i-k+1}=\Ker (u-\id)^k$ for every $k \in \N^*$. For each
$k \in \lcro 1,n\rcro$, set
$$F_k:=V_{2k-1,k} \quad \; \quad G_k:=V_{2k,k} \quad \text{and} \quad H_k=V_{2k,k+1.}$$
The following diagram accounts for the action of $u-\id$ on the various spaces we have just defined.
$$\boxed{F_1} \rightarrow \{0\}$$
$$\boxed{G_1} \overset{\sim}{\rightarrow} \boxed{H_1} \rightarrow \{0\}$$
$$V_{3,1} \overset{\sim}{\rightarrow} \boxed{F_2} \overset{\sim}{\rightarrow} V_{3,3} \rightarrow \{0\}$$
$$\ldots\ldots\ldots\ldots\ldots\ldots\ldots\ldots\ldots\ldots\ldots\ldots\ldots\ldots\ldots\ldots\ldots$$
$$\hskip 2mm V_{2k,1} \overset{\sim}{\rightarrow} \cdots  \overset{\sim}{\rightarrow}V_{2k,k-1}
\overset{\sim}{\rightarrow} \boxed{G_k} \overset{\sim}{\rightarrow} \boxed{H_k}
\overset{\sim}{\rightarrow} V_{2k,k+2}
\overset{\sim}{\rightarrow} \cdots \overset{\sim}{\rightarrow} V_{2k,2k} \rightarrow \{0\} $$
$$\hskip 8mm V_{2k+1,1} \overset{\sim}{\rightarrow} \cdots  \overset{\sim}{\rightarrow}V_{2k+1,k}
\overset{\sim}{\rightarrow} \boxed{F_{k+1}}
\overset{\sim}{\rightarrow} V_{2k+1,k+2}
\overset{\sim}{\rightarrow} \cdots \overset{\sim}{\rightarrow} V_{2k+1,2k+1} \rightarrow \{0\}$$
$$\ldots\ldots\ldots\ldots\ldots\ldots\ldots\ldots\ldots\ldots\ldots\ldots\ldots\ldots\ldots\ldots\ldots\ldots\ldots
\ldots\ldots\ldots\ldots\ldots\ldots\ldots$$
Set
$$F:=\underset{k=1}{\overset{n}{\oplus}} F_k, \quad  G:=\underset{k=1}{\overset{n}{\oplus}} G_k \quad \text{and} \quad
H:=\underset{k=1}{\overset{n}{\oplus}} H_k,$$
and
$$E:=\underset{k=1}{\overset{2n}{\oplus}}  \underset{i=[(k+1)/2]+1}{\overset{k}{\oplus}} V_{k,i}.$$
Notice that $\dim F_k$ (respectively, $\dim G_k=\dim H_k$)
is the number of Jordan blocks of size $2k-1$ (respectively, $2k$) for $u$. \\
Proposition \ref{kerorthoim} yields:
$$\forall k \in \N^*, \; \Ker(u-\id)^k = \Bigl[\im (u-\id)^k\Bigr]^\bot.$$
For every $k \in \lcro 1,n\rcro$ and $i \in \lcro 1,k\rcro$,
we see that $V_{k,i} \subset \Ker (u-\id)^{k+1-i}$ and $V_{k,i} \subset \im (u-\id)^{i-1}$, and it follows that
$$E \,\bot\,\bigl(E \oplus F \oplus G \oplus H).$$
On the other hand, we notice that $\codim_V E=\dim(E \oplus F \oplus G \oplus H)$, therefore
$$E^\bot=E \oplus F \oplus G \oplus H$$
and we deduce that $F \oplus G \oplus H$ is $b$-regular. \\
Using again the relation $\Ker (u-\id)^k \bot \im (u-\id)^k$ for each $k \in \N^*$, we find that
for every $(k,l)\in \lcro 1,n\rcro^2$, $k\leq l$ implies $H_k \bot H_l$, and $k<l$ implies $F_k \bot F_l$ and $H_k \bot G_l$. \\
Finally, we work with
 $W:=F\oplus G \oplus H$ equipped with the (symmetric or alternate) regular bilinear form
 $b_W$ induced by $b$, and we consider the endomorphism $v$ of $W$ that coincides with $u$ on $G$
 and is the identity on $F \oplus H$:
 since $(u-\id)(F \oplus H)$ is included in $E$, and is therefore orthogonal to $W$, we find that
 $v$ is a $b_W$-isometry with $\Ker(v-\id)=F\oplus H$ and $\im (v-\id)=H$.

\begin{claim}
For each $k \in \lcro 1,n\rcro$, the subspaces $F_k$ and $G_k \oplus H_k$ are $b$-regular.
\end{claim}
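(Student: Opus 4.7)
The plan is to deduce both regularity claims from a single auxiliary fact: for every integer $l \geq 1$, the restriction of $b$ induces a non-degenerate pairing between the ``bottom'' piece $V_{l,1}$ and the ``top'' piece $V_{l,l}$ of each Jordan block of size $l$ (writing $N := u - \id$). Once this is in hand, I will derive regularity of $F_k = V_{2k-1,k}$ and of $G_k \oplus H_k$ by writing elements of these subspaces as $N^{k-1}$ applied to ``bottom'' vectors and using the $b$-adjoint formula for $N$ to reduce the pertinent pairings to the auxiliary ones on $V_{2k-1,1} \times V_{2k-1,2k-1}$ and $V_{2k,1} \times V_{2k,2k}$.

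To establish the auxiliary fact, I would first apply Proposition \ref{kerorthoim} with the palindromial $P = x-1$ to obtain $\Ker N = (\im N)^{\bot}$; non-degeneracy of $b$ then supplies a non-degenerate pairing $V/\im N \times \Ker N \to \K$. Using the natural identifications $V/\im N \simeq \bigoplus_l V_{l,1}$ and $\Ker N = \bigoplus_l V_{l,l}$, this pairing takes block-matrix form with blocks $b|_{V_{l,1} \times V_{l',l'}}$ indexed by $l, l'$. The crux is that $V_{l,1} \subset \Ker N^l$ while, for $l' > l$, $V_{l',l'} \subset \im N^{l'-1} \subset \im N^l$; applying Proposition \ref{kerorthoim} to the palindromial $(x-1)^l$ yields $\Ker N^l \bot \im N^l$, so $V_{l,1} \bot V_{l',l'}$ whenever $l' > l$. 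The pairing matrix is therefore block lower-triangular in $l$, and its overall non-degeneracy forces every diagonal block to be non-degenerate---which is the auxiliary fact.

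For regularity of $F_k$, I will write $x = N^{k-1} x_0$ and $y = N^{k-1} y_0$ with $x_0, y_0 \in V_{2k-1,1}$, using that $N^{k-1}$ restricts to an isomorphism $V_{2k-1,1} \to V_{2k-1,k}$. The relation $b(u\cdot, u\cdot) = b$ gives the $b$-adjoint of $N$ as $-u^{-1} N$; combined with $u$ acting as the identity on $V_{2k-1, 2k-1} \subset \Ker N$, a short computation yields $b(x,y) = (-1)^{k-1}\, b(x_0, N^{2k-2} y_0)$. Since $N^{2k-2}: V_{2k-1,1} \to V_{2k-1, 2k-1}$ is an isomorphism, the auxiliary non-degeneracy transfers to $b|_{F_k}$. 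The argument for $G_k \oplus H_k$ will follow the same template: total isotropy of $H_k$ is already in hand (from $H_k \bot H_l$ for $k \leq l$, which includes $k = l$), so it suffices to establish non-degeneracy of the pairing $G_k \times H_k \to \K$; an analogous calculation---writing $g = N^{k-1} g_0$ and $h = N^k h_0$ in terms of $V_{2k,1}$, noting $H_k = N^k V_{2k,1}$---reduces this to the auxiliary pairing on $V_{2k,1} \times V_{2k,2k}$. Combined with total isotropy of $H_k$, this forces $b|_{G_k \oplus H_k}$ to be non-degenerate.

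The main obstacle I anticipate is the auxiliary step, specifically verifying rigorously that the induced pairing on the quotient is block lower-triangular in the chosen Jordan decompositions. This hinges on tracking simultaneously the memberships $V_{l,i} \subset \Ker N^{l-i+1}$ and $V_{l,i} \subset \im N^{i-1}$, and on making sure that the direct-sum identifications $V/\im N \simeq \bigoplus_l V_{l,1}$ and $\Ker N = \bigoplus_l V_{l,l}$ are compatible with the orthogonality structure inherited from $V$.
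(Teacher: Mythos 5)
Your proof is correct and takes a genuinely different route from the paper's. The paper introduces the auxiliary isometry $v$ on $W = F \oplus G \oplus H$, applies Proposition \ref{kerorthoim} to $v$ to obtain $H^\bot = F \oplus H$ inside $W$, and then extracts the regularity of each $F_k$ and $G_k \oplus H_k$ from a dimension count together with the orthogonality relations established just before the claim. You instead prove a sharper structural fact directly in $V$: the pairing $V/\im N \times \Ker N \to \K$ arising from $\Ker N = (\im N)^\bot$ is, with respect to the Jordan decomposition, block lower-triangular (since $V_{l,1} \subset \Ker N^l$, $V_{l',l'} \subset \im N^l$ for $l' > l$, and $\Ker N^l \bot \im N^l$), which forces the restriction of $b$ to $V_{l,1} \times V_{l,l}$ to be non-degenerate for every $l$; you then transport this to $F_k$ (respectively, to the $G_k \times H_k$ pairing) via the adjoint formula $N^\star = -u^{-1}N$ and the fact that $u$ fixes $\Ker N$ pointwise, using the already-known total isotropy of $H_k$ to close the $G_k \oplus H_k$ case. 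Your approach is more explicit, bypasses the auxiliary operator $v$ and the $W$-reduction entirely, and isolates the ``top-bottom'' duality within each Jordan block as an independently useful lemma; it also handles $F_k$ and $G_k \oplus H_k$ by one uniform mechanism rather than two separate arguments. The paper's version is somewhat shorter and its framing with $v$ on $W$ is reused almost verbatim in Section \ref{car2quadSec}, so it pays off later in the paper, but both proofs are sound.
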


\begin{proof}
Notice that $H^\bot=\im (v-\id)^\bot=\Ker (v-\id)=F \oplus H$ (orthogonality is now considered with respect to $b_W$). \\
We deduce that both $F$ and $G \oplus H$ are $b$-regular.
Since $F=F_1 \overset{\bot}{\oplus} F_2 \overset{\bot}{\oplus} \cdots \overset{\bot}{\oplus} F_n$,
it follows that $F_1,\dots,F_n$ are all $b$-regular. \\
Notice then that $H_1$ is orthogonal to $H_1 \oplus \underset{k=2}{\overset{n}{\oplus}} (G_k \oplus H_k)$.
Since $\dim G_1=\dim H_1$, we deduce that the orthogonal subspace of $H_1$ in $H \oplus G$ is $\underset{k=2}{\overset{n}{\oplus}} (G_k \oplus H_k)$,
which shows that both $G_1 \oplus H_1$ and $\underset{k=2}{\overset{n}{\oplus}} (G_k \oplus H_k)$ are $b$-regular.
Continuing by induction, we find that $G_k \oplus H_k$ is $b$-regular for each $k \in \lcro 1,n\rcro$.
\end{proof}

At this point, we may prove Proposition \ref{decortiquage} by distinguishing between three cases.

\begin{enumerate}[(a)]
\item Assume that $b$ is symplectic. Then, for each $k \in \lcro 1,n\rcro$,
the restriction of $b$ to $F_k \times F_k$ is symplectic, which shows that $\dim F_k$ is even.
\item Assume that $\car(\K)=2$ and $b$ is symmetric. Let $k \in \lcro 2,n\rcro$.
The key point is that the quadratic form $x \mapsto b(x,x)$ vanishes on $F_k$.
Indeed, given $x \in F_k$, we may find some $y \in V_{2k-1,k-1}$ such that $x=u(y)-y$, hence
$b(x,x)=b(u(y),u(y))+b(y,y)=0$ since $b$ is skew-symmetric. It follows that $b$ induces a symplectic
form on $F_k$, hence $\dim F_k$ is even.
\item Assume finally that $\car(\K) \neq 2$ and $b$ is symmetric. Let $k \in \lcro 1,n\rcro$
and denote by $v_k$ the endomorphism of $G_k \oplus H_k$ induced by $v$. Set $p:=\dim H_k$.
Then $H_k$ is a totally isotropic subspace for $b$, hence we may find an hyperbolic basis $\bfB$
of $G_k \oplus H_k$ whose first $p$ vectors belong to $H_k$.
Since $H_k=\Ker(v_k-\id)=\im(v_k-\id)$, we find that $\Mat_\bfB(v_k)=\begin{bmatrix}
I_p & A \\
0 & I_p
\end{bmatrix}$ for some $A \in \GL_p(\K)$. Since $\bfB$ is hyperbolic and $v_k$ is $b_{G_k \oplus H_k}$-orthogonal,
a straightforward computation shows that $A$ is skew-symmetric. Since $\car(\K) \neq 2$, this shows that $\dim H_k=p$ is even.
\end{enumerate}
The proof of Proposition \ref{decortiquage} is now complete, and it follows that implication (i) $\Rightarrow$ (ii)
in Theorems \ref{symplectictheorem}, \ref{bilinorthotheorem} and \ref{orthotheoremcar<>2} is proved.

\subsection{Jordan blocks with eigenvalue $1$}\label{Jordanblock}

Our aim here is to prove the following two results, which are already known
in the case $\car(\K) \neq 2$. We reproduce the proof for the sake of completeness
and because the strategy is to be reused later on.

\begin{prop}\label{jordaneven}
Let $n \in \N^*$. Then the Jordan matrix $J_{2n}(1)$ is essentially symplectic. If $\car(\K)=2$, then $J_{2n}(1)$ is also essentially orthogonal.
\end{prop}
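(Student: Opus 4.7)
The plan is to produce the required forms explicitly. Identify $\K^{2n}$ with $R := \K[t]/(t^{2n})$ via $e_i \leftrightarrow t^{i-1}$, so that $J := J_{2n}(1)$ corresponds to multiplication by the unit $u := 1+t$; in this basis, $(Jx)_i = x_i + x_{i+1}$ (with the convention $x_{2n+1}=0$). A key observation is that the substitution $t \mapsto -t\,(1+t)^{-1}$ (formally $-t+t^2-t^3+\cdots$ modulo $t^{2n}$) defines an involutive $\K$-algebra automorphism $\sigma$ of $R$ satisfying $\sigma(u) = u^{-1}$. Letting $\operatorname{res}(f) := [t^{2n-1}]\,f$, the bilinear forms $b_w(f,g) := \operatorname{res}(f\,w\,\sigma(g))$, parameterized by $w \in R^\times$, are nondegenerate and preserved by $J$ (invariance uses $\sigma(u)=u^{-1}$ and the commutativity of $R$). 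To establish the essentially symplectic claim, I would exhibit a $w$ producing an alternate $b_w$, or equivalently solve the linear recurrence $b(Je_i,Je_j)=b(e_i,e_j)$ together with the alternate conditions $b(e_j,e_i)=-b(e_i,e_j)$, $b(e_i,e_i)=0$ and the boundary constraints $b(e_{2n},e_k)=0$ for $k\geq 2$ (coming from $Je_{2n}=e_{2n}$). The evenness of the block size ensures that this system admits a nondegenerate solution; exhibiting such a solution is the main technical point.

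Suppose now that $\car(\K)=2$, and let $b$ be the alternate invariant form just produced. I aim to find a regular quadratic form $q$ with polar form $b$ (regularity is automatic since $b$ is symplectic) satisfying $q\circ J = q$. Fix $Q(x) := \sum_{i<j} b(e_i,e_j)\,x_i x_j$; this has polar $b$, and any candidate can be written as $q(x) = Q(x) + \sum_i c_i\,x_i^2$ for some $c_i\in\K$. Since $J$ has entries in $\{0,1\}$ and $\car(\K)=2$, squaring gives $(Jx)_i^2 = \sum_j J_{ij}\,x_j^2$, hence
\[
 \sum_i c_i\,(Jx)_i^2 - \sum_i c_i\,x_i^2 = \sum_j x_j^2\,\bigl((J^T-I)c\bigr)_j.
\]
Moreover $J$ preserves $b$, so the polar form of $Q\circ J - Q$ vanishes, and $Q(Jx) - Q(x) = \sum_j r_j\,x_j^2$ for certain $r_j\in\K$. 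Thus $q\circ J = q$ becomes the linear system $(J^T-I)c = r$. Now $J^T-I$ is a nilpotent Jordan block that sends $e_i$ to $e_{i+1}$ (with $e_{2n+1}=0$); its image is $\{y : y_1=0\}$, so solvability is equivalent to $r_1=0$. An $x_1^2$ contribution to $Q(Jx) - Q(x)$ would require two factors $(Jx)_i$ and $(Jx)_j$ with $i<j$ in some summand $b(e_i,e_j)\,(Jx)_i(Jx)_j$ to both involve $x_1$; but $x_1$ appears in $(Jx)_k = x_k + x_{k+1}$ only for $k=1$, forcing $i=j=1$ and contradicting $i<j$. Hence $r_1=0$ and the system is solvable, yielding the required $q$.

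The main obstacle is the first paragraph: producing a concrete nondegenerate alternate invariant form $b$ on the even-sized Jordan block with eigenvalue~$1$. Once $b$ is available, the characteristic~$2$ upgrade to an orthogonal form follows from the purely linear-algebraic argument given above.
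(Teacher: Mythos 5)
Your second paragraph (the upgrade to a quadratic form in characteristic $2$) is correct and is in spirit the paper's own argument: since $J$ preserves the polar form $b$, the quadratic polynomial $q\circ J-q$ has zero polar form and is therefore diagonal, $\sum_j d_j x_j^2$; the resulting linear system for the coefficients $c_i$ is solvable because the potential obstruction at the $x_1^2$ term vanishes, exactly as you check. (The paper phrases this more tersely: $q$ is determined by $q(e_1),\dots,q(e_{2n})$, and the orthogonality condition reads $q(e_{i-1})=-b(e_i,e_{i-1})$ for $i\ge 2$, which leaves $q(e_{2n})$ free and imposes no constraint at all.) Your residue-form/involution framework for the symplectic part is elegant and the verifications you do carry out ($\sigma$ is involutive, $\sigma(u)=u^{-1}$, each $b_w$ is nondegenerate and $J$-invariant) are fine.

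The genuine gap is exactly where you flag it. The assertion ``the evenness of the block size ensures that this system admits a nondegenerate solution'' is the content of the proposition, not a step of the proof: you have a $2n$-parameter family of invariant bilinear forms $b_w$, but you have not exhibited any unit $w$ for which $b_w$ is alternate, nor proved that one exists, and the reformulation as a linear recurrence is likewise left unsolved. The paper fills this by writing down an explicit alternate nonsingular $A$ with $J_{2n}(1)^TAJ_{2n}(1)=A$: zero below the anti-diagonal, anti-diagonal entries $(-1)^i$, zero in the lower-right $n\times n$ block, and the remaining entries produced by a two-sided recurrence $a_{i,j}=-a_{i,j-1}-a_{i+1,j-1}$ (and its mirror); skew-symmetry then follows from a symmetry of the construction together with $(-1)^{2n+1-i}=-(-1)^i$, and nonsingularity from the anti-triangular shape. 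Until you produce such a $w$ (or an equivalent explicit form), the essentially-symplectic claim is not established, and with it the characteristic-$2$ upgrade, which takes the symplectic $b$ as input. A minor additional point: with $e_i\leftrightarrow t^{i-1}$ and $(Jx)_i=x_i+x_{i+1}$, multiplication by $1+t$ is $J^T$, not $J$; this is harmless since essential symplecticity is a similarity invariant, but the bookkeeping should be fixed.
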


\begin{prop}\label{jordanodd}
Assume that $\car(\K) \neq 2$.
Let $n \in \N$. Then the Jordan matrix $J_{2n+1}(1)$ is essentially orthogonal.
\end{prop}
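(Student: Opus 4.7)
The plan is to exploit the Cayley transform, which in characteristic different from $2$ furnishes a bijection between essentially orthogonal unipotent matrices and skew-selfadjoint nilpotent ones, and then to reduce the problem to a concrete construction on a single nilpotent Jordan block of size $2n+1$.

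Write $u := J_{2n+1}(1) = I_{2n+1} + N$ with $N$ the standard nilpotent shift. Since $\car(\K) \neq 2$, the matrix $u + I_{2n+1} = 2I_{2n+1} + N$ is invertible (all of its eigenvalues equal $2$), so the Cayley transform
$$v := (u - I_{2n+1})(u + I_{2n+1})^{-1} = N\,(2I_{2n+1} + N)^{-1}$$
is a well-defined polynomial in $N$ whose expansion starts as $\frac{1}{2}N - \frac{1}{4}N^2 + \cdots$. Consequently $v^k$ has leading term $2^{-k}N^k$ for every $k \geq 1$, so $v$ is nilpotent of index exactly $2n+1$, hence similar to $J_{2n+1}(0)$. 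A direct manipulation of the reciprocal identity $u = (I_{2n+1}+v)(I_{2n+1}-v)^{-1}$ shows that, for any regular symmetric bilinear form $b$, the automorphism $u$ is $b$-orthogonal if and only if $v$ is $b$-skew-adjoint. Since being essentially orthogonal is invariant under similarity, it therefore suffices to exhibit a regular symmetric bilinear form for which $J_{2n+1}(0)$ is skew-selfadjoint.

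To this end, fix a basis $(e_1, \ldots, e_{2n+1})$ in which $J_{2n+1}(0)$ acts as the shift $e_i \mapsto e_{i+1}$ (with the convention $e_{2n+2} := 0$), and define a bilinear form $b$ by the Gram matrix $S_{i,j} := (-1)^{i-1}$ if $i+j = 2n+2$, and $S_{i,j} := 0$ otherwise. This matrix is a signed antidiagonal, hence nonsingular. Its symmetry amounts to the identity $(-1)^{i-1} = (-1)^{j-1}$ whenever $i+j = 2n+2$, which holds because the integer $2n+2$ being even forces $i$ and $j$ to share the same parity on the support. A one-line verification on basis vectors shows that
$$b(J_{2n+1}(0)\,e_i,\, e_j) + b(e_i,\, J_{2n+1}(0)\,e_j) = \bigl((-1)^i + (-1)^{i-1}\bigr)\,\delta_{i+j,\,2n+1} = 0,$$
completing the argument.

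The main obstacle is establishing the two ``translation'' properties of the Cayley transform, namely that it preserves the Jordan type of a single unipotent block and that it exchanges orthogonality with skew-adjointness; both reduce to routine polynomial calculations in $N$ (or $v$). Once these are in place, the parity condition selecting \emph{odd}-sized Jordan blocks is precisely what makes the Gram matrix symmetric rather than alternate, and this pinpoints why the analogous construction for even-sized blocks produces a symplectic form instead (cf.\ Proposition \ref{jordaneven}).
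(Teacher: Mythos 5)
Your proof is correct, and it takes a genuinely different route from the paper. The paper proves the proposition by an explicit doubly-inductive construction of a nonsingular symmetric matrix $A$ satisfying $J_{2n+1}(1)^TAJ_{2n+1}(1)=A$: the entries are seeded on the anti-diagonal, zeroed out below it and on a staircase above it, and then filled in row by row and column by column using the isometry recurrence $a_{i,j}=-a_{i,j-1}-a_{i+1,j-1}$, with a special row and column of values $\pm\frac{1}{2}$ inserted around index $n+1$ to force symmetry. This is concrete but demands careful bookkeeping, and it is deliberately designed to mirror the analogous construction for $J_{2n}(1)$ in Proposition \ref{jordaneven}, so that the same recipe can later be adapted to hermitian matrices in Lemma \ref{existhermitian} and to characteristic $2$ in Lemmas \ref{pairoddjordan1} and \ref{pairoddjordan2}.

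You instead use the Cayley transform $v=(u-I)(u+I)^{-1}$, which is available precisely because $\car(\K)\neq 2$ (so $u+I=2I+N$ is invertible) and because $u$ is unipotent (so $I-v$ is automatically invertible when going back). The translation ``$u$ is $b$-orthogonal $\iff$ $v$ is $b$-skew-adjoint'' is indeed a short computation: if $V^TS=-SV$ then $(I+V)^TS(I+V)=S(I-V^2)$ and $((I-V)^T)^{-1}=S(I+V)^{-1}S^{-1}$, so $u^TSu=S$. Your verification that $v$ has the same nilpotency index $2n+1$ as $N$ (hence is a single Jordan block) is also correct, since $v=N\cdot(2I+N)^{-1}$ and $(2I+N)^{-1}$ is invertible. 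The remaining construction --- the signed antidiagonal $S_{i,j}=(-1)^{i-1}\delta_{i+j,2n+2}$ --- is visibly symmetric because $i+j=2n+2$ forces $i\equiv j\pmod 2$, visibly nonsingular, and a one-line check on basis vectors gives skew-adjointness. The net effect is a shorter and more conceptual argument, at the price of being unusable in characteristic $2$ and of not yielding the explicit matrix template that the paper recycles in its later refinements.

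One small caveat: you only need the implication ``$v$ skew-adjoint $\Rightarrow$ $u$ orthogonal,'' not the full equivalence, so the converse direction can be dropped. Also, you work with the lower-shift convention $e_i\mapsto e_{i+1}$ for $J_{2n+1}(0)$; since the upper- and lower-shift nilpotent Jordan blocks are similar (conjugate by the flip permutation), this is harmless, but it is worth a word to reassure the reader that the Gram matrix changes only by a congruence.
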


\begin{proof}[Proof of Proposition \ref{jordaneven}]
Let $A=(a_{i,j}) \in \Mat_{2n}(\K)$. A straightforward computation shows that $J_{2n}(1)^TAJ_{2n}(1)=A$ if and only if both of the following conditions are satisfied:
\begin{enumerate}[(i)]
\item $a_{i,j-1}+a_{i-1,j}+a_{i-1,j-1}=0$ for every $(i,j)\in \lcro 2,2n\rcro^2$;
\item $a_{k,1}=a_{1,k}=0$ for every $k\in \lcro 1,2n-1\rcro$.
\end{enumerate}
We construct such a matrix $A \in \Mat_{2n}(\K)$ as follows:
\begin{itemize}
\item we set $a_{i,j}:=0$ whenever $i+j<2n+1$;
\item we set $a_{i,2n+1-i}:=(-1)^i$ for every $i \in \lcro 1,2n\rcro$;
\item we set $a_{i,j}:=0$ whenever $i>n$ and $j>n$;
\item we then define (doubly)-inductively $a_{i,j}$ for $i$ from $n$ down to $2$ and for
$j$ from $2n-i+2$ up to $2n$ by $a_{i,j}:=-a_{i,j-1}-a_{i+1,j-1}$;
\item symmetrically, we define $a_{i,j}$ for $j$ from $n$ down to $2$ and for
$i$ from $2n-j+2$ up to $2n$ by $a_{i,j}:=-a_{i-1,j}-a_{i-1,j+1}$.
\end{itemize}
One checks that $A$ satisfies conditions (i) and (ii). Moreover, $A$
has the form
$$\begin{bmatrix}
0 & 0 & \dots & 0 & 1 \\
0 & &  & -1 & * \\
\vdots & & \iddots & & \\
0 & 1 &  & * & * \\
-1 & * &  & * & *
\end{bmatrix}$$
and hence $A$ is nonsingular.
Had we replaced the second point by $a_{2n+1-i,i}:=(-1)^{i+1}$, the matrix would have been
$A^T$ since the other conditions are symmetrical. Since $a_{2n+1-i,i}=(-1)^{2n-i}=-(-1)^{i+1}$
and all the other conditions are linear, this shows that $A^T=-A$. As all the diagonal entries of $A$ have been set to zero,
this shows that $A$ is alternate. Therefore $J_{2n}(1)$ is essentially symplectic. \\
Assume finally that $\car(\K)=2$ and choose an arbitrary symplectic form $b$ for which $u : X \mapsto J_{2n}(1)X$ is a symplectic morphism.
Then a (regular) quadratic form $q$ with polar form $b$ is determined by choosing $(q(e_1),\dots,q(e_{2n}))$ arbitrarily
in $\K^{2n}$ (where $(e_1,\dots,e_{2n})$ is the canonical basis of $\K^{2n}$). Given such a form $q$,
the map $u$ is $q$-orthogonal if and only if $q(u(e_i))=q(e_i)$ for every $i \in \lcro 1,2n\rcro$,
which is equivalent to having $q(e_{i-1})=-b(e_i,e_{i-1})$ whenever $i \geq 2$.
Obviously one may find a quadratic form $q$ which fits these conditions (and we may even choose $q(e_{2n})$ as we please).
\end{proof}

\begin{proof}[Proof of Proposition \ref{jordanodd}]
Using the same arguments as in the previous proof, we see that it suffices to find a nonsingular symmetric matrix $A=(a_{i,j}) \in \Mat_{2n+1}(\K)$
which satisfies:
\begin{enumerate}[(i)]
\item $a_{i,j-1}+a_{i-1,j}+a_{i-1,j-1}=0$ for every $(i,j)\in \lcro 2,2n+1\rcro^2$;
\item $a_{k,1}=a_{1,k}=0$ for every $k\in \lcro 1,2n\rcro$.
\end{enumerate}
We construct such a matrix $A \in \Mat_{2n+1}(\K)$ as follows:
\begin{itemize}
\item we set $a_{i,j}:=0$ whenever $i+j<2n+2$;
\item we set $a_{i,2n+2-i}:=(-1)^i$ for every $i \in \lcro 1,2n+1\rcro$;
\item we set $a_{i,j}:=0$ whenever $i>n+1$ and $j>n+1$;
\item we set $a_{i,n+1}:=\dfrac{(-1)^i}{2}$ whenever $i>n+1$ and $a_{n+1,j}:=\dfrac{(-1)^j}{2}$ whenever $j>n+1$;
\item we then define (doubly)-inductively $a_{i,j}$ for $i$ from $n$ down to $2$ and for
$j$ from $2n-i+3$ up to $2n+1$ by $a_{i,j}:=-a_{i,j-1}-a_{i+1,j-1}$;
\item symmetrically, we define $a_{i,j}$ for $j$ from $n$ down to $2$ and for
$i$ from $2n-j+3$ up to $2n+1$ by $a_{i,j}:=-a_{i-1,j}-a_{i-1,j+1}$.
\end{itemize}
One checks that $A$ satisfies conditions (i) and (ii) and is both symmetric and nonsingular,
the key points being that $a_{n+1,n+1}+a_{n+1,n+2}+a_{n+2,n+1}=0$ and $a_{n+1,n+2}=a_{n+2,n+1}$ (this is precisely where we need
the assumption on the characteristic of $\K$).
\end{proof}

\section{The case of elementary companion matrices}\label{companionsection}

In order to conclude our proof of Theorems \ref{symplectictheorem}, \ref{bilinorthotheorem}, and \ref{orthotheoremcar<>2},
we must prove implication (v) $\Rightarrow$ (i) in each of them.
By the considerations of Sections \ref{Asyminverse}, \ref{BplusB-1},
and \ref{Jordanblock}, we must prove the following proposition:

\begin{prop}\label{companionprop}
Let $P \in \K[x]$ be an irreducible palindromial with no root in $\{1,-1\}$, and let $a \in \N^*$.
Then the companion matrix $C(P^a)$ is both essentially orthogonal and essentially symplectic.
\end{prop}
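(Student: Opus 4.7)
The plan is to view $R := \K[x]/(P^a)$ as a commutative $\K$-algebra equipped with the involution $\sigma$ induced by $x \mapsto x^{-1}$, and to build $\K$-bilinear forms on $R$ preserved by multiplication by $x$ by composing $S$-bilinear forms (where $S := R^\sigma$ is the subring of invariants) with a well-chosen $\K$-linear form $\psi : S \to \K$. Since an irreducible palindromial of odd degree always admits $1$ or $-1$ as a root (pair the $k$-th and $(d-k)$-th coefficients in $P(\pm 1)$), $P$ has even degree $d = 2m$. Setting $y := x + x^{-1}$, one writes $P(x) = x^m Q(y)$ for some irreducible monic $Q \in \K[y]$ of degree $m$. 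The identity $P(x^{-1}) = (a_0/x^d) P(x)$ (a consequence of $P = P^\#$) ensures that $\sigma$ is well-defined on $R$, and a dimension count using $R = \K[y] + \K[y] \cdot x$ and $x^2 = yx - 1$ shows that $S = \K[y] \cong \K[y]/(Q^a)$ and that $R$ is a free $S$-module of rank $2$ with basis $(1, x)$. Crucially, $S$ is a local Artinian principal ideal ring, so its non-zero ideals form a chain, and every such ideal contains the socle $(Q^{a-1})$.

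On $R$ I would then introduce two $S$-bilinear forms preserved by multiplication by $x$: the alternate form $\beta(p + qx, p' + q'x) := pq' - qp'$, and the symmetric form $\gamma(u, v) := u\sigma(v) + v\sigma(u)$, which lies in $S$ by $\sigma$-invariance. Preservation of $\gamma$ is immediate from $x\sigma(x) = 1$, and that of $\beta$ from the explicit formula $xu = -q + (p + qy)x$. Their Gram matrices in the basis $(1, x)$ have determinants $1$ and $4 - y^2$, respectively; the latter is a unit of $S$ because $Q \in \{y - 2,\, y + 2\}$ would force $P = x^{m-1}(x \mp 1)^2$, contradicting irreducibility. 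Likewise $y$ itself is a unit of $S$ when $\car(\K) = 2$ (otherwise $Q = y$ and $P = x^{m-1}(x+1)^2$, reducible). Picking any $\K$-linear form $\psi : S \to \K$ that does not vanish on the socle $(Q^{a-1})$, one sets $b := \psi \circ \beta$ and $b' := \psi \circ \gamma$: both are $\K$-bilinear, invariant under multiplication by $x$, and respectively alternate and symmetric.

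Non-degeneracy is the crux. For $u \neq 0$, the image $\beta(u, R)$ is the ideal of $S$ generated by the coordinates of $u$, hence non-zero, hence contains the socle; the same conclusion holds for $\gamma(u, R)$ via the $S$-non-degeneracy of $\gamma$. Therefore $\psi \circ \beta(u, \cdot)$ and $\psi \circ \gamma(u, \cdot)$ are non-zero linear forms on $R$, which shows that $b$ is symplectic and $b'$ is a non-degenerate symmetric form. This yields essential symplecticity in every characteristic and essential orthogonality when $\car(\K) \neq 2$. When $\car(\K) = 2$, I would furthermore invoke the quadratic map $N : R \to S$, $N(u) := u\sigma(u)$: it is invariant under multiplication by $x$ (since $x\sigma(x) = 1$), and its polar form is $\gamma = y\beta$; thus $q := \psi \circ N$ is a $\K$-valued quadratic form on $R$ invariant under multiplication by $x$, whose polar form $\psi_y \circ \beta$ (with $\psi_y(z) := \psi(yz)$, still non-zero on the socle since $y$ is a unit) is symplectic. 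Hence $q$ is regular, giving essential orthogonality in characteristic $2$ as well. The main obstacle is tracking non-degeneracy when composing with $\psi$; the key point is that all ideals of the local principal ideal ring $S$ are totally ordered, so a single $\K$-linear form that is non-zero on the socle automatically separates every non-zero element of $R$ via the forms $\beta$ and $\gamma$.
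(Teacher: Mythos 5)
Your proof is correct and takes a route genuinely different from the paper's. The paper divides Proposition \ref{companionprop} into two cases: for infinite $\K$ it reduces to $\overline{\K}$ (where the hypothesis is vacuous) via the base-change result Proposition \ref{infiniteextension}, and for finite $\K$ it rewrites $C(P^a) \sim J_a(1) \otimes C(P)$, views $V$ as a vector space over the field $\L := \K[x]/(P)$, and builds the required form by composing $\Tr_{\K'/\K}$ with a $J_a(1)$-invariant hermitian form over the quadratic extension $\K'\subset\L$ (Lemma \ref{existhermitian}), which only makes sense because $\K$ is finite. You bypass both steps by working directly in $R := \K[x]/(P^a)$, which is not semisimple but is a free rank-$2$ module over the local Artinian ring $S := R^\sigma \cong \K[y]/(Q^a)$; any $\K$-linear form $\psi:S\to\K$ non-vanishing on the socle $(Q^{a-1})$ then substitutes for the trace functional of the paper's finite-field argument, and non-degeneracy is automatic because the non-zero ideals of the local principal ideal ring $S$ all contain the socle. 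The result is a single construction valid over any field, avoiding base change and hermitian-form theory alike; it is arguably more elementary and conceptually cleaner. One small omission worth spelling out: the hypothesis that $P$ has no root in $\{1,-1\}$ forces $P(0)=1$ (the alternative $P(0)=-1$, combined with $P=P^\#$, always produces a root at $1$ or $-1$), and this is what legitimises the identity $P(x)=x^m Q(x+x^{-1})$ and hence your whole setup; likewise, your exclusions $Q\neq y$ and $Q\neq y\pm 2$ should note that these force $\deg Q=1$, hence $m=1$ and $P\in\{x^2+1,(x\mp1)^2\}$, each reducible in the relevant characteristic.
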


We distinguish between two cases, whether $\K$ is finite or not.

\subsection{The case of an infinite field}

Assume that $\K$ is infinite, and notice that Proposition \ref{companionprop} holds trivially in $\overline{\K}$:
indeed, any palindromial $Q$ of $\overline{\K}[x]$ with degree $1$ is $x-\lambda$ for some
$\lambda \in \overline{\K} \setminus \{0\}$ such that $\frac{1}{\lambda}=\lambda$, hence
$\lambda=\pm 1$.
It follows that Theorems \ref{symplectictheorem}, \ref{bilinorthotheorem} and \ref{orthotheoremcar<>2} all hold for the field $\overline{\K}$.
It thus suffices to prove the following result:

\begin{prop}\label{infiniteextension}
Assume that $\K$ is infinite and let $\L$ be a field extension of $\K$.
Let $A \in \Mat_n(\K)$, and assume that $A$ is essentially orthogonal (respectively, essentially symplectic, respectively, essentially bilin-orthogonal)
in $\Mat_n(\L)$. Then $A$ is essentially orthogonal (respectively, essentially symplectic, respectively, essentially bilin-orthogonal)
in $\Mat_n(\K)$.
\end{prop}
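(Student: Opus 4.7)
The plan is to reduce the statement to an elementary fact about polynomial functions over infinite fields. In each of the three cases, the property of being essentially orthogonal, symplectic or bilin-orthogonal can be phrased as: ``there exists a matrix $S$ lying in a certain $\K$-linear subspace $\calW_\K$ of $\Mat_n(\K)$ such that a polynomial $\Delta$ in the entries of $S$, with coefficients in $\K$, takes a nonzero value at $S$.'' Because the defining linear equations of $\calW_\K$ and the coefficients of $\Delta$ all live in $\K$, scalar extension from $\K$ to $\L$ is harmless, and the proposition then reduces to the principle that a nonzero polynomial in $\K[x_1,\dots,x_d]$ cannot vanish identically on $\K^d$ when $\K$ is infinite.

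Concretely, I would introduce, depending on the case:
\begin{itemize}
\item $\calW_\K := \{S \in \Mat_n(\K) : S^T=-S,\ S \text{ has zero diagonal},\ A^T S A = S\}$ with $\Delta(S):=\det S$, for the essentially symplectic case;
\item $\calW_\K := \{S \in \Mat_n(\K) : S^T=S,\ A^T S A = S\}$ with $\Delta(S):=\det S$, for the essentially bilin-orthogonal case;
\item $\calW_\K := \{B \in \Mat_n(\K) : A^T B A+B \text{ is alternate}\}$ with $\Delta(B):=\det(B+B^T)$, for the essentially orthogonal case in characteristic $2$ (using the characterization recalled at the end of the introduction).
\end{itemize}
In each case the defining conditions of $\calW_\K$ are $\K$-linear in the entries of the unknown matrix, with coefficients that are polynomial expressions in the entries of $A$, hence in $\K$. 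Consequently, if $\calW_\L$ denotes the analogous subspace over $\L$, then $\calW_\L = \calW_\K \otimes_\K \L$, and any $\K$-basis $(S_1,\dots,S_d)$ of $\calW_\K$ is also an $\L$-basis of $\calW_\L$.

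Next, I would consider the polynomial
$$P(x_1,\dots,x_d) := \Delta(x_1 S_1 + \cdots + x_d S_d) \in \K[x_1,\dots,x_d],$$
which is the same polynomial whether we view its arguments in $\K$ or in $\L$. By assumption, $A$ satisfies the relevant property over $\L$, so there exists $(\lambda_1,\dots,\lambda_d) \in \L^d$ with $P(\lambda_1,\dots,\lambda_d) \neq 0$. Hence $P$ is not the zero polynomial in $\K[x_1,\dots,x_d]$. Since $\K$ is infinite, a standard induction on the number of variables shows that a nonzero polynomial in $\K[x_1,\dots,x_d]$ cannot vanish on all of $\K^d$. Therefore there exists $(a_1,\dots,a_d) \in \K^d$ with $P(a_1,\dots,a_d) \neq 0$, and the matrix $S := a_1 S_1 + \cdots + a_d S_d \in \calW_\K$ witnesses that $A$ has the desired property over $\K$.

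There is no real obstacle: the only point that requires some care is the description of essential orthogonality in characteristic $2$ as a condition of the form ``linear equations with $\K$-coefficients, plus a Zariski-open condition with $\K$-coefficients,'' but this is exactly provided by the third characterization recalled just before Proposition~1 of the paper, where the linear condition ``$A^T B A + B$ is alternate'' (i.e.\ skew-symmetric with zero diagonal) and the open condition ``$B + B^T$ is nonsingular'' both have the required shape.
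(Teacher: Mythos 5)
Your proof is correct and follows essentially the same strategy as the paper: reduce each property to the nonvanishing of a $\K$-coefficient polynomial on the $\K$-points of a $\K$-defined linear space, and invoke the fact that a nonzero polynomial over an infinite field cannot vanish identically. The only genuine difference is how the family $(S_1,\dots,S_d)$ of $\K$-matrices is produced. You take a full $\K$-basis of the solution space $\calW_\K$ and then appeal to the (correct, but not entirely free) fact that this is also an $\L$-basis of $\calW_\L$ — i.e., that the solution space of a $\K$-coefficient homogeneous linear system has the same dimension over $\L$ and that $\K$-linear independence of vectors over $\K$ persists over $\L$. The paper avoids this base-change lemma: starting from a concrete $\L$-witness $S$, it picks a $\K$-basis $(b_1,\dots,b_p)$ of the $\K$-span of the \emph{entries} of $S$ inside $\L$, writes $S=b_1S_1+\cdots+b_pS_p$ with $S_k\in\Mat_n(\K)$, and observes that $\K$-linear independence of the $b_k$ forces each $S_k$ individually to satisfy the linear constraints. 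This is more economical (no rank-stability argument needed) and ties the polynomial to the particular witness, but it lands in exactly the same place: a nonzero polynomial in $\K[x_1,\dots,x_p]$ evaluated at a $\K$-point. Both versions are sound; yours is a mild, correct variant of the paper's.
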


\begin{proof}
The line of reasoning here is classical.
\begin{itemize}
\item Assume that $A^TSA=S$ for some nonsingular symmetric matrix $S \in \Mat_n(\L)$. Choose a basis
$(b_1,\dots,b_p)$ of the $\K$-linear subspace of $\L$ spanned by the entries of $S$, and
split up $S=b_1S_1+\cdots+b_pS_p$ where $S_1,\dots,S_p$ are symmetric matrices of $\Mat_n(\K)$.
Since $A \in \Mat_n(\K)$, we find that $A^T S_k A=S_k$ for every $k \in \lcro 1,p\rcro$,
hence $A^T(x_1S_1+\cdots+x_pS_p)A=x_1S_1+\cdots+x_pS_p$ for every $(x_1,\dots,x_p)\in \K^p$. \\
The polynomial $\det(x_1S_1+\cdots+x_pS_p) \in \K[x_1,\dots,x_p]$ is nonzero since
$\det(b_1S_1+\cdots+b_pS_p) \neq 0$. As $\K$ is infinite, we deduce that
we may find some
$(x_1,\dots,x_p) \in \K^p$ such that $\det(x_1S_1+\cdots+x_pS_p) \neq 0$.
It follows that $S':=x_1S_1+\cdots+x_pS_p$ is a nonsingular symmetric matrix of $\Mat_n(\K)$ which satisfies
$A^TS'A=S'$. \\
This shows that $A$ is essentially bilin-orthogonal over $\K$ if it is essentially bilin-orthogonal over $\L$.
\item A similar argument shows that $A$ is
essentially symplectic over $\K$ if it is essentially symplectic over $\L$.
\item Assume finally that $\car(\K) \neq 2$ and $A$ is essentially orthogonal over $\L$.
Choose $M \in \Mat_n(\L)$ such that $A^TMA+M$ is alternate and $M+M^T$ is nonsingular.
As before, split up $M=b_1M_1+\cdots+b_pM_p$ where $M_1,\dots,M_p$ are all matrices of $\Mat_n(\K)$
with $A^TM_kA+M_k$ alternate for each $k \in \lcro 1,p\rcro$, and $(b_1,\dots,b_p)\in \L^p$ is linearly independent over $\K$.
With the same argument as before, we see that we may find
$(x_1,\dots,x_p) \in \K^p$ such that $x_1(M_1+M_1^T)+\cdots+x_p(M_p+M_p^T)$ is nonsingular, in which case
$M':=x_1M_1+\cdots+x_pM_p$ is such that $A^TM'A+M'$ is alternate and $M'+(M')^T$ is nonsingular.
This shows that if $A$ is essentially orthogonal over $\L$, then it is essentially orthogonal over $\K$.
\end{itemize}
\end{proof}

As a consequence of Theorems \ref{symplectictheorem} to \ref{orthotheoremcar2},
Proposition \ref{infiniteextension} still holds when $\K$ is finite, although we do not know any direct proof of it.

\subsection{The case of a finite field}\label{finitecompanion}

When $\K$ is finite, we know that $P$, being irreducible, must be separable (i.e.,
it has distinct roots $x_1,\dots,x_n$ in $\overline{\K}$). As we have seen earlier, we must have $n \geq 2$ (in fact, $n$ is even
since no root of $P$ in $\overline{\K}$ is fixed by the involution $a \mapsto a^{-1}$).
We introduce the quotient field $\L:=\K[x]/(P(x))$ and denote by $y$ the class of $x$ in it.
Since $P$ is a palindromial, $y^{-1}$ is another root of $P$ in $\L$
hence we may find an automorphism $\sigma$ of $\L$ over $\K$ such that
$\sigma(y)=y^{-1}$. It follows that $\sigma^2(y)= \sigma(y^{-1})=\sigma(y)^{-1}=y$,
hence $\sigma^2=\id$ because $\L=\K[y]$. \\
We then define the subfield $\K'=\{z \in \L : \; \sigma(z)=z\}$ of $\L$
and notice that $\L$ is a quadratic extension of $\K'$:
indeed, $\L=\K'[y]$, and $(X-y)(X-\sigma(y))$ is the minimal polynomial of $y$ on $\K'$
since $y \not\in \K'$, $y\sigma(y)=1$ and $y+\sigma(y) \in \K'$ (because $\sigma^2=\id$). \\
By an hermitian matrix of $\Mat_k(\L)$, we mean an hermitian matrix with respect to the quadratic extension
$\K'-\L$, i.e., a matrix $H \in \Mat_k(\L)$ which satisfies $\sigma(H)^T=H$.

Let us come back to the matrix $C(P^a)$ and remark that
$$C(P^a) \sim J_a(1) \otimes C(P).$$
There are numerous ways to prove this: we note that, over $\overline{\K}$,
\begin{align*}
C(P^a) & \sim C((x-x_1)^a) \oplus C((x-x_2)^a) \oplus \cdots \oplus C((x-x_n)^a) \\
& \sim J_a(x_1) \oplus J_a(x_2) \oplus \cdots \oplus J_a(x_n) \\
& \sim x_1\,J_a(1) \oplus \cdots \oplus x_n\,J_a(1) \\
& \sim J_a(1) \otimes \Diag(x_1,\dots,x_n)  \\
& \sim J_a(1) \otimes C(P),
\end{align*}
and invoke the invariance of similarity when the ground field is extended.
It then suffices to prove that $A:=J_a(1) \otimes C(P)$ is both essentially orthogonal and essentially symplectic.
Now set $D:=I_a \otimes C(P)$
the block-diagonal matrix with $a$ diagonal blocks all equal to $C(P)$.
Then $D$ has $P$ as minimal polynomial hence $D$ induces a structure of $\L$-vector space on $V:=\K^{na}$.
Moreover $u : X \mapsto AX$ is $\L$-linear since $A$ commutes with $D$,
and $u$ is represented by the matrix $y.J_a(1)$ in some basis of the $\L$-vector space $V$. \\
We need the following result, which we will prove later:

\begin{lemme}\label{existhermitian}
There is a nonsingular hermitian matrix $H \in \Mat_a(\L)$ such that $J_a(1)$ is $H$-unitary, i.e.,
$\sigma(J_a(1))^THJ_a(1)=H$, i.e., $J_a(1)^THJ_a(1)=H$.
\end{lemme}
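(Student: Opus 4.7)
The plan is to obtain $H$ by averaging over $\sigma$, starting from any nonsingular matrix $M \in \Mat_a(\K)$ satisfying $J_a(1)^T M J_a(1) = M$. Given such an $M$, the fact that $J_a(1)$ is fixed by $\sigma$ and that transposition preserves the defining equation shows that $M^\star := \sigma(M)^T$ also satisfies $J_a(1)^T M^\star J_a(1) = M^\star$. Hence for every $\lambda \in \L$, the matrix
$$H_\lambda := \lambda M + \sigma(\lambda)\, M^\star$$
is hermitian and satisfies $J_a(1)^T H_\lambda J_a(1) = H_\lambda$. Everything comes down to choosing $\lambda$ so that $H_\lambda$ is nonsingular.

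A direct calculation from the two equations $J_a(1)^T M J_a(1) = M$ and $J_a(1)^T M^\star J_a(1) = M^\star$ shows that $M^\star M^{-1}$ commutes with $J_a(1)^T$. Since $J_a(1)$ is cyclic (it has a single Jordan block), so is $J_a(1)^T$, and the commutant of $J_a(1)^T$ in $\Mat_a(\L)$ reduces to the polynomial algebra generated by it; thus $M^\star M^{-1} = p(J_a(1)^T)$ for some $p \in \L[x]$. As $J_a(1)^T$ is unipotent, $p(J_a(1)^T)$ has $c := p(1)$ as its sole eigenvalue, and factoring $H_\lambda = \bigl(\lambda I_a + \sigma(\lambda)\, p(J_a(1)^T)\bigr)\, M$ yields
$$\det H_\lambda = \det M \cdot \bigl(\lambda + c\,\sigma(\lambda)\bigr)^a.$$
The $\K'$-linear map $f : \L \to \L,\ \lambda \mapsto \lambda + c\,\sigma(\lambda)$ cannot vanish identically, for otherwise $f(1) = 1 + c$ and $f(y) = y + c\,y^{-1}$ would both vanish, forcing $c = -1$ and $y^2 = 1$, contradicting the hypothesis that $P$ has no root in $\{1,-1\}$. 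Any $\lambda$ with $f(\lambda) \neq 0$ then produces a nonsingular hermitian $H_\lambda$.

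It remains to exhibit a nonsingular $M \in \Mat_a(\K)$ satisfying $J_a(1)^T M J_a(1) = M$. When $a$ is even, Proposition \ref{jordaneven} provides an alternate such $M$; when $a$ is odd and $\car(\K) \neq 2$, Proposition \ref{jordanodd} provides a symmetric one. The only remaining case is $a$ odd with $\car(\K) = 2$, where Theorem \ref{bilinorthotheorem} even rules out a symmetric or alternate nonsingular solution. Here $M$ must be produced by hand, mimicking the earlier propositions: set $m_{i,j} := 0$ whenever $i+j < a+1$, place $1$'s along the main anti-diagonal, and fill in the upper-right entries inductively through the recurrence $m_{i-1,j-1} = m_{i-1,j} + m_{i,j-1}$ that follows from the defining equation in characteristic~$2$. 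The resulting upper-anti-triangular matrix has nonzero anti-diagonal and is therefore nonsingular. This handcrafted construction is the main obstacle of the whole argument; once it is in place, the averaging step delivers $H$ with essentially no further effort.
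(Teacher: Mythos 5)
The proposal is correct and takes a genuinely different route from the paper. The paper's proof of Lemma~\ref{existhermitian} is a fourfold case analysis: for $\car(\K)\neq 2$ and $a$ odd it takes $H$ to be the symmetric matrix of Proposition~\ref{jordanodd} (hermitian because $\sigma$-fixed and symmetric); for $a$ even it twists the alternate matrix of Proposition~\ref{jordaneven} by a skew element $\varepsilon$ when $\car(\K)\neq 2$, and uses it as is when $\car(\K)=2$; and for $\car(\K)=2$ with $a$ odd --- where no symmetric or alternate nonsingular $M\in\Mat_a(\K)$ can exist --- it builds $H$ directly over $\L$ with entries $\alpha$, $\sigma(\alpha)$, $\beta=\alpha+\sigma(\alpha)$. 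You instead start from \emph{any} nonsingular $M\in\Mat_a(\K)$ satisfying $J_a(1)^TMJ_a(1)=M$ (no symmetry required), form the hermitian pencil $H_\lambda=\lambda M+\sigma(\lambda)\,\sigma(M)^T$, and use the commutant-of-a-cyclic-matrix observation to factor $\det H_\lambda=\det M\cdot\bigl(\lambda+c\,\sigma(\lambda)\bigr)^a$; the nonvanishing of the $\K'$-linear form $\lambda\mapsto\lambda+c\,\sigma(\lambda)$ then uses precisely the hypothesis that $P$ has no root in $\{1,-1\}$. This averaging step cleanly eliminates the delicate hermitian construction in the char-$2$, $a$-odd case and treats all cases uniformly. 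One simplification you missed: the handcrafted anti-triangular $M$ in the remaining case is unnecessary, since $J_a(1)^TMJ_a(1)=M$ says exactly that $M$ conjugates $J_a(1)^{-1}$ into $(J_a(1)^{-1})^T$, and every square matrix over $\K$ is similar to its transpose over $\K$ (as the paper itself recalls via \cite{Taussky}); so a suitable nonsingular $M\in\Mat_a(\K)$ always exists with no construction at all, which would make your argument entirely case-free.
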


Fix such a matrix $H$ and denote by $b : (X,Y) \mapsto \sigma(X)^THY$ the hermitian product on $V$
(identified with $\L^a$) associated to it. Since $y\sigma(y)=1$, we find that $y.\id_V$ is $b$-unitary,
and we then deduce from the assumptions that $u$ is $b$-unitary.
Set finally
$$q : X \mapsto \Tr_{\K'/\K}\bigl(b(X,X)\bigr)$$
and notice that $q$ is a quadratic form on the $\K$-vector space $V$ for which $u$ is orthogonal. \\
Notice that for every $(X,Y)\in V^2$,
$$q(X+Y)-q(X)-q(Y)=\Tr_{\K'/\K}\bigl(b(X,Y)+b(Y,X))\bigr)
=\Tr_{\K'/\K}\bigl(\Tr_{\L/\K'}(b(X,Y))\bigr)=\Tr_{\L/\K} b(X,Y),$$
and since $b$ is non-degenerate, it follows that the polar form of $q$ is also non-degenerate (whatever the value of $\car(\K)$). \\
Assume finally that $\car(\K) \neq 2$. We may then choose $\varepsilon \in \L \setminus \K'$ such that
$\sigma(\varepsilon)=-\varepsilon$, and classically
$$(X,Y) \mapsto \varepsilon\,(b(X,Y)-b(Y,X))$$
is a symplectic form on the $\K'$-vector space $V$ for which $u$ is a symplectic morphism.
It follows that
$$(X,Y) \mapsto \Tr_{\K'/\K}\bigl(\varepsilon(b(X,Y)-b(Y,X))\bigr)$$
is a symplectic form on the $\K$-vector space $V$ for which $u$ is a symplectic morphism.
We may then finish the proof of Proposition \ref{companionprop} by establishing Lemma \ref{existhermitian}.

\begin{proof}[Proof of Lemma \ref{existhermitian}]
Assume first that $\car(\K) \neq 2$, and choose $\varepsilon \in \L \setminus \{0\}$ such that $\sigma(\varepsilon)=-\varepsilon$.
\begin{itemize}
\item If $a$ is odd, we use Proposition \ref{jordanodd} to find a nonsingular symmetric matrix $S \in \Mat_a(\K)$ such that $J_a(1)^TS J_a(1)=S$
and we set $H:=S$.
\item If $a$ is even, we use Proposition \ref{jordaneven} to find a nonsingular alternate matrix $N \in \Mat_a(\K)$ such that $J_a(1)^TN J_a(1)=N$, and we set $H:=\varepsilon N$.
\end{itemize}
In any case, $H$ has the claimed properties. \\
Assume now that $\car(\K)=2$. If $a$ is even, we find a nonsingular alternate matrix $N \in \Mat_n(\K)$ such that $J_a(1)^TN J_a(1)=N$, and we notice
that $H:=N$ is hermitian. Assume finally that $a$ is odd. Then $\K'=\Ker(\sigma+\id)$. Choose $\alpha \in \L \setminus \K'$
and notice that $\beta:=\alpha+\sigma(\alpha)$ belongs to $\K'\setminus\{0\}$. We write $a=2b+1$ for some integer $b$, and define
$H \in \Mat_a(\K)$ as follows:
\begin{itemize}
\item we set $h_{i,j}:=0$ whenever $i+j<2b+2$;
\item we set $h_{i,2b+2-i}:=\beta$ for every $i \in \lcro 1,2b+1\rcro$;
\item we set $h_{i,j}:=0$ whenever $i>b+1$ and $j>b+1$;
\item we set $h_{i,b+1}:=\alpha$ whenever $i>b+1$ and $h_{b+1,j}:=\sigma(\alpha)$ whenever $j>b+1$;
\item we then define (doubly)-inductively $h_{i,j}$ for $i$ from $b$ down to $2$ and for
$j$ from $2b-i+3$ up to $2b+1$ by $h_{i,j}:=-h_{i,j-1}-h_{i+1,j-1}$;
\item symmetrically, we define $h_{i,j}$ for $j$ from $b$ down to $2$ and for
$i$ from $2b-j+3$ up to $2b+1$ by $h_{i,j}:=-h_{i-1,j}-h_{i-1,j+1}$.
\end{itemize}
As in the proof of Proposition \ref{jordaneven}, one shows that $H$ is nonsingular, hermitian,
and $J_a(1)^THJ_a(1)=H$.
\end{proof}

This finishes our proof of implication (v) $\Rightarrow$ (i) in Theorems \ref{symplectictheorem}, \ref{bilinorthotheorem}
and \ref{orthotheoremcar<>2}. Therefore, all those theorems are proved, and
Theorem \ref{orthotheoremcar2} follows from them and from Proposition \ref{companionprop}, as explained earlier.

\section{Refinements for symmetric bilinear forms in characteristic 2}\label{car2bilinSec}

In this section, we assume that $\K$ has characteristic $2$ and is \emph{perfect} (e.g.\ $\K$ is finite or algebraically closed).
Let $n \in \N^*$. Then the following results hold (see Chapter XXXV of \cite{invitquad}):
\begin{itemize}
\item if $n$ is odd, the matrix $I_n$ is, up to congruence, the sole
nonsingular symmetric matrix of $\Mat_n(\K)$;
\item for every $n \in \N$, the matrix $I_n$ is, up to congruence, the sole
nonsingular nonalternate symmetric matrix of $\Mat_n(\K)$.
\end{itemize}

When $n$ is odd, we have successfully determined the Jordan canonical forms of the elements in the group $\text{O}_n(\K)$.
Assume, for the rest of the section, that $n$ is even. Then we have two congruence classes of symmetric matrices in $\Mat_n(\K)$:
the one of $I_n$ and the one of $\begin{bmatrix}
0 & I_{n/2} \\
I_{n/2} & 0
\end{bmatrix}$. We have already classified the essentially symplectic morphisms, so
we are now interested in the automorphisms that are orthogonal for some regular nonalternate symmetric bilinear form.
A necessary condition for having this property is the following:

\begin{prop}
Let $u \in \GL(V)$ and assume that there is a regular nonalternate symmetric bilinear form $b$ for which $u$ is orthogonal. \\
Then $1$ is an eigenvalue of $u$.
\end{prop}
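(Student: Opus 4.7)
The plan is to exploit the perfectness of $\K$ to turn the map $\phi : x \mapsto b(x,x)$ into a genuine nonzero $\K$-linear form that is invariant under $u$. Its kernel will then be a hyperplane containing $\im(u - \id_V)$, forcing $u - \id_V$ to be non-surjective and hence non-injective.

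First I would record that, in characteristic $2$, the function $\phi$ satisfies
\[
\phi(x+y) = b(x,x) + 2\,b(x,y) + b(y,y) = \phi(x) + \phi(y) \quad \text{and} \quad \phi(\lambda x) = \lambda^2 \phi(x),
\]
so it is additive and Frobenius-semilinear. Because $\K$ is perfect of characteristic $2$, the Frobenius map $\lambda \mapsto \lambda^2$ is a bijection of $\K$, so I can define $\psi : V \to \K$ by $\psi(x) := \sqrt{\phi(x)}$. A short verification, using the identity $(a+b)^{1/2} = a^{1/2} + b^{1/2}$ in characteristic $2$ (immediate upon squaring), shows that $\psi(x+y) = \psi(x) + \psi(y)$ and $\psi(\lambda x) = \lambda \psi(x)$, so $\psi$ is a $\K$-linear form on $V$ with $\ker\psi = \ker\phi$.

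Next I would use the two hypotheses. Because $b$ is not alternate, there exists $x_0 \in V$ with $\phi(x_0) \neq 0$, so $\psi$ is nonzero and $\ker\psi$ is a hyperplane of $V$. Because $u$ is $b$-orthogonal, $\phi(u(x)) = \phi(x)$ for all $x$, and extracting square roots (or invoking injectivity of Frobenius) gives $\psi \circ u = \psi$. Hence $\psi\bigl((u - \id_V)(x)\bigr) = 0$ for every $x \in V$, i.e.\ $\im(u - \id_V) \subseteq \ker\psi \subsetneq V$. Since $V$ is finite-dimensional, $u - \id_V$ is not surjective, hence not injective, so $\ker(u - \id_V) \neq \{0\}$ and $1$ is an eigenvalue of $u$.

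I do not anticipate any serious obstacle here; the only delicate point is defining $\psi$, which depends squarely on $\K$ being perfect (so that every element has a square root). Without perfectness the construction would collapse, which is consistent with the standing hypothesis of the section.
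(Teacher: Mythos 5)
Your proof is correct, and the core idea is the same as the paper's: the diagonal quadratic map $x\mapsto b(x,x)$ is (in characteristic $2$, over a perfect field) the square of a $\K$-linear form, that linear form is nonzero because $b$ is nonalternate, and it is $u$-invariant because $u$ is a $b$-isometry; hence its kernel is a $u$-invariant hyperplane containing $\im(u-\id_V)$, so $1$ is an eigenvalue. Where you differ is in how you exhibit that linear form: the paper first invokes the classification of regular nonalternate symmetric bilinear forms over a perfect field of characteristic $2$ (namely that they are all congruent to the standard form $X^TY$), after which $q(x_1,\dots,x_n)=(x_1+\cdots+x_n)^2$ is visibly a square; you instead observe directly that $\phi(x)=b(x,x)$ is additive and Frobenius-semilinear and then apply the inverse of the Frobenius to get $\psi=\sqrt{\phi}$ intrinsically. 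Your route is a modest simplification in that it does not rely on the congruence classification theorem, using perfectness only through the bijectivity of squaring; both arguments fail for imperfect fields, consistent with the paper's remark that the statement holds in general only by a subtler argument.
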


\begin{proof}
We lose no generality in assuming that $V=\K^n$ and $b : (X,Y) \mapsto X^TY$. Set $q : X \mapsto b(X,X)$,
and notice that $q(x_1,\dots,x_n)=(x_1+\dots+x_n)^2$ for every $(x_1,\dots,x_n) \in \K^n$.
Since $q \circ u=q$ and $\car(\K)=2$, we deduce that the linear form $f : (x_1,\dots,x_n) \mapsto x_1+\dots+x_n$ satisfies $f \circ u=f$.
This proves that $1$ is an eigenvalue of the transposed endomorphism $u^T : (\K^n)^\star \rightarrow (\K^n)^\star$,
hence $1$ is an eigenvalue of $u$.
\end{proof}

\begin{Rem}\label{soleremark}
The result actually holds for an arbitrary field of characteristic $2$, with a subtler proof
(see exercise 17 in chapter XXXV of \cite{invitquad}).
\end{Rem}

We shall see that the converse is true, which leads to the following theorem:

\begin{theo}\label{perfectcar2}
Assume that $\K$ is perfect of characteristic $2$, and let $n \in \N^*$.
Let $A \in \GL_{2n}(\K)$.
The following conditions are equivalent:
\begin{enumerate}[(i)]
\item $A$ is similar to a matrix of $\Ortho_{2n}(\K)$;
\item $A$ is essentially symplectic and $1$ is an eigenvalue of $A$.
\end{enumerate}
\end{theo}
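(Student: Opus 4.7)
The plan is to handle the two implications by rather direct means, relying on the classification results already established.

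For (i) $\Rightarrow$ (ii): A matrix similar to an element of $\Ortho_{2n}(\K)$ is an isometry of a regular symmetric bilinear form congruent to the standard form $I_{2n}$, which is nonalternate and, in particular, symmetric. So $A$ is essentially bilin-orthogonal. Since $\dim V = 2n$ is even, the remark following Theorem \ref{orthotheoremcar2} (based on condition (iii) of Theorem \ref{bilinorthotheorem}) gives that being essentially bilin-orthogonal is the same as being essentially symplectic in this setting. The eigenvalue statement is then exactly the preceding proposition.

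For (ii) $\Rightarrow$ (i): Pick a nonsingular alternate matrix $S$ on $V = \K^{2n}$ such that $A$ is an isometry of $b_0 : (x,y) \mapsto x^T S y$. Since $1$ is an eigenvalue of $A$, it is also an eigenvalue of $A^T$, so we can pick a nonzero linear form $f \in V^*$ with $f \circ A = f$. Define $b(x,y) := b_0(x,y) + f(x) f(y)$. This is symmetric bilinear and clearly $A$-invariant. It is nonalternate because $b(x,x) = b_0(x,x) + f(x)^2 = f(x)^2$ is nonzero on any $x$ with $f(x) \neq 0$.

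The only point requiring care is the regularity of $b$. Suppose $x$ lies in the radical of $b$; then $b_0(x,\cdot) + f(x)\,f = 0$ as elements of $V^*$. Since $b_0$ is non-degenerate, there is a unique $u \in V$ with $b_0(u,\cdot) = f$, and from the previous equation we deduce $x = f(x)\,u$ (using $\car(\K)=2$). Applying $f$ and using $f(u) = b_0(u,u) = 0$ (because $b_0$ is alternate), we get $f(x) = f(x)\,f(u) = 0$, whence $x=0$. So $b$ is a regular nonalternate symmetric bilinear form for which $A$ is an isometry, and by the classification recalled at the start of Section \ref{car2bilinSec}, $b$ is congruent to the standard form $I_{2n}$; expressing $A$ in a basis orthonormal for $b$ yields a matrix of $\Ortho_{2n}(\K)$ similar to $A$. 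The trickiest step is precisely this regularity verification, but the computation is short once one notices that $f(u) = b_0(u,u)$ vanishes by the alternate property of $b_0$.
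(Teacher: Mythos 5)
Your proof is correct, and the forward implication (i) $\Rightarrow$ (ii) follows the same route as the paper (the proposition just before the theorem plus the remark after Theorem \ref{orthotheoremcar2}). The interesting direction is (ii) $\Rightarrow$ (i), where you depart substantially from the paper. The paper proceeds by an explicit case analysis on the Jordan blocks of $u$ at the eigenvalue $1$ (a block of size $1$, an even-sized block with a modified matrix from Proposition \ref{jordaneven}, or a pair of odd-sized blocks via Lemma \ref{pairoddjordan1}), building a nonalternate invariant symmetric matrix block by block. You instead give a direct perturbation argument: fix an $A$-invariant symplectic form $b_0$, pick a nonzero $f \in V^*$ fixed by the transpose action (available because $1$ is an eigenvalue), and set $b := b_0 + f\otimes f$. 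Symmetry, $A$-invariance and nonalternateness are immediate, and your regularity check is correct: if $x$ is in the radical, then $b_0(x,\cdot) = f(x)\,b_0(u,\cdot)$ where $u$ is the $b_0$-preimage of $f$; non-degeneracy of $b_0$ forces $x = f(x)\,u$, and $f(u) = b_0(u,u) = 0$ by alternateness kills $f(x)$ and hence $x$. (Equivalently, $\det(S + vv^T) = \det(S)\,(1 + v^T S^{-1} v) = \det(S)$ since $S^{-1}$ is alternate.) Your argument is shorter, needs no Jordan-form analysis, and in fact proves over an arbitrary field of characteristic $2$ that condition (ii) is equivalent to being orthogonal for some regular nonalternate symmetric form, with perfectness invoked only at the very end to identify that form with the standard one; the paper notes this generalization but defers it to a cited exercise. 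The trade-off is that the paper's explicit matrix constructions are reused in Section \ref{car2quadSec} for the Arf-invariant computations, so the longer route earns its keep there.
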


For an arbitrary field of characteristic $2$, condition (ii) characterizes the even-sized matrices that are orthogonal
for at least one regular nonalternate symmetric bilinear form (use Remark \ref{soleremark}).

\vskip 3mm
In order to prove the theorem, we consider an essentially bilin-orthogonal morphism $u \in \GL(V)$,
with $\dim V$ even, and assume that $1$ is an eigenvalue of $u$. We need to find a nonalternate regular symmetric bilinear form
$b$ on $V$ for which $u \in O(b)$. Notice that since $\dim V$ is even, three situations may arise:
\begin{enumerate}[(a)]
\item There is a Jordan block of size $1$ for the eigenvalue $1$ of $u$, in which case
the proof of implication (v) $\Rightarrow$ (i) in Theorem \ref{bilinorthotheorem} gives an explicit
construction of a nonalternate $b$ (the key point being that $\id_\K$ is orthogonal for the nonalternate form $(x,y) \mapsto xy$).
\item At least one Jordan block of $u$ for the eigenvalue $1$ is even-sized: in this case,
we define $A$ almost as in the proof of Proposition \ref{jordaneven}
but set $a_{2n,2n}=1$ instead of $a_{2n,2n}=0$. One checks that this yields
a regular nonalternate symmetric bilinear form for which $X \mapsto J_{2n}X$ is orthogonal.
Using the method of the proof of implication (v) $\Rightarrow$ (i) in Theorem \ref{bilinorthotheorem},
we find a well-suited $b$ for $u$.
\item There is an integer $k \geq 1$ such that $u$ has a Jordan block of size $2k+1$ for the eigenvalue $1$;
since $u$ is essentially bilin-orthogonal, it has an even number of such blocks (with $k$ fixed); rather than
use all these blocks to form a matrix of type $B \oplus B^{-1}$, we may then keep
a pair of them separated from the rest and try to prove directly that their direct sum is orthogonal for some
nonalternate regular symmetric bilinear form. If this is true, then the same arguments as before show
that we may find a well-suited $b$ for $u$.
\end{enumerate}

\noindent In order to conclude our proof of Theorem \ref{perfectcar2}, it suffices to establish the following lemma:

\begin{lemme}\label{pairoddjordan1}
Let $n \in \N^*$. Then there is a nonalternate nonsingular symmetric matrix $S \in \Mat_{4n+2}(\K)$
such that $J_{2n+1}(1) \oplus J_{2n+1}(1)$ is $S$-orthogonal.
\end{lemme}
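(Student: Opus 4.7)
The plan is to construct $S$ explicitly as a block matrix of size $4n+2$. Set $J := J_{2n+1}(1)$, so that $M := J \oplus J$ is the matrix we must ``essentially orthogonalize'' by a nonalternate form. Let $\Delta \in \Mat_{2n+1}(\K)$ denote the matrix whose only nonzero entry is a $1$ at position $(1,1)$. I propose the ansatz
$$S := \begin{bmatrix} \Delta & B \\ B^T & 0 \end{bmatrix},$$
where $B \in \GL_{2n+1}(\K)$ remains to be chosen, subject to the requirement $J^T B J = B$. By construction $S$ is symmetric, and $S_{1,1}=1$ makes $S$ nonalternate; so only the identities $M^T S M = S$ and the nonsingularity of $S$ need be checked.

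The first step is to verify that $J^T \Delta J = \Delta$. Since only the first column of $\Delta$ is nonzero, and right-multiplication by $J$ replaces column $j$ with the sum of columns $j$ and $j+1$, we have $\Delta J = \Delta$; the dual calculation on rows yields $J^T \Delta = \Delta$, and hence $J^T \Delta J = \Delta$.

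The second step is to produce a nonsingular $B$ satisfying $J^T B J = B$. The equation is equivalent to $BJ = J^{-T} B$, so $B$ must conjugate $J^{-T}$ to $J$. But $J$ and $J^{-T}$ have the same Jordan form, both being single Jordan blocks of size $2n+1$ with eigenvalue $1$: inversion of a unipotent Jordan block preserves the nilpotency index, and transposition preserves the Jordan form. Hence $J$ and $J^{-T}$ are similar, and any nonsingular conjugating matrix $B$ does the job.

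Once $\Delta$ and $B$ are in place, the identity $M^T S M = S$ follows from $J^T \Delta J = \Delta$, $J^T B J = B$, and the transposed identity $J^T B^T J = B^T$. Nonsingularity of $S$ is equally direct: if $S\begin{bmatrix} x \\ y \end{bmatrix} = 0$, the bottom block gives $B^T x = 0$, hence $x=0$ (as $B^T$ is nonsingular), and the top block then gives $By = 0$, hence $y=0$. The only substantive obstacle is producing $B$, but the abstract similarity $J \sim J^{-T}$ dispatches it cleanly. Should an explicit $B$ be desired, one may adapt the recursion of Proposition \ref{jordaneven} to size $2n+1$, dropping the symmetry requirement on $B$---the obstruction that forces a single symmetric block at an odd-sized Jordan cell to be singular simply disappears.
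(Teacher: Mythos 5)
Your proposal is correct, but it takes a genuinely different route from the paper. The paper conjugates $J_{2n+1}(1)\oplus J_{2n+1}(1)$ to $J_{2n+1}(1)\otimes I_2$, which interleaves the $4n+2$ coordinates into $2n+1$ pairs, and then builds $S$ as a $(2n+1)\times(2n+1)$ array of $2\times 2$ blocks by the same explicit anti-diagonal recursion already used in Propositions \ref{jordaneven} and \ref{jordanodd} (with a single corner block $I_2$ breaking alternation). You instead keep the natural two-block splitting, write $S=\begin{bmatrix}\Delta & B\\ B^T & 0\end{bmatrix}$ in $(2n+1)\times(2n+1)$ blocks, reduce the whole problem to finding any nonsingular $B$ with $J^TBJ=B$, and dispatch that by the abstract similarity $J\sim J^{-T}$. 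Your argument is shorter, more conceptual, and convention-free about the base field; the paper's explicit construction has the advantage of being the template that is reused almost verbatim in Section \ref{car2quadSec}, where an explicit $S$ is needed to compute the Arf invariant. One small notational point worth noting: your claims $\Delta J=\Delta$ and ``right-multiplication by $J$ sums columns $j$ and $j+1$'' presuppose the lower-triangular Jordan block (ones on the subdiagonal). The identities the paper derives for $J_{2n}(1)^TAJ_{2n}(1)=A$ in Proposition \ref{jordaneven} correspond to the upper-triangular convention, under which you should place the unique nonzero entry of $\Delta$ at $(2n+1,2n+1)$ rather than $(1,1)$. Since the two Jordan blocks are conjugate by the flip permutation (and this conjugation preserves symmetry, nonsingularity and nonalternateness of $S$), the lemma itself is convention-independent and your proof goes through after that cosmetic change.
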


\begin{proof}
We work with
$$J_{2n+1}(1) \otimes I_2=\begin{bmatrix}
I_2 & I_2 & & (0)  \\
0 & I_2 & \ddots & \\
 & & \ddots & I_2\\
(0) & & 0 & I_2
\end{bmatrix},$$
which is similar to $J_{2n+1}(1) \oplus J_{2n+1}(1)$.
We search for a suitable $S$ of the form
$$\begin{bmatrix}
S_{1,1} & S_{1,2} & \cdots & S_{1,2n+1} \\
S_{2,1} & S_{2,2} & \cdots & S_{2,2n+1} \\
\vdots & \vdots & \ddots & \vdots \\
S_{2n+1,1} & S_{2n+1,2} & \cdots & S_{2n+1,2n+1}
\end{bmatrix}$$
where the $S_{i,j}$'s are $2 \times 2$ matrices.
The condition that $J_{2n+1}(1) \otimes I_2$
is $S$-orthogonal is equivalent to having:
\begin{enumerate}[(i)]
\item $S_{i,j-1}+S_{i-1,j}+S_{i-1,j-1}=0$ for every $(i,j)\in \lcro 2,2n+1\rcro^2$;
\item $S_{k,1}=S_{1,k}=0$ for every $k\in \lcro 1,2n\rcro$.
\end{enumerate}
Set $K:=\begin{bmatrix}
0 & 1 \\
1 & 0
\end{bmatrix}$ and $L:=\begin{bmatrix}
0 & 0 \\
1 & 0
\end{bmatrix}$, and notice that $L+L^T=K$.
We define the $S_{i,j}$'s as follows:
\begin{itemize}
\item we set $S_{i,j}:=0$ whenever $i+j<2n+2$;
\item we set $S_{i,2n+2-i}:=K$ for every $i \in \lcro 1,2n+1\rcro$;
\item we set $S_{i,j}:=0$ whenever $i>n+1$, $j>n+1$ and $(i,j) \neq (2n+1,2n+1)$;
\item we set $S_{2n+1,2n+1}:=I_2$;
\item we set $S_{i,n+1}:=L$ whenever $i>n+1$;
\item we set $S_{n+1,j}:=L^T$ whenever $j>n+1$;
\item we then define (doubly)-inductively $S_{i,j}$ for $i$ from $n$ down to $2$ and for
$j$ from $2n-i+3$ up to $2n+1$ by $S_{i,j}:=-S_{i,j-1}-S_{i+1,j-1}$;
\item symmetrically, we define $S_{i,j}$ for $j$ from $n$ down to $2$ and for
$i$ from $2n-j+3$ up to $2n+1$ by $S_{i,j}:=-S_{i-1,j}-S_{i-1,j+1}$.
\end{itemize}
One checks that $S$ is symmetric, nonsingular, nonalternate (consider the last two diagonal entries)
and that $J_{2n+1}(1) \otimes I_2$ is $S$-orthogonal.
\end{proof}

\section{Refinements for quadratic forms over a finite field of characteristic 2}\label{car2quadSec}

In this section, $\K$ is a field of characteristic $2$.
If $\K$ is finite, there are two equivalence classes of regular quadratic forms of
dimension $2n$ over $\K$. We wish to determine, for a given essentially orthogonal automorphism $u \in \GL(\K^{2n})$,
the equivalence classes of the regular quadratic forms for which $u$ is orthogonal.

Since the theory of quadratic forms in characteristic $2$ is rather exotic, we start with a quick
reminder of some notations and basic facts.
\begin{itemize}
\item The map $\calP : x \mapsto x^2+x$ from $\K$ to $\K$ is a group homomorphism for $+$ with kernel $\{0,1\}$.
If $\K$ is finite, then its range is a subgroup of index $2$ of $\K$.
\item Given a regular quadratic form $q$ over a finite-dimensional $\K$-vector space, we choose a symplectic basis
of the polar form $b_q$: in this basis, $q$ is represented by a matrix of the form
$\begin{bmatrix}
A & I_n \\
0 & B
\end{bmatrix}$, and the class of $\tr(AB)$ in the quotient group $\K/\calP(\K) \simeq \Z/2$
is independent on the choices of the basis and of the matrices $A$ and $B$: this class, denoted by $\Delta(q)$,
is the \textbf{Arf invariant} of $q$. When $q$ is hyperbolic, its Arf invariant is $0$.
\item The Arf invariant is additive from $\bot$ to $+$, and when $\K$ is finite, it
classifies the regular quadratic forms of a given dimension up to equivalence.
\item In particular, for every $a \in \K$, the Arf invariant of the quadratic form
$[1,a]_\K : (x,y) \mapsto x^2+xy+ay^2$ on $\K^2$ is the class of $a$ in $\K/\calP(\K)$.
\end{itemize}

\subsection{A sufficient condition for being orthogonal for both types of regular quadratic forms}

Let $u \in \GL(V)$ be an essentially orthogonal automorphism.
We claim that if $1$ is an eigenvalue of $u$, then,
for any $\delta \in \K/\calP(\K)$, there is a regular quadratic form $q$ on $V$
with Arf invariant $\delta$ and for which $u$ is $q$-orthogonal.
Since Theorem \ref{symplectictheorem} shows that the odd-sized Jordan blocks of $u$ for the eigenvalue $1$ are paired,
it suffices to prove the following lemmas (the case of Jordan blocks of size one being trivial):

\begin{lemme}\label{evenjordancar2}
Let $n \in \N^*$ and $\delta \in \K/\calP(\K)$. Then there is a regular quadratic form on $\K^{2n}$
with Arf invariant $\delta$ for which $J_{2n}(1)$ is orthogonal.
\end{lemme}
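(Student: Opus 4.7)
The plan is to extend the construction from the proof of Proposition \ref{jordaneven}. Recall that the proof builds a nonsingular alternate matrix $A \in \Mat_{2n}(\K)$ with $J_{2n}(1)^T A J_{2n}(1) = A$ defining a symplectic form $b$, and then a quadratic form $q$ with polar form $b$ for which $u := X \mapsto J_{2n}(1) X$ is $q$-orthogonal, leaving $q(e_{2n})$ free. The goal is to find enough additional freedom in this construction to realize every Arf invariant.

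\paragraph{}
A first observation: merely varying the free parameter $q(e_{2n})$ will not suffice. Indeed, the linear form $\phi := e_{2n}^\star$ is the (unique, up to scalar) $u^T$-fixed linear form on $\K^{2n}$, so the modification $q \mapsto q + \lambda \phi^2$ preserves both the polar form and the orthogonality of $u$. However, a direct computation (easily checked for $n = 2$) shows that the coefficient of $\lambda$ in $\Delta(q + \lambda \phi^2) - \Delta(q) \pmod{\calP(\K)}$ vanishes for the specific $q$ coming out of the proof of Proposition \ref{jordaneven}, because $q(e_1) = 0$ and the other symplectic pairs lie in $\ker \phi$. Consequently, to change the Arf invariant one must vary the form $b$ itself.

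\paragraph{}
For $n = 1$, the lemma is immediate: the form $q_\lambda(x_1, x_2) := x_1^2 + x_1 x_2 + \lambda x_2^2 = [1, \lambda]_\K$ has Arf invariant $\lambda \bmod \calP(\K)$, and $J_2(1)$ is $q_\lambda$-orthogonal. For $n \geq 2$, I would exploit the fact that the linear system $J_{2n}(1)^T A J_{2n}(1) = A$ admits a multi-parameter family of alternate solutions. A direct inspection of the recursion $a_{i, j-1} + a_{i-1, j} + a_{i-1, j-1} = 0$ shows that the entry $a_{2n-1, 2n}$ is unconstrained: the only relation involving it is the tautology $a_{2n-1, 2n} + a_{2n, 2n-1} + a_{2n-1, 2n-1} = 0$, automatic in characteristic $2$. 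Starting from the matrix $A_0$ of Proposition \ref{jordaneven}, set $a_{2n-1, 2n} := \lambda$ (all other entries unchanged) to obtain a new alternate matrix $A_\lambda$; it remains nonsingular (the anti-diagonal argument of Proposition \ref{jordaneven} is insensitive to this entry) and still satisfies $J_{2n}(1)^T A_\lambda J_{2n}(1) = A_\lambda$. Following the recipe of Proposition \ref{jordaneven}, build the associated quadratic form $q_\lambda$ by $q_\lambda(e_{i-1}) := b_\lambda(e_i, e_{i-1})$ for $i \in \lcro 2, 2n \rcro$ and $q_\lambda(e_{2n}) := 0$.

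\paragraph{}
The remaining task is to show $\Delta(q_\lambda) \equiv \lambda + \Delta(q_0) \pmod{\calP(\K)}$, so that setting $\lambda := \delta - \Delta(q_0)$ gives the desired form. I would do this by constructing an explicit symplectic basis $(u_1, v_1, \ldots, u_n, v_n)$ for $b_\lambda$, beginning with $u_1 := e_1$, $v_1 := e_{2n}$ and then iterating into the orthogonal complement. The hard part will be the bookkeeping in $\Delta(q_\lambda) = \sum_i q_\lambda(u_i) q_\lambda(v_i)$: the $\lambda$-modification affects $q_\lambda(e_{2n-1}) = b_\lambda(e_{2n}, e_{2n-1}) = \lambda$, and the construction of $u_n, v_n$ in the orthogonal complement depends on $\lambda$ as well. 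Direct verification for $n = 2$, using the symplectic basis $(e_1, e_4, e_1 + e_2, \lambda e_1 + e_3)$, yields $\Delta(q_\lambda) = \lambda$ and suggests the general pattern: only the last symplectic pair carries the $\lambda$-dependence, and its contribution is precisely $\lambda$ modulo $\calP(\K)$.
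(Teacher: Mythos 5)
Your modification of the matrix from Proposition \ref{jordaneven} is itself sound: changing $a_{2n-1,2n}$ (and its mirror $a_{2n,2n-1}$) to $\lambda$ preserves nonsingularity, alternateness, and the relation $J_{2n}(1)^T A_\lambda J_{2n}(1)=A_\lambda$, since this entry enters only the tautological condition at $(i,j)=(2n,2n)$. But the conjectured formula $\Delta(q_\lambda)\equiv \lambda+\Delta(q_0)$ is \emph{false} for $n \geq 3$; the $n=2$ verification is a coincidence, and the approach breaks down precisely where you deferred the bookkeeping.

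To see why, run the hyperbolic-inflation argument the paper uses. Since $q_\lambda(e_{i-1})=b_\lambda(e_i,e_{i-1})=a^{(\lambda)}_{i,i-1}$ and the subdiagonal entries $a_{i+1,i}$ with $i<n$ lie strictly above the antidiagonal, we get $q_\lambda(e_1)=\cdots=q_\lambda(e_{n-1})=0$, so $W:=\Vect(e_1,\dots,e_{n-1})$ is totally $q_\lambda$-isotropic. Rows $1$ through $n-1$ of $A_\lambda$ are identical to those of $A_0$ (your change lives in rows $2n-1$ and $2n$), so $W^{\bot_{b_\lambda}}=\Vect(e_1,\dots,e_{n+1})$ exactly as for $A_0$. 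Hyperbolic inflation then gives $\Delta(q_\lambda)=\Delta\bigl(q_\lambda|_{\Vect(e_n,e_{n+1})}\bigr)$, which depends only on $q_\lambda(e_n)=a^{(\lambda)}_{n+1,n}=1$ and $q_\lambda(e_{n+1})=a^{(\lambda)}_{n+2,n+1}$. For $n=2$ one has $(n+2,n+1)=(4,3)=(2n,2n-1)$, exactly the entry you modified, whence $\Delta(q_\lambda)=[\lambda]$. For $n\geq 3$ one has $(n+2,n+1)\neq(2n,2n-1)$, so $a^{(\lambda)}_{n+2,n+1}=0$ and $\Delta(q_\lambda)=[0]$ for \emph{every} $\lambda$. (A direct check for $n=3$ with the symplectic basis $(e_1,e_6,\;e_2,e_1+e_5,\;e_4,e_1+e_2+e_3)$ confirms $\Delta(q_\lambda)=0$.) So your parameter never reaches the Arf invariant once $n\geq 3$.

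The paper repairs this by placing the parameter where the inflation argument actually sees it: the zero block is taken to be $\{i>n+1,\ j>n+1\}$ rather than $\{i>n,\ j>n\}$, with $a_{n+1,n+1}:=0$, $a_{i,n+1}:=a$ for $i>n+1$, and $a_{n+1,j}:=a$ for $j>n+1$. Then $q(e_{n+1})=a_{n+2,n+1}=a$, the residual form on $\Vect(e_n,e_{n+1})$ is $[1,a]_\K$, and $\Delta(q)=[a]$.
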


\begin{lemme}\label{pairoddjordan2}
Let $n \in \N^*$ and $\delta \in \K/\calP(\K)$. Then there is a regular quadratic form on $\K^{4n+2}$
with Arf invariant $\delta$ for which $J_{2n+1}(1) \oplus J_{2n+1}(1)$ is orthogonal.
\end{lemme}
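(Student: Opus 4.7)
The plan is to extend the block-matrix construction of Lemma \ref{pairoddjordan1} by threading through it a parameter $\alpha \in \K$ representing the class $\delta$ in $\K/\calP(\K)$, so that the resulting $u$-invariant quadratic form has Arf invariant $\delta$. Work with $u := J_{2n+1}(1) \otimes I_2 \in \GL_{4n+2}(\K)$ (which is similar to $J_{2n+1}(1) \oplus J_{2n+1}(1)$) and the basis $(e_{i,j})_{i \in \lcro 1, 2n+1 \rcro, j \in \{1, 2\}}$ of $V := \K^{4n+2}$, with $u(e_{i,j}) = e_{i-1, j} + e_{i, j}$ for $i \geq 2$ and $u(e_{1,j}) = e_{1,j}$. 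A regular $u$-invariant quadratic form on $V$ corresponds to a matrix $M \in \Mat_{4n+2}(\K)$ via $q(X) = X^T M X$ subject to $u^T M u + M$ being alternate (i.e.\ $q \circ u = q$) and $b := M + M^T$ being nonsingular (the polar form, automatically alternate in characteristic $2$).

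View $b$ as a $(2n+1) \times (2n+1)$ block matrix of $2 \times 2$ blocks $b_{i,j}$. Its $u$-invariance is equivalent to the recurrence $b_{i, j-1} + b_{i-1, j-1} + b_{i-1, j} = 0$ for $i, j \geq 2$ together with the boundary conditions $b_{1, k} = b_{k, 1} = 0$ for $k \leq 2n$ (the very equations solved in Lemma \ref{pairoddjordan1}). Its general solution exhibits $b(e_{n+1, 1}, e_{n+2, 1})$ and $b(e_{n+1, 2}, e_{n+2, 2})$ as free parameters; I set both equal to $\alpha$, while the other free entries are chosen as in Lemma \ref{pairoddjordan1} (anti-diagonal copies of $K := \begin{pmatrix} 0 & 1 \\ 1 & 0 \end{pmatrix}$ and the usual corrections) to keep $b$ nonsingular. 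The matrix $M$ is then defined as the strictly block-upper-triangular part of $b$, with diagonal entries prescribed by the relation $q(e_{i-1,j}) = b(e_{i-1,j}, e_{i,j})$ for $i \leq 2n$ (and the two truly free values $q(e_{2n+1, 1}), q(e_{2n+1, 2})$ set to zero). With these choices, $q(e_{i,j})$ vanishes for every $i \neq n+1$, while $q(e_{n+1, j}) = \alpha$ for $j = 1, 2$.

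The main obstacle is the Arf invariant computation: one must exhibit an explicit $b$-symplectic basis of $V$ and check directly that $\Delta(q) \equiv \alpha^2 \pmod{\calP(\K)}$, which then equals $\delta$ since $\alpha^2 - \alpha = \calP(\alpha) \in \calP(\K)$. The reason to expect only one pair to contribute $\alpha^2$ to the sum $\sum_i q(u_i) q(v_i)$ is that $q(e_{i, j}) = 0$ except at $i = n+1$, and the vectors $e_{n+1, 1}, e_{n+1, 2}$ are $b$-paired to each other with nonzero pairing controlled by the surviving off-diagonal block of the construction; thus an adapted symplectic basis can isolate their contribution $\alpha \cdot \alpha$ while all remaining pairs contribute zero. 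I have verified this computation by hand in the case $n = 1$; for general $n$ the same strategy applies, the bulk of the technical work lying in the explicit construction of the adapted symplectic basis and the careful bookkeeping that shows the remaining pairs contribute nothing.
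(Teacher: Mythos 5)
Your plan is the same as the paper's in all essentials: work with $M := J_{2n+1}(1) \otimes I_2$, build a nonsingular alternate matrix $S$ out of $2 \times 2$ blocks with the free parameter concentrated in the $(n+1)$-st row and column below the antidiagonal, and reduce the Arf invariant computation to the central pair $(e_{2n+1}, e_{2n+2})$. One cosmetic difference: the paper takes $L := \bigl[\begin{smallmatrix} a & 0 \\ 1 & 1 \end{smallmatrix}\bigr]$, so that $q(e_{2n+1}) = a$, $q(e_{2n+2}) = 1$, and $\Delta(q) = [a]$ comes out directly; your symmetric placement of $\alpha$ in both diagonal slots gives $\Delta(q) = [\alpha^2]$ and then needs the extra remark $\alpha^2 - \alpha = \calP(\alpha)$. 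Both are correct, but the asymmetric choice is slightly more economical.

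Where your argument is genuinely under-proved is the Arf reduction. Knowing $q(e_{i,j}) = 0$ for $i \neq n+1$ does not by itself isolate the contribution of $(e_{n+1,1}, e_{n+1,2})$: the Arf invariant must be read off a $b$-symplectic basis, the antidiagonal blocks $S_{i,2n+2-i} = K$ couple $e_{i,j}$ with $e_{2n+2-i,j'}$, so the $e_{i,j}$'s are \emph{not} paired up into hyperbolic planes, and a basis change is needed. The clean tool here---the one the paper uses, exactly as in Lemma \ref{evenjordancar2}---is the hyperbolic inflation principle. Observe that $W := \Vect(e_{i,j} : i \leq n)$ is totally $b$-singular (every block $S_{i,i'}$ with $i,i' \leq n$ lies strictly above the antidiagonal and is therefore zero) and hence totally $q$-isotropic, that $W^{\bot_b} = \Vect(e_{i,j} : i \leq n+1)$, and therefore that $q$ is Witt-equivalent to its restriction to $\Vect(e_{n+1,1}, e_{n+1,2})$, from which the Arf invariant is immediate. (You could not argue symmetrically from $\Vect(e_{i,j} : i > n+1)$: while it too is totally isotropic, its $b$-orthogonal is not as clean, because of the $L$-blocks and the recursively filled region.) Replace the promised explicit symplectic-basis bookkeeping by a citation of hyperbolic inflation, and your proof is complete.
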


\begin{proof}[Proof of Lemma \ref{evenjordancar2}]
Denote by $(e_1,\dots,e_{2n})$ the canonical basis of $\K^{2n}$. \\
If $n=1$, we remark that any orthogonal group contains a reflection
and that every reflection of a $2$-dimensional vector space over $\K$ is represented by the matrix $J_2(1)$. \\
Assume now that $n \geq 2$, and let $a \in \K$. \\
We define $A=(a_{i,j}) \in \Mat_{2n}(\K)$ as follows:
\begin{itemize}
\item we set $a_{i,j}:=0$ whenever $i+j<2n+1$;
\item we set $a_{i,2n+1-i}:=1$ for every $i \in \lcro 1,2n\rcro$;
\item we set $a_{i,j}:=0$ whenever $i>n+1$ and $j>n+1$;
\item we set $a_{n+1,n+1}:=0$;
\item we set $a_{i,n+1}:=a$ whenever $i>n+1$, and  $a_{n+1,j}:=a$ whenever $j>n+1$;
\item we then define (doubly)-inductively $a_{i,j}$ for $i$ from $n$ down to $2$ and for
$j$ from $2n-i+2$ up to $2n$ by $a_{i,j}:=-a_{i,j-1}-a_{i+1,j-1}$;
\item symmetrically, we define $a_{i,j}$ for $j$ from $n$ down to $2$ and for
$i$ from $2n-j+2$ up to $2n$ by $a_{i,j}:=-a_{i-1,j}-a_{i-1,j+1}$.
\end{itemize}
The matrix $A$ is nonsingular and alternate, and $J_{2n}(1)^TAJ_{2n}(1)=A$.
Define then $q$ as the unique quadratic form on $\K^{2n}$ with polar form $b : (X,Y) \mapsto X^TAY$
and such that $q(e_{2n})=0$ and $q(e_i)=a_{i,i+1}$ for every $i \in \lcro 1,2n-1\rcro$.
Then $X \mapsto J_{2n}(1)X$ is $q$-orthogonal and $q(e_i)=0$ for every $i \in \lcro 1,n-1\rcro$. It follows that
$\Vect(e_1,\dots,e_{n-1})$ is totally $q$-isotropic. Using the hyperbolic inflation
principle (see \cite[Chapter VII, Proposition 3.2.5]{invitquad}) , we find that $q$ is Witt-equivalent
(see \cite[Chapter IX, Definition 1.0.26]{invitquad})
to its restriction $q'$ to $\Vect(e_n,e_{n+1})$
(notice that $\Vect(e_1,\dots,e_{n+1})$ is the orthogonal of $\Vect(e_1,\dots,e_{n-1})$ for $b$).
However $q(e_n)=b(e_n,e_{n+1})=1$ and $q(e_{n+1})=b(e_{n+1},e_{n+2})=a$, and
$(e_n,e_{n+1})$ is a symplectic basis of $\Vect(e_n,e_{n+1})$, hence
$q' \simeq [1,a]_\K$. We deduce that $\Delta(q)=[a]$, which completes the proof.
\end{proof}

\begin{proof}[Proof of Lemma \ref{pairoddjordan2}]
The strategy is quite similar to that of our proof of Lemma \ref{pairoddjordan1}.
We work with $M:=J_{2n+1}(1) \otimes I_2$.
We let $a \in \K$ and we find a nonsingular alternate matrix $S$ of the form
$$\begin{bmatrix}
S_{1,1} & S_{1,2} & \cdots & S_{1,2n+1} \\
S_{2,1} & S_{2,2} & \cdots & S_{2,2n+1} \\
\vdots & \vdots & \ddots & \vdots \\
S_{2n+1,1} & S_{2n+1,2} & \cdots & S_{2n+1,2n+1}
\end{bmatrix}$$
such that $M^TSM=S$, where the $S_{i,j}$'s are $2 \times 2$ matrices.
Set $K:=\begin{bmatrix}
0 & 1 \\
1 & 0
\end{bmatrix}$ and $L:=\begin{bmatrix}
a & 0 \\
1 & 1
\end{bmatrix}$, and notice that $L+L^T=K$.
We then define the $S_{i,j}$'s as follows:
\begin{itemize}
\item we set $S_{i,j}:=0$ whenever $i+j<2n+2$;
\item we set $S_{i,2n+2-i}:=K$ for every $i \in \lcro 1,2n+1\rcro$;
\item we set $S_{i,j}:=0$ whenever $i>n+1$ and $j>n+1$;
\item we set $S_{i,n+1}:=L$ whenever $i>n+1$;
\item we set $S_{n+1,j}:=L^T$ whenever $j>n+1$;
\item we then define (doubly)-inductively $S_{i,j}$ for $i$ from $n$ down to $2$ and for
$j$ from $2n-i+3$ up to $2n+1$ by $S_{i,j}:=-S_{i,j-1}-S_{i+1,j-1}$;
\item symmetrically, we define $S_{i,j}$ for $j$ from $n$ down to $2$ and for
$i$ from $2n-j+3$ up to $2n+1$ by $S_{i,j}:=-S_{i-1,j}-S_{i-1,j+1}$.
\end{itemize}
One checks that $S$ is alternate and nonsingular, and that $M^TSM=S$.
Denote by $(e_1,\dots,e_{4n+2})$ the canonical basis of $\K^{4n+2}$,
and let $q$ be an arbitrary quadratic form on $\K^{4n+2}$ with polar form $b : (X,Y) \mapsto X^TSY$.
Then $M$ is $q$-orthogonal if and only if $q(e_k)=q(e_k+e_{k-2})$ for every $k \in \lcro 3,4n+2\rcro$, i.e.,
$q(e_k)=b(e_k,e_{k+2})$ for every $k \in \lcro 1,4n\rcro$.

In this case, $q(e_1)=q(e_2)=\cdots=q(e_{2n})=0$, and since
$\Vect(e_1,\dots,e_{2n})$ is totally $b$-singular, it follows that it is also totally $q$-isotropic.
As in the previous proof, the hyperbolic inflation principle ensures that $q$ is Witt-equivalent to
its restriction $q'$ to $\Vect(e_{2n+1},e_{2n+2})$. However $q(e_{2n+1})=b(e_{2n+1},e_{2n+3})=a$
and $q(e_{2n+2})=b(e_{2n+2},e_{2n+4})=1$, and $(e_{2n+1},e_{2n+2})$ is a $b$-symplectic basis of
 $\Vect(e_{2n+1},e_{2n+2})$, hence $\Delta(q')=[a]$. We deduce that $\Delta(q)=[a]$, which completes the proof.
\end{proof}

\begin{cor}Assume that $\K$ is finite and
let $u \in \GL(V)$ be an essentially symplectic morphism. If $1$ is an eigenvalue of $u$,
then there exists an hyperbolic form on $V$ for which $u$ is orthogonal
and a regular non-hyperbolic quadratic form on $V$ for which $u$ is orthogonal.
\end{cor}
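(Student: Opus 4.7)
The plan is to invoke directly the claim established just before Lemmas \ref{evenjordancar2} and \ref{pairoddjordan2}: if $u$ is essentially symplectic and $1$ is an eigenvalue of $u$, then for every class $\delta \in \K/\calP(\K)$ there is a regular quadratic form $q$ on $V$ with $\Delta(q)=\delta$ for which $u$ is $q$-orthogonal. Indeed, by Theorem \ref{orthotheoremcar2}, essentially symplectic and essentially orthogonal coincide in characteristic $2$, so this claim applies to our $u$ once we recast it in terms of quadratic forms.

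Next I would specialize to the finite case. Since $u$ is essentially symplectic, $\dim V$ is even; over a finite field of characteristic $2$, the Arf invariant takes exactly two values ($|\K/\calP(\K)|=2$) and classifies the regular quadratic forms of a given even dimension up to equivalence. The class $\delta=0$ corresponds precisely to the hyperbolic form, and the other class to the unique non-hyperbolic equivalence class. Applying the claim with $\delta=0$ yields a regular quadratic form $q_0$ with $\Delta(q_0)=0$, hence hyperbolic, for which $u$ is orthogonal; applying the claim with the nonzero class of $\K/\calP(\K)$ produces a regular non-hyperbolic form $q_1$ for which $u$ is orthogonal. Both conclusions of the corollary follow.

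The main obstacle has already been dealt with in proving the claim itself: one decomposes $u$ via Theorem \ref{symplectictheorem}(v) as $B \oplus B^{-1} \oplus C$, and splits $C$ further according to its Jordan structure at the eigenvalue $1$. The $B\oplus B^{-1}$ summand yields a hyperbolic form by the construction of Section \ref{BplusB-1} (contributing $0$ to the Arf invariant), while Lemmas \ref{evenjordancar2} and \ref{pairoddjordan2} provide forms with \emph{any} prescribed Arf invariant on the even-sized and paired odd-sized Jordan-block pieces at the eigenvalue $1$; Jordan blocks of size one at $1$ are trivial to handle since the identity is orthogonal for every quadratic form, so one may attach any regular form (with any Arf invariant) to the corresponding summand. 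The additivity of the Arf invariant under orthogonal direct sums then lets us realize any total value of $\delta$, using the assumed presence of at least one Jordan block for the eigenvalue $1$ to furnish the "degree of freedom" needed.
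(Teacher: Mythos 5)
Your proof is correct and takes essentially the same approach as the paper: the corollary follows immediately from the claim stated just before Lemmas~\ref{evenjordancar2} and~\ref{pairoddjordan2} (that any Arf invariant in $\K/\calP(\K)$ can be prescribed once $1$ is an eigenvalue), combined with the fact that over a finite field the Arf invariant classifies regular quadratic forms of a given even dimension, with $0$ giving the hyperbolic class. Your recap of how the claim itself is established --- decomposing via Theorem~\ref{symplectictheorem}(v), using the hyperbolic form from Section~\ref{BplusB-1} on $B \oplus B^{-1}$, the two lemmas on the even-sized and paired odd-sized Jordan blocks at $1$, the triviality of paired size-one blocks, and additivity of the Arf invariant --- matches the paper's reasoning.
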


\subsection{On essentially symplectic morphisms for which $1$ is not an eigenvalue}

In this section, we assume that $\K$ is finite and we
consider an essentially symplectic morphism $u$ of which $1$ is not an eigenvalue.
We intend to prove that all the quadratic forms for which $u$ is orthogonal are equivalent: we do so by calculating their Arf invariant.
Let $q$ be a regular quadratic form for which $u$ is orthogonal
(we say that $q$ is \textbf{$u$-adapted}), and denote by $b_q$ its polar form.

\begin{lemme}\label{totallyisotropic}
Let $W$ be a linear subspace of $V$ which is both totally $b_q$-singular
and stabilized by $u$. Then $q$ vanishes on $W$.
\end{lemme}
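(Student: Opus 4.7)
The plan is to exploit two facts: the hypothesis that $1$ is not an eigenvalue of $u$, and the characteristic-$2$ identity $q(u(x)+x)=q(u(x))+q(x)+b_q(u(x),x)$.

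First, I would observe that since $1$ is not an eigenvalue of $u$, the endomorphism $u-\id_V$ is an automorphism of $V$. Because $W$ is $u$-stable and finite-dimensional, $(u-\id_V)|_W$ is injective on $W$ and is therefore an automorphism of $W$. Consequently, every $y\in W$ can be written as $y=u(x)-x$ for some $x\in W$, and since $\car(\K)=2$ we may rewrite this as $y=u(x)+x$.

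Next, using the definition of the polar form of $q$, I would compute
$$q(y)=q(u(x)+x)=q(u(x))+q(x)+b_q(u(x),x).$$
Now $q$ is $u$-adapted, so $q(u(x))=q(x)$; hence $q(u(x))+q(x)=2\,q(x)=0$. Moreover, both $u(x)$ and $x$ lie in $W$ (using again the stability of $W$ under $u$), and $W$ is totally $b_q$-singular, so $b_q(u(x),x)=0$. Therefore $q(y)=0$, which is the desired conclusion.

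The argument is quite short and involves no real obstacle; the only point worth flagging is that one really needs the assumption that $1$ is not an eigenvalue of $u$ in order to parametrize arbitrary elements of $W$ as $u(x)+x$ with $x\in W$. Without that, one would only be able to conclude that $q$ vanishes on $(u-\id_V)(W)$, which in general is a proper subspace of $W$.
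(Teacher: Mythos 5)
Your proof is correct and is essentially identical to the one in the paper: both use that $u-\id$ restricts to an automorphism of $W$ (since $1$ is not an eigenvalue of $u$) and then expand $q((u-\id)(x))=q(u(x))+q(x)+b_q(x,u(x))$, which vanishes because $q$ is $u$-adapted and $W$ is totally $b_q$-singular. The only difference is that you spell out the intermediate cancellations that the paper leaves implicit.
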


\begin{proof}
Indeed, for every $x \in W$, one has
$q((u-\id)(x))=q(u(x))+q(x)+b_q(x,u(x))=0$, and the result follows since $u-\id$ is an automorphism of the
finite-dimensional vector space $W$.
\end{proof}

We now split up the minimal polynomial of $u$ as
$$\mu_u=Q\,Q^\#\,P_1^{a_1}\cdots P_p^{a_p},$$
where $Q$ is prime with $Q^\#$, and $P_1,\dots,P_p$ are pairwise distinct irreducible palindromials with a degree greater than $1$
(and the $a_i$'s are positive integers).
Both $Q$ and $Q^\#$ are prime with each $P_k$.
The subspaces $W:=\Ker(Q\,Q^\#)(u), V_1:=\Ker P_1^{a_1}(u),\dots,V_p:=\Ker P_p^{a_p}(u)$ must then be pairwise $q$-orthogonal, hence
$$q \simeq q_W \bot q_{V_1} \bot \dots \bot q_{V_p.}$$
Notice that $q_W$ is hyperbolic: indeed $\Ker Q(u)$ and $\Ker Q^\#(u)$ are both totally $b_q$-singular
and stabilized by $u$, so the previous lemma shows that $q$ vanishes on both of them.
It follows that
$$\Delta(q)=\underset{k=1}{\overset{p}{\sum}} \Delta(q_{V_k}).$$
It now suffices to investigate the case $\mu_u$ is a power of an irreducible palindromial $P$ with $\deg P >1$:

\begin{prop}\label{onedivisorcar2}
Let $P \in \K[x]$ be an irreducible palindromial of degree greater than $1$.
Let $u \in \GL(V)$ be an automorphism whose minimal polynomial is a power of $P$, and let $q$
be a regular $u$-adapted quadratic form on $V$.
Denote by $N$ the number of blocks of type $C(P^{2a+1})$ (with $a \in \N$) in the primary canonical form of
$u$. Then $\Delta(q)=N.[\varepsilon]$, where $\varepsilon \in \K \setminus \calP(\K)$.
\end{prop}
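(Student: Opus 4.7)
The plan is to decompose $V$ $b_q$-orthogonally into cyclic $\K[u]$-submodules and then compute the Arf invariant block by block via an inductive hyperbolic-inflation argument modeled on the proofs of Lemmas \ref{evenjordancar2} and \ref{pairoddjordan2}. The first task will be to establish a decomposition $V = \bigoplus_i V_i$ where each $V_i$ is a cyclic $\K[u]$-submodule of minimal polynomial $P^{a_i}$ and the $V_i$'s are pairwise $b_q$-orthogonal. I would prove this by induction on $\dim V$: producing a $b_q$-regular cyclic $\K[u]$-submodule of maximal exponent, then passing to its $b_q$-orthogonal complement, which is automatically $u$-invariant and $b_q$-regular. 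Extracting the cyclic submodule is the delicate point and relies on the $\L$-vector-space structure on $V$ coming from Cohen's structure theorem (applicable since $\L/\K$ is separable) together with the classification of hermitian forms over $\L/\K'$. By additivity of the Arf invariant, the proposition then reduces to showing that, for any cyclic $\K[u]$-module $W$ with minimal polynomial $P^a$ equipped with a $u$-adapted regular quadratic form $q_{(a)}$, one has $\Delta(q_{(a)}) = [\varepsilon]$ if $a$ is odd and $\Delta(q_{(a)}) = 0$ if $a$ is even.

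The Arf invariant $\Delta(q_{(a)})$ is a well-defined element of $\K/\calP(\K)$ depending only on $a$: since $1$ is not a root of $P$, the argument of Section \ref{redtounipotent} shows $q_{(a)}$ is uniquely determined by its polar form $b$, and $b$ itself is unique up to equivalence by $u$-commuting automorphisms via the correspondence with the associated hermitian form over $\L/\K'$; this hermitian form is unique up to unitary equivalence over finite fields because the norm $N_{\L/\K'} \colon \L^* \to \K'^*$ is surjective. The computation of $\Delta(q_{(a)})$ then proceeds by induction on $a$. For $a=1$, identifying $W$ with $\L$ and $u$ with multiplication by $y$ makes $q_{(1)}$ take the form $\alpha \mapsto \Tr_{\K'/\K}(c\,N_{\L/\K'}(\alpha))$ for some $c \in \K'^*$; a direct count of isotropic vectors yields $|q_{(1)}^{-1}(0)| = |\K|^{2m-1} - |\K|^m + |\K|^{m-1}$ with $2m = [\L:\K]$, matching the non-hyperbolic type and giving $\Delta(q_{(1)}) = [\varepsilon]$. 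For $a=2$, the alternate matrix $N$ from Proposition \ref{jordaneven} can be taken as hermitian matrix of $h$; the $\L$-subspace $\{(x,0) : x \in \L\}$ of $W \simeq \L^2$ is then totally $q_{(2)}$-isotropic of half $\K$-dimension, so $q_{(2)}$ is hyperbolic and $\Delta(q_{(2)}) = 0$.

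For the inductive step $a \ge 3$, set $W_0 := P^{a-1} W = \Ker P(u|_W)$. Since $P^\# = P$, Proposition \ref{kerorthoim} yields $W_0^\bot = P(u)(W) = PW$, and the inclusion $W_0 \subseteq PW$ (valid for $a \ge 2$) makes $W_0$ totally $b$-isotropic; Lemma \ref{totallyisotropic} then gives $q_{(a)}|_{W_0} = 0$. The hyperbolic inflation principle identifies $q_{(a)}$, up to Witt equivalence, with the form induced on $PW/W_0$, which is a cyclic $\K[u]$-module with minimal polynomial $P^{a-2}$ carrying a $u$-adapted regular quadratic form; by uniqueness this induced form is equivalent to $q_{(a-2)}$, so $\Delta(q_{(a)}) = \Delta(q_{(a-2)})$, completing the induction. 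The main obstacle is the initial $b_q$-orthogonal decomposition into cyclic submodules: in contrast with the unipotent case of Section \ref{decorsection}, there is no explicit filtration at hand, and the cleanest route I see to extract a $b_q$-regular cyclic submodule passes through the $\L$-vector-space structure and the hermitian-form classification over finite fields.
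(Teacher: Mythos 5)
Your proposal takes a genuinely different route from the paper. The paper does not decompose $V$ into orthogonal cyclic pieces; instead, after reducing to the case where the minimal polynomial of $u$ is a power of $P$, it re-runs the filtration machinery of Section~\ref{decorsection} on all of $V$ at once (with $P(u)$ in place of $u-\id$), identifies the totally isotropic subspaces $E$ and then $H$, and uses hyperbolic inflation to show $q$ is Witt-equivalent to its restriction to the ``middle layers of odd-sized blocks'' $F$. The induced action on $F$ is killed by $P$, so $F$ is an $\L$-vector space of dimension $N$, and only at that point is the hermitian-form description (Lemma~\ref{reprbilin}), Gaussian reduction, and the single Arf computation of Claim~\ref{lastclaim} brought in. The hermitian formalism is thus invoked only when the minimal polynomial is already $P$.

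Your plan instead rests on a $b_q$-orthogonal decomposition of $V$ into \emph{cyclic} $\K[u]$-submodules. Given such a decomposition, the rest of your argument is sound: the identity $W_0^\bot = PW$ for $W_0 = P^{a-1}W$ is correct (it uses $P^\# = P$ and cyclicity of $W$), your isotropic-vector count for $a=1$ matches the elliptic count $|\K|^{2m-1}-|\K|^m+|\K|^{m-1}$, and the $a=2$ Lagrangian $\{(x,0)\}$ works. But the decomposition itself is a genuine gap, not a detail. You acknowledge it as ``the main obstacle'' and point towards a lift of $\L$ into $\K[x]/(P^a)$ and a classification of hermitian forms; the paper's Lemma~\ref{reprbilin}, however, only sets up the hermitian correspondence when the minimal polynomial is $P$, and extending the involution $\sigma$ and a theory of regular hermitian forms to the local ring $\K[x]/(P^a)$ for $a>1$ is substantial new work nowhere in the paper. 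The decomposition is also load-bearing and cannot be routed around: for a non-cyclic $W$ of top exponent $a$ one has $P^{a-1}W \subsetneq \Ker P(u|_W)$, so $(P^{a-1}W)^\bot = \Ker P^{a-1}(u|_W)$ and the quotient only drops the exponent of the top-exponent blocks, not all of them, and the parity bookkeeping does not close. A secondary soft spot is the well-definedness of $\Delta(q_{(a)})$ for $a>1$, which leans on the same undeveloped classification; this one can be avoided by phrasing the induction as a statement about \emph{all} $u$-adapted forms on a cyclic module, with $a=0$ and $a=1$ as base cases, so that the $a=2$ case and the uniqueness claim drop out. In short: given the orthogonal decomposition your computation is correct and pleasantly concrete, but the decomposition is exactly the hard point, and the paper's filtration argument is designed precisely to avoid ever needing it.
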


Before proving this result, we immediately deduce our final theorem:

\begin{theo}\label{subtilquadformscar2}
Assume that $\K$ is a finite field of characteristic $2$.
Let $u \in \GL(V)$ be an essentially symplectic automorphism,
and denote by $N$ the number of blocks of type $C(P^{2a+1})$, with $a \in \N$ and $P$ an irreducible palindromial of degree greater than $1$,
in the primary canonical form of $u$.
\begin{enumerate}[(a)]
\item If $1$ is an eigenvalue of $u$, then there is an hyperbolic form $q_1$ on $V$ and a regular non-hyperbolic form $q_2$ on $V$
such that $u \in \Ortho(q_1) \cap \Ortho(q_2)$.
\item If $1$ is not an eigenvalue of $u$ and $N$ is even, then every $u$-adapted regular quadratic form on $V$
is hyperbolic.
\item If $1$ is not an eigenvalue of $u$ and $N$ is odd, then every $u$-adapted regular quadratic form on $V$ is non-hyperbolic.
\end{enumerate}
\end{theo}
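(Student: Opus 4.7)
My plan is to dispatch part (a) immediately as the Corollary stated just before Proposition \ref{onedivisorcar2}, since that Corollary gives, for any essentially symplectic $u$ having $1$ as an eigenvalue, both a hyperbolic and a regular non-hyperbolic $u$-adapted form. So the real work lies in (b) and (c), where I must compute the Arf invariant of an arbitrary $u$-adapted regular quadratic form $q$ on $V$ under the assumption that $1$ is not an eigenvalue of $u$.

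For this, I would first fix $q$ and apply the splitting already set up in the preceding discussion. Since $\car(\K)=2$ we have $x+1=x-1$, so the hypothesis that $1$ is not an eigenvalue of $u$ rules out any linear palindromial factor of $\mu_u$. As $u$ is essentially symplectic, Theorem \ref{symplectictheorem} makes every elementary factor of $u$ a palindromial, so in the factorization $\mu_u=QQ^\# P_1^{a_1}\cdots P_p^{a_p}$ of that discussion every irreducible palindromial $P_i$ has degree at least $2$. The orthogonal decomposition $q\simeq q_W \perp q_{V_1} \perp \cdots \perp q_{V_p}$, with $W:=\Ker (QQ^\#)(u)$ and $V_k:=\Ker P_k^{a_k}(u)$, is already derived from Proposition \ref{kerorthoim} and its corollary, and $q_W$ is already shown there to be hyperbolic. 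Hence $\Delta(q_W)=0$, and by additivity of the Arf invariant,
$$\Delta(q)=\sum_{k=1}^p \Delta(q_{V_k}).$$

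The next step feeds each restriction $u|_{V_k}$ into Proposition \ref{onedivisorcar2}: its minimal polynomial is the power $P_k^{a_k}$ of an irreducible palindromial of degree $>1$, and $q_{V_k}$ is $u|_{V_k}$-adapted and regular, so $\Delta(q_{V_k})=N_k\cdot [\varepsilon]$ where $N_k$ counts the blocks $C(P_k^{2a+1})$ in the primary canonical form of $u|_{V_k}$. The step requiring the most care is identifying $N=\sum_k N_k$: one must check that the irreducible factors of $Q$ and $Q^\#$ are themselves not palindromials, so that the $W$-part contributes no block of the relevant type to $N$; this is forced by the very way the preceding decomposition pulls every palindromial factor of $\mu_u$ into the $P_i^{a_i}$, but it is worth flagging explicitly. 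Granting this, $\Delta(q)=N\cdot [\varepsilon]$, and since $\K$ is finite of characteristic $2$, $\K/\calP(\K)\simeq \Z/2\Z$ with $[\varepsilon]$ its non-zero element, so $\Delta(q)=0$ precisely when $N$ is even; as the hyperbolic regular quadratic forms over a finite field of characteristic $2$ are exactly those of vanishing Arf invariant, parts (b) and (c) follow simultaneously.
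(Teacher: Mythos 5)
Your proof is correct and follows essentially the same route as the paper: part (a) via the preceding Corollary, then the splitting $q\simeq q_W\perp q_{V_1}\perp\cdots\perp q_{V_p}$ with $q_W$ hyperbolic, additivity of the Arf invariant, and Proposition~\ref{onedivisorcar2} applied on each $V_k$. The verification steps you flag (no linear palindromial factor when $1$ is not an eigenvalue, and that $W$ contributes nothing to $N$) are exactly the implicit points the paper glosses over, so your write-up is a faithful, slightly more explicit rendering of the same argument.
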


We now turn to the proof of Proposition \ref{onedivisorcar2}.
To start with, we use essentially the same method as in the proof of Proposition \ref{decortiquage}
(see Section \ref{decorsection}). With the same notations,
we replace the endomorphism $u-\id$ with $P(u)$ and add the condition that the subspace
$\underset{i=p}{\overset{k}{\bigoplus}}V_{k,i}$ be stabilized by $u$ for every
$k \in \lcro 1,2n\rcro$ and every $p \in \lcro 1,k\rcro$. Finally, for every $x \in F \oplus G \oplus H$,
$v(x)$ is defined as the unique vector of $F \oplus G \oplus H$ such that $v(x)-u(x) \in E$
(that we may decompose $V$ into the sum of the $V_{k,i}$'s is a classical consequence of the generalized Jordan reduction
theorem).

Notice that $\Ker P(v)=F\oplus H$ and $\im P(v)=H$.
The rest of the arguments of Section \ref{decorsection} hold in this new context,
which shows that $E$ is totally $b_q$-singular, hence totally $q$-isotropic by Lemma \ref{totallyisotropic}.
Applying again Lemma \ref{totallyisotropic} to $v$ on $H$, we find that $H$ is totally $q$-isotropic.
The hyperbolic inflation theorem then ensures that $q$ is Witt-equivalent to $q_F$,
which leads to $\Delta(q)=\Delta(q_F)$.

We have thus reduced the situation to the one where $P$ is the minimal polynomial of $u$, which we now consider.
As in Section \ref{finitecompanion}, we set $\L:=\K[x]/(P(x))$, denote by $y$ the class of the indeterminate $x$ in $\L$,
by $\sigma$ the $\K$-automorphism of $\L$ such that $\sigma(y)=y^{-1}$,
and we set $\K':=\{z \in \L : \; \sigma(z)=z\}$. Notice that $u$ induces a structure of $\L$-vector space on $V$.
This reduces the situation to the one where $V=\L^n$ for some $n \geq 1$, and $u$ is the multiplication by $y$ in the vector space $\L^n$.

\begin{lemme}\label{reprbilin}
Let $B$ be a symmetric bilinear form on the $\K$-vector space $\L$ such that $B(ya,yb)=B(a,b)$ for every $(a,b)\in \L^2$.
Then there is a (unique) $c \in \L$ such that $B(a,b)=\Tr_{\L/\K}(c\,\sigma(a)b)$ for every $(a,b)\in \L^2$.
\end{lemme}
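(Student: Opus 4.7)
The plan is to represent every $\K$-bilinear form on $\L$ via the trace form $T(a,b):=\Tr_{\L/\K}(ab)$, and then to translate the invariance condition $B(ya,yb)=B(a,b)$ into a transparent condition on the associated $\K$-linear endomorphism of $\L$.

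First I would observe that, $\K$ being finite, the extension $\L/\K$ is separable and the trace $\Tr_{\L/\K}:\L\to \K$ is a nonzero $\K$-linear form; hence the symmetric $\K$-bilinear form $T$ is non-degenerate on $\L$. By the standard duality attached to a non-degenerate bilinear form, for every $\K$-bilinear form $B$ on $\L$ there exists a unique $\K$-linear endomorphism $\varphi$ of $\L$ such that $B(a,b)=\Tr_{\L/\K}\bigl(\varphi(a)\,b\bigr)$ for all $a,b\in \L$. The uniqueness of $c$ in the lemma will ultimately reduce to the uniqueness of $\varphi$.

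Next I would translate the hypothesis $B(ya,yb)=B(a,b)$. Rewriting the left-hand side as $\Tr_{\L/\K}\bigl(y\,\varphi(ya)\,b\bigr)$ and invoking non-degeneracy of $T$ in the second variable yields $y\,\varphi(ya)=\varphi(a)$, that is $\varphi(ya)=y^{-1}\varphi(a)=\sigma(y)\,\varphi(a)$. A straightforward induction on $k$ then gives $\varphi(y^k a)=\sigma(y)^k\,\varphi(a)=\sigma(y^k)\,\varphi(a)$ for every $k\in \N$ and every $a\in \L$. Since $\L=\K[y]$ and both $\varphi$ and $\sigma$ are $\K$-linear, this extends by linearity to $\varphi(za)=\sigma(z)\,\varphi(a)$ for every $z\in \L$.

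Finally I would set $c:=\varphi(1)\in \L$; specializing $a=1$ in the displayed identity gives $\varphi(z)=\sigma(z)\,c$ for all $z\in \L$, whence
\[
B(a,b)=\Tr_{\L/\K}\bigl(\sigma(a)\,c\,b\bigr)=\Tr_{\L/\K}\bigl(c\,\sigma(a)\,b\bigr),
\]
as required. I do not anticipate any serious obstacle: the only mildly delicate step is checking that the identity $\varphi(ya)=\sigma(y)\,\varphi(a)$ propagates from $y$ to all of $\L$, which is handled by the induction above together with the equality $\L=\K[y]$. Note that the symmetry hypothesis on $B$ is not actually used here; it would only serve afterwards, to force $c$ to lie in the fixed subfield $\K'$.
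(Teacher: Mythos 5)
Your proof is correct. The overall architecture is the same as the paper's: both start from the non-degeneracy of the trace form $T(a,b)=\Tr_{\L/\K}(ab)$ to write $B(a,b)=\Tr_{\L/\K}(\varphi(a)b)$ for a unique $\K$-endomorphism $\varphi$ of $\L$, and both translate the invariance hypothesis into the relation $y\,\varphi(ya)=\varphi(a)$. Where you diverge is in the final step, showing that $\varphi$ must be $z\mapsto c\,\sigma(z)$. The paper invokes the Galois-theoretic fact that $\operatorname{End}_\K(\L)$ is a free $\L$-module with basis $\Gal(\L/\K)$ (a consequence of Dedekind's independence theorem plus a dimension count), expands $\varphi=\sum_\tau\lambda_\tau\,\tau$, and matches coefficients to kill every $\lambda_\tau$ with $\tau\neq\sigma$. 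You instead iterate the relation to get $\varphi(y^k a)=\sigma(y^k)\varphi(a)$ and extend by $\K$-linearity using $\L=\K[y]$, then set $c:=\varphi(1)$. Your route is more elementary: it needs only that the trace form is non-degenerate and that $y$ generates $\L$ over $\K$, both of which are already on the table, whereas the paper's argument appeals to a structure theorem about the $\L$-module of $\K$-endomorphisms. The trade-off is that the paper's argument makes it transparent \emph{why} only the $\sigma$-component can survive (the equation $y\tau(y)\lambda_\tau=\lambda_\tau$ forces $\lambda_\tau=0$ unless $\tau(y)=y^{-1}$), while yours derives the same conclusion by direct computation. Your closing remark that the symmetry of $B$ is not used here, but only later to force $c\in\K'$, is also accurate and matches the way the lemma is applied in the paper.
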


\begin{proof}
Since $(a,b) \mapsto \Tr_{\L/\K}(ab)$ is a regular bilinear form on the $\K$-vector space $\L$, there is a unique
endomorphism $\varphi$ of the $\K$-vector space $\L$ such that $\forall (a,b) \in \L^2, \; B(a,b)=
\Tr_{\L/\K}(\varphi(a)b)$. Since $\K$ is a finite field, $\L$ is a Galois extension of $\K$. Therefore, we may decompose
$\varphi=\underset{\tau \in \Gal(\L/\K)}{\sum} \lambda_\tau.\tau$ for a unique family $(\lambda_\tau)$ of elements of $\L$.
However, $B(ya,yb)=B(a,b)$ for every $(a,b)\in \L^2$ and hence
the uniqueness of $\varphi$ shows that $\varphi(yz)y=\varphi(z)$ for every $z \in \L$. Since
$y\varphi(yz)=\underset{\tau \in \Gal(\L/\K)}{\sum} y\tau(y)\lambda_\tau.\tau(z)$ for every $z \in \L$, we deduce that
 $y\tau(y)\lambda_\tau=\lambda_\tau$ for every $\tau \in \Gal(\L/\K)$.
Let $\tau \in \Gal(\L/\K)$. If $\tau(y)=y^{-1}$, then $\tau=\sigma$ since $y$ generates $\L$ as a $\K$-algebra.
We deduce that $\lambda_\tau=0$ whenever $\tau \neq \sigma$. Therefore
$\varphi=\lambda_\sigma\, \sigma$, hence $c:=\lambda_\sigma$ has the required properties.
\end{proof}

\begin{claim}
The polar form $b_q$ of $q$ has the form $(X,Y) \mapsto \Tr_{\L/\K}\bigl(\sigma(X)^TAY\bigr)$ for some nonsingular
hermitian matrix $A \in \Mat_n(\L)$ (hermitian in the sense that $\sigma(A)^T=A$).
\end{claim}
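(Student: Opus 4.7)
The plan is to realize $b_q$ as the trace of an $\L$-sesquilinear form on $V=\L^n$, by reducing the problem entry-by-entry to Lemma \ref{reprbilin}. Let $(e_1,\dots,e_n)$ denote the canonical $\L$-basis of $V$. For every pair $(i,j) \in \lcro 1,n\rcro^2$ I introduce the $\K$-bilinear form
$$B_{i,j} : \L \times \L \to \K, \quad (a,b) \mapsto b_q(a\,e_i,b\,e_j).$$
Since multiplication by $y$ on $V$ is exactly the map $u$, and since $u$ preserves the polar form $b_q$ of the $u$-orthogonal form $q$, each $B_{i,j}$ satisfies $B_{i,j}(ya,yb)=B_{i,j}(a,b)$.

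A quick look at the proof of Lemma \ref{reprbilin} shows that the symmetry of $B$ is never actually used: the argument relies only on the non-degeneracy of the trace pairing on $\L$ and on the fact that $\sigma$ is the unique element of $\Gal(\L/\K)$ with $\tau(y)=y^{-1}$. Applying this strengthened version to each $B_{i,j}$ produces a unique $c_{i,j}\in \L$ with $B_{i,j}(a,b)=\Tr_{\L/\K}(c_{i,j}\,\sigma(a)\,b)$. Setting $A:=(c_{i,j}) \in \Mat_n(\L)$ and expanding $b_q(X,Y)=\sum_{i,j} B_{i,j}(x_i,y_j)$ by bilinearity yields at once
$$b_q(X,Y)=\Tr_{\L/\K}\bigl(\sigma(X)^T A Y\bigr) \quad \text{for every } (X,Y)\in V^2.$$

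For the hermiticity $\sigma(A)^T=A$ I would exploit the symmetry of $b_q$. Expanding sums and relabeling indices, the scalar $\sigma(\sigma(Y)^T A X)\in \L$ rewrites as $\sigma(X)^T \sigma(A)^T Y$; combined with the $\Gal(\L/\K)$-invariance of $\Tr_{\L/\K}$, this turns $b_q(Y,X)$ into $\Tr_{\L/\K}(\sigma(X)^T \sigma(A)^T Y)$. Comparing with $b_q(X,Y)=\Tr_{\L/\K}(\sigma(X)^T A Y)$ for all $X,Y$ and invoking the uniqueness in Lemma \ref{reprbilin} entry-by-entry forces $A+\sigma(A)^T=0$, i.e.\ $\sigma(A)^T=A$ since $\car(\K)=2$. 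Nonsingularity is easier: if $AZ=0$ for some $Z \in \L^n$, then $b_q(X,Z)=0$ for every $X\in V$, and non-degeneracy of $b_q$ (which holds since $q$ is regular) gives $Z=0$.

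The only subtle point in this plan is the observation that the proof of Lemma \ref{reprbilin} extends verbatim to bilinear forms that are not necessarily symmetric; everything else amounts to routine trace-form bookkeeping and manipulations of hermitian-type matrices.
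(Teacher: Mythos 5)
Your proposal follows the same entry-by-entry reduction to Lemma~\ref{reprbilin} that the paper uses, obtains the matrix $A$ identically, and derives hermiticity from the symmetry of $b_q$ by the same trace manipulation (the paper concludes with the auxiliary scalar $\lambda$ argument; you invoke the uniqueness in the lemma directly, which is equivalent). Your remark that the proof of Lemma~\ref{reprbilin} never actually uses the symmetry of $B$ is worth making explicit: the lemma as stated only covers symmetric forms, yet the paper applies it to the $b_{i,j}$ with $i\neq j$, which are generally not symmetric --- so the extension you point out is in fact required for the argument to be complete, and your note fills a small gap that the paper silently glosses over.
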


\begin{proof}
Denote by $(e_1,\dots,e_n)$ the canonical basis of the $\L$-vector space $\L^n$. For every $(i,j)\in \lcro 1,n\rcro^2$,
the map $b_{i,j} : (a,b) \mapsto b_q(a e_i,b e_j)$ satisfies the conditions of Lemma \ref{reprbilin}, so we may
find $z_{i,j} \in \L$ such that $b_{i,j}(a,b)=\Tr_{\L/\K}(z_{i,j}\sigma(a)b)$ for every $(a,b)\in \L^2$.
Set then $A:=(z_{i,j})_{1 \leq i,j \leq n} \in \Mat_n(\L)$ and notice that
$$\forall (X,Y)\in (\L^n)^2, \; b_q(X,Y)=\Tr_{\L/\K}(\sigma(X)^T A Y).$$
Since $b_q$ is symmetric, it follows that
$$\Tr_{\L/\K}(\sigma(X)^T A Y)=\Tr_{\L/\K}(\sigma(Y)^T A X)=\Tr_{\L/\K}(\sigma(\sigma(Y)^T A X)^T)
=\Tr_{\L/\K}(\sigma(X)^T \sigma(A)^T Y)$$
for every $(X,Y) \in (\L^n)^2$.
Let $(X,Y)\in (\L^n)^2$ and $\lambda \in \L$. Applying the previous identity to $(X,\lambda Y)$
yields $\Tr_{\L/\K}\bigl(\lambda(\sigma(X)^T A Y-\sigma(X)^T \sigma(A)^T Y)\bigr)=0$.
It follows that $\sigma(X)^T A Y-\sigma(X)^T \sigma(A)^T Y=0$. Therefore $\sigma(A)^T=A$.
Finally, since $b_q$ is regular, we find that $A$ is nonsingular.
\end{proof}

The relations $\forall x \in V, \; q(u(x)-x)=b_q(x,u(x))$ and the fact that $u-\id$ is an automorphism of $V$
show that $q$ is the unique quadratic form on $V$ with polar form $b_q$ such that $u \in \text{O}(q)$.
Since the map $X \mapsto \Tr_{\K'/\K}\bigl(\sigma(X)^TAX\bigr)$ qualifies, it equals $q$.

Notice that the Gaussian reduction of hermitian forms still holds in characteristic $2$. Since $\K$ is perfect,
we deduce that $q$ is equivalent to the form $(x_1,\dots,x_n) \mapsto
\Tr_{\K'/\K}\bigl(x_1\sigma(x_1)+\cdots+x_n\sigma(x_n)\bigr)$, which is itself equivalent to the orthogonal sum of
$n$ copies of the form $x \mapsto \Tr_{\K'/\K}(x\sigma(x))$ on the $\K$-vector space $\L$.
In order to conclude, we prove the following:

\begin{claim}\label{lastclaim}
The quadratic form $\varphi : x \mapsto \Tr_{\K'/\K}(x\sigma(x))$ on the $\K$-vector space $\L$ is non-hyperbolic.
\end{claim}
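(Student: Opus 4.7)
The plan is to compute the Arf invariant $\Delta(\varphi) \in \K/\calP(\K)$ explicitly and show it is nonzero; since $\K$ is finite, this forces $\varphi$ to be non-hyperbolic.

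Because $\L/\K'$ is a separable quadratic extension in characteristic $2$, Artin--Schreier theory supplies $\alpha \in \L$ with $\L = \K'(\alpha)$ and $\alpha^2 + \alpha = a$ for some $a \in \K'\setminus\calP(\K')$; the nontrivial $\K'$-automorphism $\sigma$ then satisfies $\sigma(\alpha) = \alpha + 1$. Writing $\L = \K' \oplus \alpha \K'$ as a $\K'$-vector space, one obtains $\varphi(u+\alpha v) = \Tr_{\K'/\K}(u^2+uv+av^2)$ for $u,v \in \K'$, and the polar form $b_\varphi(x,y)=\Tr_{\L/\K}(x\sigma(y))$ (using $\Tr_{\L/\K'}(z)=z+\sigma(z)$). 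I would then pick a $\K$-basis $(e_1,\dots,e_d)$ of $\K'$, with $d := [\K':\K]$, and let $(e_1^*,\dots,e_d^*)$ be its dual basis with respect to the non-degenerate form $(x,y)\mapsto\Tr_{\K'/\K}(xy)$. Using $\Tr_{\L/\K'}(\alpha)=1$ and $\alpha\sigma(\alpha)=a$, a direct verification shows that $(e_1,\dots,e_d,\alpha e_1^*,\dots,\alpha e_d^*)$ is a symplectic basis for $b_\varphi$, and that in this basis
$$\varphi = \sum_{i=1}^d \Tr_{\K'/\K}(e_i^2)\,x_i^2 + \sum_{i=1}^d x_i y_i + \sum_{i=1}^d \Tr_{\K'/\K}\bigl(a (e_i^*)^2\bigr)\,y_i^2,$$
so $\Delta(\varphi)$ is represented by $\sum_{i=1}^d \Tr_{\K'/\K}(e_i^2)\,\Tr_{\K'/\K}\bigl(a(e_i^*)^2\bigr)$ modulo $\calP(\K)$.

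The crucial simplification is to expand each trace as a sum over $\tau\in\Gal(\K'/\K)$ and to invoke the Galois identity $\sum_{i=1}^d \tau(e_i)\,\rho(e_i^*) = \delta_{\tau,\rho}$ for all $\tau,\rho \in \Gal(\K'/\K)$, which follows from $M^TN = I_d$ where $M = (\tau(e_i))_{\tau,i}$ and $N = (\tau(e_i^*))_{\tau,i}$ (this is just the defining relation $\Tr(e_ie_j^*)=\delta_{ij}$ of the dual basis). Combined with $\Tr(e_i)^2 = \Tr(e_i^2)$ in characteristic $2$, squaring each resulting inner sum collapses the double sum to $\Delta(\varphi) \equiv \Tr_{\K'/\K}(a) \pmod{\calP(\K)}$.

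It remains to check $\Tr_{\K'/\K}(a) \notin \calP(\K)$. Since $\Tr(z^2+z) = \Tr(z)^2+\Tr(z)$ in characteristic $2$, the trace descends to a homomorphism $\bar\Tr \colon \K'/\calP(\K') \to \K/\calP(\K)$ between groups of order $2$, and this map is surjective: $\Tr_{\K'/\K}\colon \K'\to\K$ is onto while $\calP(\K)\neq\K$, so any $x\in\K'$ with $\Tr(x)\in\K\setminus\calP(\K)$ witnesses $\bar\Tr\ne 0$. Hence $\bar\Tr$ is an isomorphism, and $a\notin\calP(\K')$ forces $\Tr_{\K'/\K}(a)\notin\calP(\K)$, giving $\Delta(\varphi)\ne 0$. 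The main obstacle is the Galois identity that collapses the double sum down to $\Tr_{\K'/\K}(a)$; once that is in hand, the rest is direct but careful bookkeeping.
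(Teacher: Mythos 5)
Your proof is correct, and both arguments ultimately come down to establishing that $\Delta(\varphi)=[\Tr_{\K'/\K}(a)]$ for some $a\in\K'\setminus\calP(\K')$ and then invoking the same lemma that $\Tr_{\K'/\K}$ induces an isomorphism $\K'/\calP(\K')\to\K/\calP(\K)$. The intermediate computation, however, is genuinely different. You take an Artin--Schreier generator $\alpha$ of $\L/\K'$ with $\alpha^2+\alpha=a$, pick a $\K$-basis $(e_i)$ of $\K'$ together with its trace-dual basis $(e_i^*)$, observe that $(e_1,\dots,e_d,\alpha e_1^*,\dots,\alpha e_d^*)$ is already a symplectic basis for $b_\varphi$, and compute the Arf invariant as $\sum_i \Tr_{\K'/\K}(e_i^2)\Tr_{\K'/\K}(a(e_i^*)^2)$; you then collapse this double sum over $\Gal(\K'/\K)$ using the orthogonality relation $\sum_i\tau(e_i)\rho(e_i^*)=\delta_{\tau,\rho}$ (equivalent to $MN^T=I$ where $M^TN=I$ encodes the dual-basis property) together with the characteristic-$2$ identity $\sum_i c_i^2=(\sum_i c_i)^2$, to get $\Tr_{\K'/\K}(a)$. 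The paper instead first identifies the $\K'$-quadratic norm form $x\mapsto x\sigma(x)$ on $\L$ with $[a,1]_{\K'}$, writes the transferred $\K$-form as the block matrix $\begin{bmatrix}P_a&P\\0&P\end{bmatrix}$ with $P=(\Tr_{\K'/\K}(e_ie_j))$ and $P_a=(\Tr_{\K'/\K}(ae_ie_j))$ relative to the \emph{non}-symplectic product basis, then passes to a symplectic basis by the block congruence with $\mathrm{diag}(I_m,P^{-1})$ to read off $\Delta(\varphi)=[\tr(P_aP^{-1})]$, and finally uses $P^{-1}P_a=\Mat_\bfB(x\mapsto ax)$ to recognize this trace as $\Tr_{\K'/\K}(a)$. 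Your route avoids the congruence step by building a symplectic basis directly, at the cost of a Galois-theoretic manipulation to collapse the double sum; the paper's route keeps the bookkeeping at the matrix level and gets the trace formula almost for free from the multiplication-map interpretation of $P^{-1}P_a$. Both are clean; yours is somewhat more conceptual, the paper's somewhat more concretely computational.
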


Choose $a \in \K' \setminus \calP(\K')$. Notice that $\varphi : x \mapsto x\sigma(x)$ is a regular non-isotropic
quadratic form on the $2$-dimensional $\K'$-vector space $\L$, hence its Arf invariant is $a$.
This shows that this form is equivalent to $[a,1]_{\K'}$ (both have the same Arf invariant).
Let $\bfB:=(e_1,\dots,e_m)$ be a basis of the $\K$-vector space $\K'$ and set
$P:=(\Tr_{\K'/\K}(e_ie_j))_{1 \leq i,j \leq m}$ and $P_a:=(\Tr_{\K'/\K}(ae_ie_j))_{1 \leq i,j \leq m}$.
Obviously, the matrix $\begin{bmatrix}
P_a & P \\
0 & P
\end{bmatrix}$ represents $\varphi$ in some basis of the $\K$-vector space $\L$.
Multiplying it by $C:=\begin{bmatrix}
I_m & 0 \\
0 & P^{-1}
\end{bmatrix}$ on the left and by $C^T$ on the right, we find that
$\begin{bmatrix}
P_a & I_m \\
0 & P^{-1}
\end{bmatrix}$ represents $\varphi$, hence $\Delta(\varphi)=\bigl[\tr(P_aP^{-1})]$.
However $\tr(P_aP^{-1})=\tr(P^{-1}P_a)=\Tr_{\K'/\K}(a)$ since $P_a=P \times \Mat_{\bfB}(x \mapsto ax)$.
In order to conclude the proof of Claim \ref{lastclaim}, it suffices
to establish the following final lemma:

\begin{lemme}
Let $\K-\L$ be an extension of finite fields of characteristic $2$.
Then $\Tr_{\L/\K}$ induces a group isomorphism from $\L/\calP(\L)$ to $\K/\calP(\K)$.
\end{lemme}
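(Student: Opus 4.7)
The plan is to observe that $\Tr_{\L/\K}$ intertwines the Artin--Schreier maps on $\L$ and on $\K$, and then to upgrade surjectivity of the trace on the field level to surjectivity modulo $\calP$; a count of cardinalities then forces bijectivity.

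First I would check the intertwining relation. Writing $|\K|=2^m$, the Galois group $\Gal(\L/\K)$ is generated by the Frobenius $\phi:x\mapsto x^{2^m}$, so
\[
\Tr_{\L/\K}(x^2)=\sum_{i=0}^{n-1}(x^{2^{mi}})^2=\Bigl(\sum_{i=0}^{n-1}x^{2^{mi}}\Bigr)^2=\Tr_{\L/\K}(x)^2,
\]
which combined with $\K$-linearity gives $\Tr_{\L/\K}\circ \calP_\L=\calP_\K\circ \Tr_{\L/\K}$. In particular $\Tr_{\L/\K}(\calP(\L))\subset \calP(\K)$, so $\Tr_{\L/\K}$ descends to a group homomorphism
\[
\overline{\Tr}:\L/\calP(\L)\longrightarrow \K/\calP(\K).
\]

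Next I would invoke the fact, recalled at the start of Section \ref{car2quadSec}, that for finite fields of characteristic $2$ both $\calP(\K)$ and $\calP(\L)$ are subgroups of index $2$. Thus the source and target of $\overline{\Tr}$ are both cyclic of order $2$, so it is enough to establish surjectivity. This follows from the standard fact that for a finite (hence separable) extension of finite fields, the trace $\Tr_{\L/\K}:\L\to\K$ is surjective (the trace bilinear form is non-degenerate); composing with the projection $\K\to\K/\calP(\K)$ yields a surjection $\L\to\K/\calP(\K)$ whose kernel contains $\calP(\L)$, so $\overline{\Tr}$ is surjective.

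Having surjectivity between two groups of order $2$, the map $\overline{\Tr}$ is forced to be an isomorphism, which is exactly the claim. There is no real obstacle here; the only step that requires care is checking that the trace commutes with squaring, and this is immediate from the explicit description of $\Tr_{\L/\K}$ as the sum of Galois conjugates.
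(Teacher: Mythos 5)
Your proof is correct and follows essentially the same route as the paper's: both establish the intertwining identity $\Tr_{\L/\K}\circ\calP_\L=\calP_\K\circ\Tr_{\L/\K}$ (the paper via the Galois-sum expression for the trace, you via the explicit Frobenius powers, which is the same computation), deduce the induced homomorphism, and conclude by combining surjectivity of the trace with the fact that both quotient groups have order $2$.
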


\begin{proof}
For every $x \in \L$, notice that
$$\Tr_{\L/\K}(x^2+x)=\sum_{\sigma \in \Gal(\L/\K)}\sigma(x^2+x)=\sum_{\sigma \in \Gal(\L/\K)} \sigma(x)^2
+\sum_{\sigma \in \Gal(\L/\K)} \sigma(x)=\calP(\Tr_{\L/\K}(x)),$$
therefore $\Tr_{\L/\K}$ maps $\calP(\L)$ into $\calP(\K)$. It follows that
$\Tr_{\L/\K}$ induces a group homomorphism from $\L/\calP(\L)$ to $\K/\calP(\K)$.
This homomorphism is onto since $\Tr_{\L/\K}$ maps $\L$ onto $\K$, being a non-zero $\K$-linear form on $\L$.
Since both groups $\L/\calP(\L)$ and $\K/\calP(\K)$ have order $2$, the claimed result follows.
\end{proof}

This finishes the proof of Proposition \ref{onedivisorcar2} and Theorem \ref{subtilquadformscar2}.

\end{document}